\tikzset{>=to}
\setlist[itemize]{leftmargin=*}                  
\setlist[enumerate]{leftmargin=*,                
label=\textup{(\roman*)}}    
\DeclareSymbolFont{largesymbols}{OMX}{zplm}{m}{n} 
\let\originalleft\left     
\let\originalright\right
\renewcommand{\left}{\mathopen{}\mathclose\bgroup\originalleft}
\renewcommand{\right}{\aftergroup\egroup\originalright}
\newcommand{\fld}[1]{\mathbb{#1}} 
\newcommand{\ZZ}{\fld{Z}}
\newcommand{\NN}{\fld{N}}
\newcommand{\RR}{\fld{R}}
\newcommand{\CC}{\fld{C}}
\newcommand{\alg}[1]{\mathcal{#1}}  
\DeclareMathOperator{\im}{im}
\DeclareMathOperator{\proj}{proj}
\theoremstyle{plain}
\newtheorem{theorem}{Theorem}[section]
\newtheorem{lemma}[theorem]{Lemma}
\newtheorem{proposition}[theorem]{Proposition}
\newtheorem{definition}[theorem]{Definition}
\newtheorem{remark}[theorem]{Remark}
\newcommand{\bili}[2]{\left\langle #1,#2\right\rangle}
\newcommand{\comm}[2]{\left[ #1,\, #2 \right]} 
\newcommand{\acomm}[2]{\left\lbrace #1,\, #2 \right\rbrace} 
\newcommand{\Lie}[1]{\mathfrak{#1}} 
\newcommand{\osp}{\Lie{osp}}
\newcommand{\normxx}[1]{|x|^{#1}}
\newcommand{\dN}{d}
\newcommand{\Clif}{Cl}
\newcommand{\eps}{\varepsilon} 
\newcommand{\dcover}[1]{\widetilde{#1}}
\newcommand{\dsig}[1]{\dcover{\sigma}_{#1}}
\newcommand{\alphu}{\underline{\alpha}}
\newcommand{\CK}{\mathbf{CK}}
\newcommand{\Poly}{\mathcal{P}}
\newcommand{\Mono}{\mathcal{M}}
\newcommand{\Harmo}{\mathcal{H}}
\DeclareMathOperator{\Pin}{Pin}
\newcommand{\KelvI}{\mathsf{I}_{\kappa}}
\newcommand{\KelvK}{\mathsf{K}_{\kappa}}
\newcommand{\sym}[1]{O_{#1}} 
\newcommand{\Dun}[1]{\mathrm{D}_{#1}}
\newcommand{\DDop}{\underline{\Dun{}}} 
\newcommand{\xun}{\underline{x}} 
\newcommand{\xsq}{|x|^2} 
\newcommand{\Euler}{\mathbb{E}}
\newcommand{\Lapl}{\Delta}
\newcommand{\DLapl}{\Delta_{\kappa}}
\newcommand{\RCA}{\alg{A}_{\kappa}} 
\newcommand{\mm}{\mathfrak{m}}
\newcommand{\zz}{\mathfrak{z}}
\newcommand{\zzpoly}{\mathsf{Z}}
\newcommand{\fatj}{\mathbf{j}}
\newcommand{\base}{\mathcal{B}}
\newcommand{\zzu}{\underline{\zz}}
\newcommand{\xunM}{\xun_{[M]}} 
\newcommand{\DDopM}{\DDop_{[M]}} 
\newcommand{\gammaM}{\gamma_{[M]}}
\newcommand{\EulerM}{\Euler_{[M]}}
\newcommand{\HM}{H_{[M]}}
\newcommand{\zM}[1]{\zz_{#1,[M]}}
\newcommand{\zA}[2]{\zz_{#1,[#2]}}
\newcommand{\zzA}[1]{\zA{#1}{#1}}
\newcommand{\xunA}[1]{\xun_{[#1]}} 
\newcommand{\DDopA}[1]{\DDop_{[#1]}} 
\newcommand{\EulerA}[1]{\Euler_{[#1]}}
\title{Generalised symmetries and  bases for Dunkl monogenics}
\author[H De Bie]{Hendrik De Bie}
\address[Hendrik De Bie]{ Clifford Research Group,
	Department of Electronics and Information Systems,
	Faculty of Engineering and Architecture,
	Ghent University,
	Krijgslaan 281--S8, 9000 Gent, Belgium}
\email{Hendrik.DeBie@UGent.be}
\author[A Langlois-R\'emillard]{Alexis Langlois-R\'emillard}
\address[Alexis Langlois-R\'emillard]{ Department of Applied Mathematics, Computer Science and Statistics, Faculty of Sciences, Ghent University, Krijgslaan 281--S9, 9000 Gent, Belgium.
}
\email{Alexis.LangloisRemillard@UGent.be}
\author[R Oste]{Roy Oste}
\address[Roy Oste]{ Department of Applied Mathematics, Computer Science and Statistics, Faculty of Sciences, Ghent University, Krijgslaan 281--S9, 9000 Gent, Belgium.
}
\email{Roy.Oste@UGent.be}
\author[J Van der Jeugt]{Joris Van der Jeugt}
\address[Joris Van der Jeugt]{ Department of Applied Mathematics, Computer Science and Statistics, Faculty of Sciences, Ghent University, Krijgslaan 281--S9, 9000 Gent, Belgium.}
\email{Joris.VanderJeugt@UGent.be}
\date{June, 27, 2022}
\keywords{Dunkl--Dirac equation; Dunkl operator; symmetry algebra; generalised symmetries; total angular operator; polynomial monogenics}
\subjclass{20F55; 43A32; 30G35; 33C52; 33C55}
\begin{document}

\begin{abstract}
	We introduce a family of commuting generalised symmetries of the Dunkl--Dirac operator inspired by the Maxwell construction in harmonic analysis. As an application, we use these generalised symmetries to construct bases of the polynomial null-solutions of the Dunkl--Dirac operator. These polynomial spaces form representation spaces of the Dunkl--Dirac symmetry algebra. For the $\ZZ_2^\dN$ case, the results are compared with previous investigations.
\end{abstract}

\maketitle

\onehalfspacing

%
%
\section{Introduction} \label{sec:Intro}

Since their introduction by Dunkl in 1989~\cite{dunkl_differential-difference_1989}, the family of commutative differential-difference operators associated with a reflection group $W$, now known as Dunkl operators, have enjoyed a great deal of interest of mathematical nature and also for applications in physics. 
Due to their properties, it is possible to replace partial derivatives with Dunkl operators in classical differential equations and operators appearing in many physical systems. A great deal of work has been done in the study of the resulting differential operators, most notably on the Dunkl version of the Laplace operator and its harmonic functions. 

This work focuses on the kernel of the Dunkl version of the Dirac operator, which, like its classical analogue, is a square root of the Dunkl Laplacian. 
Polynomials in the kernel of the Dunkl--Dirac operator are called Dunkl monogenics and they form solutions of the Dunkl version of the homogeneous Dirac equation.

The study of the Dunkl--Dirac operator $\DDop$ and its kernel can take many ways. A recent fruitful path to its understanding resides in the consideration of the symmetry algebra linked to the $\osp(1|2)$ realisation generated by $\DDop$ and its dual symbol $\xun$~\cite{orsted_howe_2009}. This symmetry algebra consists of elements supercommuting with $\DDop$ and the Dunkl monogenics  form natural representation spaces. 

The representation theory of the symmetry algebra was studied for few specific reflection groups, namely: $W=\ZZ_2^\dN$~\cite{de_bie_z2n_2016}, where the link was made with the (higher-rank) Bannai--Ito algebra and for any reducible rank 3 reflection group  $W=D_{2m}\times\ZZ_2$~\cite{DBLROVdJ_2020}, where the finite-dimensional representations were constructed. It was also shown that the Dunkl--Dirac symmetry algebra can be considered as a specialisation of an abstract algebra~\cite{de_bie_algebra_2018}.

Symmetries play an important role in our study. We call $S$ a \emph{symmetry} of an operator $A$ if $\comm{S}{A} = SA - AS = 0$; a \emph{generalised symmetry} of  $A$ if $\comm{S}{A} = fA$ for a certain operator $f$, and a \emph{supersymmetry} if $S$ supercommutes with $A$. Finally, $S$ anticommutes with $A$ if $\acomm{S}{A}:= SA+AS = 0$.

The goal of this article is to introduce a class of generalised symmetries of the Dunkl--Dirac operator. Since these generalised symmetries preserve the kernel of $\DDop$, they can be used to construct natural bases for the spaces of monogenic polynomials.

The generalised symmetries are related to the Maxwell representation in harmonic analysis~\cite[p.69]{muller_analysis_1998}, which was translated to Dunkl harmonic analysis by Xu~\cite{xu_harmonic_2000} and to Dunkl--Clifford analysis in~\cite{fei_fueters_2009,yacoub}. Similar operators  were also considered in the study of the conformal symmetries of the super Dirac operator~\cite{coulembier_conformal_2015} and on the radially deformed Dirac operator~\cite{de_bie_new_2017}. The last two were presented via Kelvin inverses; the generalised symmetries defined here are valid also in a more general context of~\cite{de_bie_algebra_2018}, but admit a presentation using a Clifford--Kelvin type transform when specialised to the Dunkl setting.

As an application, we use these generalised symmetries to give a new interpretation of the basis previously obtained by means of a Dunkl version of the Cauchy--Kovalevskaya (CK) extension Theorem in~\cite{de_bie_z2n_2016}.

We now go through the structure of the paper and highlight the main results. In Section~\ref{sec:Dunkl}, we introduce the preliminaries on Dunkl operators and rewrite some results of Xu~\cite{xu_harmonic_2000} on Dunkl harmonics in terms of generalised symmetries of the Dunkl--Laplace operator. Section~\ref{sec:Dunklmono} goes from the Dunkl harmonics to the Dunkl monogenics. We introduce a class of operators and prove their main properties. They are generalised symmetries of the Dunkl--Dirac operator (Proposition~\ref{prop:gensym}), they commute with each other (Proposition~\ref{prop:commZ}), they can be written by means of a Dunkl--Clifford--Kelvin transform (Proposition~\ref{prop:zkelv}) and they are related with a monogenic projection operator (Propositions~\ref{prop:zproj} and~\ref{prop:zandproj}). A basis of the monogenic representation for any reflection group is then constructed in Section~\ref{sec:Monobase} (Theorem~\ref{thm:maxwellbasis}). Finally, we study in Section~\ref{sec:ExAb} the case of the group $W=\ZZ_2^\dN$ and retrieve  a known basis (Proposition~\ref{prop:maxwellandck}).

\section{Dunkl operators} \label{sec:Dunkl}

\subsection{Preliminaries}

Let $W$ be a reflection group acting on $\RR^\dN$ and $\bili{-}{-}$ be the canonical bilinear form of $\RR^\dN$. Let $R \subset \RR^\dN$ denote the root system linked to $W$ and  $R^+$ is a fixed set of positive roots. The reflection $\sigma_{\alpha}: \RR^\dN \to \RR^\dN$ associated with a root $\alpha = (\alpha_1,\dots,\alpha_\dN)\in R^+$ is 
\begin{equation}\label{eq:actionref}
	\sigma_{\alpha}(y) := y - 2\frac{\bili{y}{\alpha}}{\bili{\alpha}{\alpha}}\alpha.
\end{equation}
The group $W$ is generated as a Coxeter group by the reflections $\sigma_{\alpha}$ and its elements act on functions of $x\in \RR^\dN$ by
\begin{equation}
	\omega f(x) = f(\omega^{-1}x), \quad \omega \in W.
\end{equation}

From now on, we will assume that the roots of $R$ are normalised. We consider a $W$-invariant function $\kappa:R\to \CC$.
We will usually assume $\kappa$ to be a positive real function to avoid problems possibly resulting from specific negative values (for example, the definition of the projection operator~\eqref{eq:xu2point5} could potentially have a division by zero for negative $\kappa$). 
Let $\xi_1,\dots, \xi_\dN$ denote the canonical basis of $\RR^\dN$. The Dunkl operator associated with $\xi_j$ is then defined by 
\begin{equation}
	\Dun{j}f(x) = \partial_{x_j}f(x) + \sum_{\alpha\in R^+} \kappa(\alpha) \frac{f(x)-\sigma_{\alpha}f(x)}{\bili{\alpha}{x}} \alpha_j.
\end{equation}
A major, and non-trivial, property of these operators is that they commute~\cite{dunkl_differential-difference_1989}: $\comm{\Dun{j}}{\Dun{k}} = 0$.

With normalised roots, the commutation relations between the Dunkl operators and the variables are given by
\begin{equation}\label{eq:commDx}
	\comm{\Dun{i}}{x_j}= \delta_{ij} + 2\sum_{\alpha\in R^+} \kappa(\alpha) \alpha_i\alpha_j \sigma_{\alpha}, \quad \text{where $\delta_{ij} =1$ if $i=j$, and $0$ otherwise,}
\end{equation}
and one readily sees that $\comm{\Dun{i}}{x_j} = \comm{\Dun{j}}{x_i}$.

The algebra generated by  $x_1,\dots,x_\dN$, $\Dun{1},\dots,\Dun{\dN}$ and the group algebra $\CC W$
is a realisation of the faithful polynomial representation of a rational Cherednik algebra~\cite{rouquier2005representations}. We denote it by $\RCA$, with the index $\kappa$ indicating the Dunkl realisation.

\begin{remark}
	This is an example of the algebra $\alg{A}$ considered in~\cite[Ex.~4.2]{de_bie_algebra_2018}.
\end{remark}

For a multi-index $\beta = (\beta_1,\dotsc,\beta_\dN) \in \NN^\dN$, its 1-norm is $|\beta|_1 := \beta_1 + \dotsb + \beta_\dN$ and we denote the monomial $x^\beta := x_1^{\beta_1} \dotsm x_\dN^{\beta_\dN}$ and also $\Dun{}^\beta := \Dun{1}^{\beta_1} \dotsm \Dun{\dN}^{\beta_\dN}$. 

The Dunkl--Laplace operator $\DLapl$,  the squared norm and the norm are respectively given by 
\begin{align}
	\DLapl &:= \sum_{j=1}^\dN \Dun{j}^2 , & \xsq &:=  \sum_{j=1}^\dN x_j^2, &  |x| &:= \sqrt{\sum_{j=1}^\dN x_j^2},
\end{align}
which are all invariant under the action of $W$. A consequence of this invariance and application of the Dunkl--Leibniz rule for radial functions is the property
\begin{equation}\label{eq:dunklleibniz}
	\comm{\Dun{j}}{\normxx{a}} = a \normxx{a-2} x_j, \quad \text{for } a \in \RR.
\end{equation}

The classical Euler operator $\Euler$, which measures the degree of a homogeneous polynomial, is also $W$-invariant.
A direct computation using~\eqref{eq:commDx} (see for example~\cite{de_bie_algebra_2018}) shows that 
$\DLapl$, $\xsq$ and 
\begin{align}\label{eq:HinDunkl}
	H &:= \frac{1}{2}\sum_{j=1}^\dN \acomm{\Dun{j}}{x_j} = \Euler + \dN/2 + \gamma,&  \text{where}&	&	\Euler&:= \sum_{j=1}^\dN x_j\partial_{x_j}, & \gamma&:= \sum_{\alpha\in R^+} \kappa(\alpha),
\end{align}
form a $\Lie{sl}_2$-triple in the algebra $\RCA$ as the following relations hold:
\begin{align}\label{eq:commsl2}
	\comm{H}{\xsq} &= 2\xsq, & \comm{H}{\DLapl} &= -2\DLapl, & \comm{\DLapl}{\xsq} &=  4H. 
\end{align}
Moreover, we also have the relations
\begin{align}\label{eq:HinDunkl2}
	\comm{H}{x_j} &= x_j, & \comm{H}{\Dun{j}} &= -\Dun{j}, & \comm{\DLapl}{x_j} &= 2 \Dun{j}, & \comm{\xsq}{\Dun{j}} &= -2x_j .
\end{align}

\subsection{Dunkl harmonics}
We will denote by $\Poly = \Poly(\RR^\dN)$ the space of complex-valued polynomials on $\RR^\dN$ and by $\Poly_n = \Poly_n(\RR^\dN)$ the space of homogeneous polynomials of degree $n$. 
The space $\Harmo$ of Dunkl harmonic polynomials consists of all polynomials in the kernel of the Dunkl--Laplace operator $\DLapl$. We further denote $\Harmo_n = \Harmo \cap \Poly_n$.

In a classical construction of harmonic analysis, the Maxwell representation~\cite{muller_analysis_1998} allows one to construct bases of polynomial harmonics by means of the Kelvin transformation.
This was extended by Xu to Dunkl harmonics~\cite{xu_harmonic_2000}. For $\beta = (\beta_1, \dots , \beta_\dN)\in \NN^\dN$, Xu considered the harmonic polynomials $H_{\beta}(x)$ given by (compare with~\cite[Def.~2.2]{xu_harmonic_2000})
\begin{align}\label{eq:Xuharmo}
	H_{\beta}(x) &:= \KelvK\Dun{1}^{\beta_1}\dots \Dun{\dN}^{\beta_\dN}\KelvK(1), 
\end{align}
where a Dunkl version of the Kelvin transform is used
\begin{equation}\label{eq:KelvK}
	\KelvK f (x) :=  \normxx{-(2\gamma+\dN-2)} f\left(\frac{x}{\xsq}\right), \quad \KelvK\KelvK f (x) = f(x).
\end{equation}
It can be verified that they satisfy  $\Lapl_{\kappa} H_{\beta}(x) = 0$.

\subsection{Generalised symmetries}

It is possible to express Xu's construction by means of a generalised symmetry of the Dunkl--Laplace operator. The definition of this operator is inspired by~\cite[Thm~2.3]{xu_harmonic_2000}. 
It is related to the adjoint of a Dunkl operator, see~\cite[Thm~2.1 and Prop.~2.3]{dunkl_differential-difference_1989}.
\begin{definition}
	We define 	$\mm_j\in \RCA$ to be
	\begin{equation}\label{eq:maxwellopH}
		\mm_j = 2x_j (H - 1) - \xsq \Dun{j}.
	\end{equation}
		For a multi-index $\beta=(\beta_1,\dots , \beta_\dN)\in \NN^\dN$, we write $\mm^{\beta} := \mm_1^{\beta_1}\dots \mm_{\dN}^{\beta_\dN}$.
\end{definition}

\begin{proposition}\label{prop:maxgensymH}
	The operator $\mm_j$ is a generalised symmetry of the Dunkl--Laplace operator:
	\begin{equation}\label{eq:maxgensymH}
		\comm{\DLapl}{\mm_j} = 4 x_j \DLapl.
	\end{equation}
\end{proposition}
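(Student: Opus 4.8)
The plan is to expand the commutator $\comm{\DLapl}{\mm_j}$ by linearity and the derivation property of $\comm{\DLapl}{-}$, then reduce everything to the structure relations \eqref{eq:commsl2}--\eqref{eq:HinDunkl2} together with the fact that the Dunkl operators commute (so $\comm{\DLapl}{\Dun{j}}=0$). Writing $\mm_j = 2x_j(H-1) - \xsq\Dun{j}$, I would first treat the two summands separately:
\begin{equation*}
	\comm{\DLapl}{\,2x_j(H-1)} = 2\comm{\DLapl}{x_j}(H-1) + 2x_j\comm{\DLapl}{H} = 4\Dun{j}(H-1) + 4x_j\DLapl,
\end{equation*}
using $\comm{\DLapl}{x_j}=2\Dun{j}$ and $\comm{\DLapl}{H}=2\DLapl$ (the latter from $\comm{H}{\DLapl}=-2\DLapl$).

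Next I would compute the second piece using $\comm{\DLapl}{\xsq}=4H$ and $\comm{\DLapl}{\Dun{j}}=0$:
\begin{equation*}
	\comm{\DLapl}{\xsq\Dun{j}} = \comm{\DLapl}{\xsq}\Dun{j} + \xsq\comm{\DLapl}{\Dun{j}} = 4H\Dun{j}.
\end{equation*}
Subtracting, $\comm{\DLapl}{\mm_j} = 4x_j\DLapl + 4\bigl(\Dun{j}(H-1) - H\Dun{j}\bigr)$, so the whole claim comes down to showing the parenthesised operator vanishes. For that I would invoke $\comm{H}{\Dun{j}}=-\Dun{j}$, i.e.\ $\Dun{j}H = H\Dun{j} + \Dun{j}$, which gives $\Dun{j}(H-1) - H\Dun{j} = \Dun{j}H - \Dun{j} - H\Dun{j} = 0$, and hence $\comm{\DLapl}{\mm_j} = 4x_j\DLapl$.

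There is no real obstacle here: the argument is a short bookkeeping computation inside the algebra $\RCA$, and the only points requiring a moment's care are (i) remembering that $\comm{\DLapl}{\Dun{j}}=0$ is exactly the commutativity of the Dunkl operators, and (ii) tracking the constant shift $H-1$ versus $H$ so that the leftover $\Dun{j}H$ and $H\Dun{j}$ terms cancel cleanly. Everything else follows formally from \eqref{eq:commsl2} and \eqref{eq:HinDunkl2}.
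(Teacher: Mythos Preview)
Your proof is correct and is essentially the same computation as the paper's, organised via the Leibniz rule for $\comm{\DLapl}{-}$ rather than by expanding $\DLapl\,\mm_j$ directly and then regrouping to identify $\mm_j\DLapl$. Both arguments invoke exactly the relations~\eqref{eq:commsl2}--\eqref{eq:HinDunkl2} and the commutativity of the Dunkl operators, and the final cancellation $\Dun{j}(H-1)-H\Dun{j}=0$ matches the paper's use of $4\comm{\Dun{j}}{H}-4\Dun{j}=0$.
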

\begin{proof}
	It follows from the relations~\eqref{eq:commsl2} and~\eqref{eq:HinDunkl2}
	\begin{align*}
		\DLapl \mm_j &= \DLapl (2x_j H - 2 x_j - \xsq\Dun{j})\\
		&= 2 x_j \DLapl H + 4 \Dun{j}H -  2 x_j \DLapl - 4 \Dun{j} - \xsq \Dun{j}\DLapl - 4 H\Dun{j}\\
		&= (2x_jH - 2 x_j - \xsq\Dun{j})\DLapl + 4 x_j \DLapl + 4 \comm{\Dun{j}}{H} - 4 \Dun{j}\\
		&= \mm_j \DLapl + 4 x_j \DLapl.\qedhere
	\end{align*}
\end{proof}
The next result gives the correspondence $\mm^{\beta}(1) = (-1)^n H_{\beta}(x)$ for $\beta\in \NN^d$ with $|\beta|_1=n$.
\begin{proposition}\label{prop:maxDIH}
	For $\beta\in \NN^\dN$ with $|\beta|_1 = n$,
	when acting on $\Poly$,
	\begin{equation}
		\mm_j = - \KelvK \Dun{j}\KelvK, 	\quad \text{and} \quad \mm^\beta = (-1)^n\KelvK \Dun{}^{\beta}\KelvK.
	\end{equation}
\end{proposition}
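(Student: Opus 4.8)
The plan is to verify the single-operator identity $\mm_j = -\KelvK \Dun{j}\KelvK$ on $\Poly$ by a direct computation, using the explicit form \eqref{eq:KelvK} of $\KelvK$ together with the Dunkl--Leibniz rule \eqref{eq:dunklleibniz}; the multi-index statement then follows by induction, using $\KelvK\KelvK = \mathrm{id}$ to collapse the intermediate factors. First I would fix a homogeneous $p \in \Poly_m$ and compute $\KelvK \Dun{j} \KelvK p$. Writing $\KelvK p(x) = \normxx{-(2\gamma+\dN-2)} p(x/\xsq)$, I apply $\Dun{j}$ using the product rule for the Dunkl operator: one term hits the radial prefactor via \eqref{eq:dunklleibniz}, giving a contribution proportional to $x_j \normxx{-(2\gamma+\dN)} p(x/\xsq)$, and the other term hits $p(x/\xsq)$. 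For the latter it is cleanest to note that $\KelvK$ intertwines $\Dun{j}$ with $\mm_j$ up to the prefactor, i.e. to use the known behaviour of Dunkl operators under inversion $x \mapsto x/\xsq$; concretely, $\Dun{j}\bigl(f(x/\xsq)\bigr)$ expands into $\xsq^{-1}$ times a combination of $(\Dun{k} f)(x/\xsq)$ and $x_k$-multiples, which is exactly the content underlying \cite[Thm~2.3]{xu_harmonic_2000}. Collecting terms and then applying the outer $\KelvK$ (another prefactor $\normxx{-(2\gamma+\dN-2)}$ and another inversion) should reproduce $2x_j(H-1)p - \xsq\Dun{j}p = \mm_j p$, once the degree $m$ is tracked through $H = \Euler + \dN/2 + \gamma$.

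Alternatively — and this is the route I would actually write up, since it avoids the messy inversion formula — I would \emph{derive} $\mm_j = -\KelvK\Dun{j}\KelvK$ from Proposition~\ref{prop:maxgensymH} together with the way $\KelvK$ conjugates the $\Lie{sl}_2$-triple. The operator $\KelvK$ satisfies $\KelvK \xsq \KelvK = \DLapl^{-1}$-type relations on homogeneous components; more usefully, conjugation by $\KelvK$ swaps $\DLapl \leftrightarrow \xsq$ up to scalars and sends $H \mapsto -H + \text{const}$ on $\Poly_n$ (since it sends degree $n$ to degree $-(2\gamma+\dN-2)-n$). So $-\KelvK\Dun{j}\KelvK$, being built from the image of $\Dun{j}$, must be a first-order-in-$x$ operator lowering nothing, landing in the span of $x_j H$, $x_j$, and $\xsq\Dun{j}$ for degree reasons; matching it against $\mm_j$ on the single vector $1$ and one more generator, or matching the commutator $\comm{\DLapl}{-\KelvK\Dun{j}\KelvK} = -\KelvK\comm{\xsq}{\Dun{j}}\KelvK \cdot(\text{scalars})$ against \eqref{eq:maxgensymH}, pins down all coefficients.

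For the multi-index identity, I would argue by induction on $|\beta|_1$: given $\mm^{\beta'} = (-1)^{n'}\KelvK\Dun{}^{\beta'}\KelvK$ on $\Poly$, multiply on the left by $\mm_j = -\KelvK\Dun{j}\KelvK$ to get $\mm_j\mm^{\beta'} = (-1)^{n'+1}\KelvK\Dun{j}\KelvK\KelvK\Dun{}^{\beta'}\KelvK = (-1)^{n'+1}\KelvK\Dun{j}\Dun{}^{\beta'}\KelvK$, where the middle $\KelvK\KelvK$ cancels by \eqref{eq:KelvK}; since the $\Dun{k}$ commute this is $(-1)^n\KelvK\Dun{}^{\beta}\KelvK$ with $n = n'+1$. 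The only subtlety is that $\KelvK\KelvK = \mathrm{id}$ must be used as an identity of operators on all of $\Poly$ (not merely on a fixed homogeneous piece), which is exactly how it is stated in \eqref{eq:KelvK}.

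The main obstacle will be the first step: carefully pushing $\Dun{j}$ through the inversion $x \mapsto x/\xsq$ inside $\KelvK$, i.e. establishing the intermediate Kelvin-conjugation formula, since that is where all the $\gamma$- and $\dN$-dependent bookkeeping lives and where sign errors are easy. Once that conjugation formula is in hand — whether derived directly from \eqref{eq:dunklleibniz} or imported from \cite{xu_harmonic_2000} — the rest is formal, with $\KelvK\KelvK=\mathrm{id}$ doing all the work in the inductive step.
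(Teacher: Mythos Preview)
Your first outlined approach and the multi-index induction are exactly the paper's strategy, but you are missing the one observation that makes the single-operator computation painless, and this is why you flag the ``inversion formula'' as the main obstacle. For a \emph{homogeneous} $p\in\Poly_m$ one has $p(x/\xsq)=\normxx{-2m}p(x)$, so
\[
\KelvK p(x)=\normxx{-(2\gamma+\dN-2-2m)}\,p(x)
\]
is simply a radial prefactor times $p$ itself. There is therefore no need to understand how $\Dun{j}$ interacts with the inversion $x\mapsto x/\xsq$: a single application of the radial Dunkl--Leibniz rule~\eqref{eq:dunklleibniz} to the product $\normxx{a}\,p(x)$ gives two homogeneous terms, and applying $\KelvK$ once more is again just multiplication by a power of $\normxx{}$. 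The bookkeeping is then a two-line exponent check that lands directly on $-\mm_j p$. Your description instead tries to differentiate $p(x/\xsq)$ as a genuine composition, which is where the mess you anticipate would come from.

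Your alternative $\Lie{sl}_2$-conjugation route is not the paper's and, as written, is incomplete: the assertion that $-\KelvK\Dun{j}\KelvK$ must lie in the span of $x_jH$, $x_j$, $\xsq\Dun{j}$ ``for degree reasons'' is not justified (there are other degree-one operators in $\RCA$, e.g.\ involving reflections), so you would still need a direct computation or an independent argument to pin down the coefficients. The commutator-matching idea also runs into the same issue of identifying the operator uniquely.

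The inductive step for $\mm^{\beta}$ using $\KelvK\KelvK=\mathrm{id}$ is correct and is precisely how the paper obtains the second identity.
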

\begin{proof}
	By linearity, it is sufficient to prove it for a homogeneous polynomial $p\in\Poly_n$. Apply the Dunkl--Leibniz rule~\eqref{eq:dunklleibniz} to get
	\begin{align*}
		\Dun{j}\KelvK p(x) &= \Dun{j} \normxx{-(2\gamma + \dN -2 + 2n)} p(x)\\
		&= \normxx{-(2\gamma + \dN -2 +2n)} \Dun{j}p(x)  - (2\gamma +\dN - 2 + 2n) \normxx{-(2\gamma+\dN + 2n)} x_j p(x).
		\intertext{Both terms have degree of homogeneity $-2\gamma - \dN + 1 - n$; we can apply again the Kelvin transform $\KelvK$ on the two sides to obtain}
		\KelvK\Dun{j}\KelvK p(x) &= \normxx{-(2\gamma + \dN - 2 - 4 \gamma - 2\dN + 2 -2n)}\normxx{-(2\gamma + \dN -2 + 2n)} \Dun{j} p(x)\\
		&\quad- (2\gamma + \dN +2n -2) \normxx{-(2\gamma + \dN -2 - 4\gamma - 2\dN +2 - 2n)}\normxx{-(2\gamma + \dN + 2n)} x_j p(x)\\
		&= \xsq \Dun{j} p(x)  + 2 x_j p(x) - (2\gamma +\dN+2n) x_jp(x),
	\end{align*}
	and this is precisely $-\mm_j p(x) =  - (2x_jH  - 2x_j - \xsq \Dun{j})p(x)$.
\end{proof}

\begin{proposition}\label{prop:commm}
	The generalised symmetries $\mm_j$ commute amongst themselves when acting on $\Poly$
	\begin{equation}
		\comm{\mm_j}{\mm_{k}} = 0.
	\end{equation}
\end{proposition}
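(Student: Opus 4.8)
The plan is to avoid expanding $\comm{\mm_j}{\mm_k}$ directly --- doing so would drag in the reflection-laden cross-terms $\comm{\Dun{i}}{x_j}$ from~\eqref{eq:commDx} --- and instead exploit the conjugation formula already established in Proposition~\ref{prop:maxDIH}. On $\Poly$ we have $\mm_j = -\KelvK\Dun{j}\KelvK$, so each $\mm_j$ is, up to sign, a conjugate of a Dunkl operator by the Kelvin transform; since conjugation preserves commutators and the Dunkl operators commute, one expects the $\mm_j$ to commute as well.

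Concretely, I would compose the two expressions on a polynomial $p \in \Poly$:
\begin{equation*}
	\mm_j\mm_k\,p = (-\KelvK\Dun{j}\KelvK)(-\KelvK\Dun{k}\KelvK)\,p = \KelvK\Dun{j}\,(\KelvK\KelvK)\,\Dun{k}\KelvK\,p = \KelvK\Dun{j}\Dun{k}\KelvK\,p,
\end{equation*}
the inner $\KelvK\KelvK$ being removed by the involutivity in~\eqref{eq:KelvK}, and likewise $\mm_k\mm_j\,p = \KelvK\Dun{k}\Dun{j}\KelvK\,p$. The fundamental commutativity $\comm{\Dun{j}}{\Dun{k}} = 0$ then gives $\Dun{j}\Dun{k}\KelvK p = \Dun{k}\Dun{j}\KelvK p$, hence $\mm_j\mm_k\,p = \mm_k\mm_j\,p$ for every $p$, i.e.\ $\comm{\mm_j}{\mm_k} = 0$ on $\Poly$.

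The step I would single out as the main technical nuisance is that $\KelvK$ does not preserve $\Poly$: for $p$ homogeneous of degree $m$, $\KelvK p = \normxx{c}\,p$ for an explicit exponent $c$ depending on $m$, and $\Dun{k}(\normxx{c}p)$ is, by the Dunkl--Leibniz rule~\eqref{eq:dunklleibniz}, a sum of terms of the shape $\normxx{2a}\cdot(\text{polynomial})$. One must check that $\KelvK$ is still well-defined away from the origin on such functions and that $\KelvK\KelvK = 1$ persists there, so that the cancellation of the inner $\KelvK\KelvK$ in the display is legitimate; this is the same degree bookkeeping already carried out in the proof of Proposition~\ref{prop:maxDIH}. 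Since both displayed identities hold degree by degree and $\Poly$ is spanned by homogeneous polynomials, the conclusion follows. A brute-force verification from~\eqref{eq:commsl2}--\eqref{eq:HinDunkl2} together with $\comm{\Dun{i}}{x_j} = \comm{\Dun{j}}{x_i}$ is also feasible but considerably more opaque.
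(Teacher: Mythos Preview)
Your argument is correct and is exactly the paper's proof: conjugate by $\KelvK$ using Proposition~\ref{prop:maxDIH}, collapse the inner $\KelvK\KelvK$ via~\eqref{eq:KelvK}, and invoke $\comm{\Dun{j}}{\Dun{k}}=0$. Your added remark about $\KelvK$ leaving $\Poly$ and the need to check involutivity on functions of the form $\normxx{2a}\cdot(\text{polynomial})$ is a valid caveat that the paper glosses over, but it does not change the argument.
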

\begin{proof}
By Proposition~\ref{prop:maxDIH}, when acting on $\Poly$,
\begin{equation}\label{eq:commm}
	\mm_j\mm_k = \KelvK\Dun{j}\KelvK\KelvK\Dun{k}\KelvK =  \KelvK\Dun{j}\Dun{k}\KelvK = \KelvK\Dun{k}\Dun{j}\KelvK = \KelvK\Dun{k}\KelvK\KelvK\Dun{j}\KelvK = \mm_k\mm_j.\qedhere
\end{equation}
\end{proof}

Let $\proj_{\Harmo}^{\Poly} \colon \Poly \to \Harmo$ denote the projection operator that, when restricted to $\Poly_n$, reduces to $\proj_{\Harmo_n}^{\Poly_n}$ given by \cite[(2.5)]{xu_harmonic_2000}
\begin{equation}\label{eq:xu2point5}
\proj_{\Harmo_n}^{\Poly_n}p(x) = \sum_{j=0}^{\lfloor n/2 \rfloor} \frac{\normxx{2j}\DLapl^{j} p(x)}{2^{2j}j!(-n-d/2-\gamma +2)_j},
\end{equation}
where the notation for the Pochhammer symbol is used, which is defined as $(a)_0=1$, and $(a)_n = a(a+1)\cdots(a+n-1) = \Gamma(a+n)/\Gamma(a)$, with $\Gamma$ the Gamma function.

The Dunkl harmonic $H_\beta (x)=(-1)^n \mm^{\beta}(1) $ is related to the projection~\eqref{eq:xu2point5} as follows.
\begin{theorem}[{\cite[Theorem~2.4]{xu_harmonic_2000}}]\label{thm:xu}
For $\beta\in \NN^\dN$ with $|\beta|_1 = n$, 
\begin{equation}\label{eq:xu}
H_{\beta}(x) = (-1)^n2^n(\gamma -1 + \dN/2)_n\proj_{\Harmo_n}^{\Poly_n}(x^{\beta}).
\end{equation}
\end{theorem}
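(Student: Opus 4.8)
The plan is to combine the generalised-symmetry identity of Proposition~\ref{prop:maxgensymH} with an elementary computation modulo $\xsq\Poly$. Fix $\beta\in\NN^\dN$ with $|\beta|_1=n$; I aim to establish two facts. First, that $\mm^\beta(1)$ is a Dunkl harmonic polynomial, homogeneous of degree $n$. Second, that
\[
\mm^\beta(1)\;\equiv\; 2^n(\gamma-1+\dN/2)_n\,x^\beta \pmod{\xsq\Poly_{n-2}}.
\]
Granting both, the result drops out: the operator $\proj_{\Harmo_n}^{\Poly_n}$ of~\eqref{eq:xu2point5} is the projection of $\Poly_n$ onto $\Harmo_n$ along $\xsq\Poly_{n-2}$ --- as one sees from its explicit form (the $j=0$ term is the identity, the remaining terms lie in $\xsq\Poly$, and the output is harmonic) together with the decomposition $\Poly_n=\Harmo_n\oplus\xsq\Poly_{n-2}$ --- so it fixes $\Harmo_n$ and annihilates $\xsq\Poly_{n-2}$. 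Applying it to the displayed congruence and using the first fact then gives $\mm^\beta(1)=2^n(\gamma-1+\dN/2)_n\,\proj_{\Harmo_n}^{\Poly_n}(x^\beta)$, whence $H_\beta(x)=(-1)^n\mm^\beta(1)=(-1)^n2^n(\gamma-1+\dN/2)_n\,\proj_{\Harmo_n}^{\Poly_n}(x^\beta)$.

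For the first fact, write $H=\Euler+\dN/2+\gamma$ as in~\eqref{eq:HinDunkl}: the summand $2x_j(H-1)$ of $\mm_j$ maps $\Poly_m$ into $\Poly_{m+1}$, and so does $\xsq\Dun{j}$ since $\Dun{j}$ lowers the degree by one; hence $\mm_j$ raises the degree of homogeneity by one and $\mm^\beta(1)\in\Poly_n$. Then I would rewrite~\eqref{eq:maxgensymH} as $\DLapl\,\mm_j=(\mm_j+4x_j)\,\DLapl$ and iterate, obtaining $\DLapl\,\mm^\beta=\bigl(\prod_k(\mm_k+4x_k)^{\beta_k}\bigr)\DLapl$ (ordered product), so that $\DLapl\,\mm^\beta(1)=0$ and $\mm^\beta(1)\in\Harmo_n$.

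For the second fact, I would pass to the quotient $\Poly/\xsq\Poly$. Each $\mm_j$ stabilises $\xsq\Poly$: the operators $x_j$ and $H-1$ do (for the latter use $\Euler(\xsq q)=\xsq(\Euler+2)q$), and $\xsq\Dun{j}$ maps all of $\Poly$ into $\xsq\Poly$. Hence $\mm_j$ descends to an operator on $\Poly/\xsq\Poly$ that acts as the scalar $H-1$ followed by multiplication by $2x_j$; on the class of a homogeneous polynomial of degree $m$ the scalar $H-1$ equals $m+\dN/2+\gamma-1$. Applying $\mm_\dN^{\beta_\dN}$, then $\mm_{\dN-1}^{\beta_{\dN-1}}$, and so on down to $\mm_1^{\beta_1}$, to the constant $1$, and telescoping the resulting product of linear factors by means of $(a)_k(a+k)_\ell=(a)_{k+\ell}$ with $a=\gamma-1+\dN/2$, I obtain the displayed congruence; the difference between the two sides is homogeneous of degree $n$, hence genuinely lies in $\xsq\Poly_{n-2}$.

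I do not anticipate a serious obstacle: the only place demanding care is the bookkeeping of the second fact --- tracking the degree at each application and assembling the Pochhammer symbol in the fixed order $\mm_1^{\beta_1}\cdots\mm_\dN^{\beta_\dN}$, as well as verifying that every $\mm_j$ really stabilises $\xsq\Poly$ so the quotient argument is legitimate; should one prefer, this step can be rephrased as an induction on $|\beta|_1$ with no mention of the quotient. An alternative route bypasses the second fact entirely by invoking Proposition~\ref{prop:maxDIH}: one writes $\mm^\beta(1)=(-1)^n\KelvK\Dun{}^\beta\KelvK(1)$ with $\KelvK(1)=\normxx{-(2\gamma+\dN-2)}$ and expands the Dunkl derivatives directly via the Dunkl--Leibniz rule~\eqref{eq:dunklleibniz}, but the congruence argument is shorter and keeps the role of the generalised symmetries visible.
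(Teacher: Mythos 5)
The paper does not prove this statement: it is quoted directly from Xu's work (the bracketed citation in the theorem header), and the text preceding it only records the translation $H_\beta(x) = (-1)^n\mm^\beta(1)$. Your proof is therefore an independent, self-contained one, and it is correct. The three ingredients hold up: iterating $\DLapl\,\mm_j = (\mm_j+4x_j)\DLapl$ gives $\DLapl\,\mm^\beta(1)=0$ with $\mm^\beta(1)\in\Poly_n$, hence $\mm^\beta(1)\in\Harmo_n$; each $\mm_j=2x_j(H-1)-\xsq\Dun{j}$ does preserve $\xsq\Poly$ (the $\xsq\Dun{j}$ summand trivially, and $x_j$, $H-1$ because $H(\xsq q)=\xsq(H+2)q$), so the induced action on $\Poly/\xsq\Poly$ is multiplication by $2x_j$ times the scalar value of $H-1$ on the current degree, and the Pochhammer factors telescope to $(\gamma-1+\dN/2)_n$; and since $\proj_{\Harmo_n}^{\Poly_n}$ equals the identity plus terms in $\xsq\Poly_{n-2}$ and has harmonic output, it is the Fischer projector onto $\Harmo_n$ along $\xsq\Poly_{n-2}$ and kills the congruence error. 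This argument stays entirely within the operator-theoretic framework the paper sets up (generalised symmetries, the $\Lie{sl}_2$ realisation), whereas Xu's original proof works through the Kelvin transform and Dunkl--Leibniz expansions --- the alternative route you sketch at the end via Proposition~\ref{prop:maxDIH} is essentially Xu's. The only thing worth making explicit for a reader is the fact that the expression~\eqref{eq:xu2point5} really is the Fischer projector, i.e.\ that its output is harmonic and that $\Poly_n=\Harmo_n\oplus\xsq\Poly_{n-2}$ holds for $\kappa\geq0$; you invoke this correctly but somewhat tersely, and it is the linchpin of your Step~3.
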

We conclude this section with a result relating the operator $\mm_j$ with the projection~\eqref{eq:xu2point5}.

\begin{proposition}\label{prop:mproj}
With $H$ given by~\eqref{eq:HinDunkl} and $x_j$ the operator that multiplies a polynomial by $x_j$, when acting on 	$\Harmo$ we have
	\begin{equation}
		\mm_j =  2 (H-2) \circ \proj_{\Harmo}^{\Poly} \circ\, x_j\,.
	\end{equation}
\end{proposition}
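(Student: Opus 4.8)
The plan is to check the identity on a single homogeneous Dunkl harmonic $p\in\Harmo_n$ and then conclude by linearity, using that $\Harmo=\bigoplus_n\Harmo_n$ and that both sides raise the homogeneity degree by one (for $\mm_j$: $2x_j(H-1)$ raises by one, while $\xsq\Dun{j}$ raises by two and then lowers by one).

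First I would evaluate the left-hand side directly from the definition~\eqref{eq:maxwellopH}. On $\Harmo_n$ the operator $H$ acts as the scalar $n+\dN/2+\gamma$ by~\eqref{eq:HinDunkl}, so
\[
  \mm_j p = 2(n+\dN/2+\gamma-1)\,x_j p - \xsq\,\Dun{j}p .
\]
Next I would compute $\proj_{\Harmo}^{\Poly}(x_j p)=\proj_{\Harmo_{n+1}}^{\Poly_{n+1}}(x_j p)$ from~\eqref{eq:xu2point5}. The key observation is that almost every summand vanishes: since $\DLapl p=0$ and $\comm{\DLapl}{x_j}=2\Dun{j}$ by~\eqref{eq:HinDunkl2} we get $\DLapl(x_j p)=2\Dun{j}p$, and then $\DLapl^2(x_j p)=2\DLapl\Dun{j}p=2\Dun{j}\DLapl p=0$ because the Dunkl operators commute with one another and hence with $\DLapl$. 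Thus only the $k=0$ and $k=1$ terms survive, and with $\bigl(-(n+1)-\dN/2-\gamma+2\bigr)_1=-\bigl(n-1+\dN/2+\gamma\bigr)$ this yields
\[
  \proj_{\Harmo_{n+1}}^{\Poly_{n+1}}(x_j p) = x_j p - \frac{\xsq\,\Dun{j}p}{2(n-1+\dN/2+\gamma)} .
\]

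Finally, $\proj_{\Harmo_{n+1}}^{\Poly_{n+1}}(x_j p)$ lies in $\Harmo_{n+1}$, on which $H$ acts as the scalar $n+1+\dN/2+\gamma$, so $2(H-2)$ acts as the scalar $2(n-1+\dN/2+\gamma)$. Applying it clears the denominator and reproduces exactly the expression for $\mm_j p$ obtained above, which proves the identity. I do not expect a genuine obstacle here; the only points needing care are the bookkeeping of homogeneity degrees — so that the eigenvalue of $2(H-2)$ on $\Harmo_{n+1}$ cancels the Pochhammer factor attached to degree $n+1$ — and the vanishing $\DLapl^2(x_j p)=0$, which is precisely where commutativity of the Dunkl operators is used.
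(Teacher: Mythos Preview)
Your proof is correct and follows essentially the same approach as the paper: both restrict to a homogeneous harmonic, use $\comm{\DLapl}{x_j}=2\Dun{j}$ together with $\DLapl p=0$ to see that only the first two terms of the projection formula~\eqref{eq:xu2point5} survive, and then let $2(H-2)$ act by its scalar eigenvalue to recover~\eqref{eq:maxwellopH}. The only cosmetic difference is that the paper indexes the harmonic as $h_{n-1}\in\Harmo_{n-1}$ and distributes $2(H-2)$ across the two surviving terms directly, whereas you index as $p\in\Harmo_n$ and first note the projection lands in $\Harmo_{n+1}$ before applying the scalar.
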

\begin{proof}
	Let $h_{n-1}\in\Harmo_{n-1}$, then $\DLapl^{k} x_j(h_{n-1})=0$ for $k\geq 2$, so using~\eqref{eq:xu2point5} we have
	\begin{align*}
2 (H-2)	\proj_{\Harmo}^{\Poly} ( x_j h_{n-1})  
=\ &2 (H-2)\proj_{\Harmo_{n}}^{\Poly_{n}} ( x_j h_{n-1})\\
		=\ &  2 (H-2) x_j h_{n-1} - 2( \Euler + \dN/2 + \gamma-2) \normxx{2}\DLapl (x_j h_{n-1})/(4(\gamma+n-2+d/2))\\
		=\ &  2  x_j(H-1) h_{n-1}  -\normxx{2} \Dun{j} h_{n-1},
	\end{align*}
where we used 	$\comm{\DLapl}{x_j} = 2 \Dun{j}$ and $ \DLapl h_{n-1}=0$.
	The last line is precisely~\eqref{eq:maxwellopH}.
\end{proof}


\section{Dunkl monogenics} \label{sec:Dunklmono}

\subsection{Clifford algebra}

Let $\eps \in \{-1,+1\}$ be a sign and let $\Clif(\dN)$ be the Clifford algebra associated with $\RR^\dN$ and  $\bili{-}{-}$, which is 
generated by $e_1,\dots,e_\dN$, the images of the canonical basis of $\RR^\dN$: $\xi_j\mapsto e_j$,  subject to the following anticommutation relations
\begin{equation}\label{eq:cliffordgen}
	\acomm{e_i}{e_j} = e_ie_j + e_je_i = 2\eps\delta_{ij}. 
\end{equation}
 A double cover $\Pin^{\eps}(\dN)$ of the orthogonal group is realised inside the Clifford algebra by the products of unit vectors. Depending on $\eps$, these groups are in general not isomorphic~\cite{atiyah1964}.

Some specific elements in the tensor product $ \RCA\otimes \Clif(\dN)$ are denoted as follows, with the tensor product omitted,
\begin{align}
	\xun &:= \sum_{j=1}^{\dN} x_je_j,& \DDop &:= \sum_{j=1}^\dN \Dun{j}e_j, & \alphu &:= \sum_{j=1}^\dN\alpha_je_j,
\end{align}
where $\xun$ is the \textit{vector variable} and $\DDop$ the \textit{Dunkl--Dirac operator}. Note that for $\alpha\in\RR^\dN$, $\alphu \in \Clif(\dN)$ which we identify with $ 1\otimes\Clif(\dN) \subset  \RCA\otimes \Clif(\dN)$. 

Up to the sign $\varepsilon$, the square of the Dunkl--Dirac operator is the Dunkl--Laplace operator and its dual operator is the square of the vector variable:
\begin{align}
	\DLapl &:= \sum_{j=1}^\dN \Dun{j}^2 = \varepsilon\DDop^2, & \xsq &:=  \sum_{j=1}^\dN x_j^2 = \varepsilon \xun^2.
\end{align}
Moreover, we have \begin{equation}
	\acomm{\DDop}{\xun} = 2\varepsilon H = 2\varepsilon(\Euler + \dN/2 + \gamma),\label{eq:osprel}
\end{equation}
and $\DDop$ and $\xun$ are the odd generators of a realisation of the Lie superalgebra $\osp(1|2)$ containing the Lie algebra $\Lie{sl}(2)$ as an even subalgebra realised by \eqref{eq:commsl2}~\cite[Theorem~3.4]{de_bie_algebra_2018}:
\begin{equation}
\label{eq:osprel2}
	\comm{\DDop}{\xsq} = 2\xun, \quad \comm{\xun}{\Lapl} = -2\DDop, \quad
\comm{\DDop}{H} = \DDop, \quad \comm{\xun}{H} = -\xun. 
\end{equation}

Unlike $\DLapl$ and $\xsq$, the Dunkl--Dirac operator $\DDop$ and the vector variable $ \xun$ do not commute with the elements of $W$ inside the $\RCA$-part of $\RCA\otimes \Clif(\dN)$, as this copy of $W$ does not interact with $\Clif(\dN)$. However, elements of the form 
\begin{equation}
	\dsig{\alpha} := \alphu\sigma_{\alpha}, \quad\text{for } \alpha\in R,
\end{equation}
anticommute with $\DDop$ and $ \xun$. These elements generate a group
\begin{equation}
	\dcover{W}^{\varepsilon} :=\langle \dsig{\alpha} := \alphu\sigma_{\alpha}\mid \alpha\in R^+\rangle \subset \RCA\otimes\Clif(\dN),
\end{equation}
which is a double covering of the reflection group $W$. Viewing $W$ as a subgroup of the orthogonal group $\mathcal{O}(\dN)$, the double cover $\dcover{W}^{\varepsilon}$ arises through  the pullback of the projection of one of its double coverings $\Pin^{\varepsilon}(\dN)$ onto $\mathcal{O}(\dN)$. An abstract presentation by generators and relations from the ones of $W$ can be also be given, see~\cite{Morris76}.
The dependency of $\dcover{W}^{\varepsilon}$ on $\varepsilon$ is apparent from the inclusion of the Clifford elements in $\alphu$. The two different values of $\varepsilon$ will yield in general the two non-isomorphic double coverings of $W$: $\dcover{W}^+$ and $\dcover{W}^-$. However depending on the group $W$, they may be trivial, as for example for $W=S_3$~\cite[Thm~4.1]{Morris76}.

\begin{lemma}\label{lem:ospgen}
	The Dunkl--Dirac operator $\DDop$ and its dual symbol $\xun$ respect the following relations
	\begin{gather}
		\acomm{\dsig{\alpha}}{\xun} = 0 = \acomm{\dsig{\alpha}}{\DDop};\label{eq:dsiganticomm}\\
		\comm{\Dun{j}}{\xun} = \comm{\DDop}{x_j} = e_j + 2\sum_{\alpha\in R^+} \kappa(\alpha) \alpha_j \dsig{\alpha};\label{eq:DDx}\\
		\comm{\DDop}{\sigma_{\alpha}} = 2\bili{\Dun{}}{\alpha}\dsig{\alpha}, \qquad
		\comm{\xun}{\sigma_{\alpha}} = 2\bili{x}{\alpha}\dsig{\alpha}.
	\end{gather}
\end{lemma}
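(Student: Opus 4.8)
The plan is to verify each of the stated identities in Lemma~\ref{lem:ospgen} by direct computation, reducing everything to the defining relations of $\RCA$ recorded in Section~\ref{sec:Dunkl} (in particular~\eqref{eq:commDx}) together with the Clifford anticommutation relations $\acomm{e_i}{e_j} = 2\varepsilon\delta_{ij}$ and the behaviour of $W$ on the $\RCA$-factor. The three lines are logically independent, so I would treat them one at a time, but the first and third are closely linked: once~\eqref{eq:DDx} and the $\sigma_\alpha$-commutators are in hand, the anticommutation relations~\eqref{eq:dsiganticomm} follow almost formally.

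First I would establish~\eqref{eq:DDx}. Since the copy of $W$ in the $\RCA$-factor does not touch $\Clif(\dN)$, we have $\comm{\Dun{j}}{\xun} = \sum_k \comm{\Dun{j}}{x_k} e_k$; substituting~\eqref{eq:commDx} gives $\sum_k \bigl(\delta_{jk} + 2\sum_{\alpha\in R^+}\kappa(\alpha)\alpha_j\alpha_k\sigma_\alpha\bigr) e_k = e_j + 2\sum_{\alpha\in R^+}\kappa(\alpha)\alpha_j\bigl(\sum_k \alpha_k e_k\bigr)\sigma_\alpha = e_j + 2\sum_{\alpha\in R^+}\kappa(\alpha)\alpha_j\alphu\sigma_\alpha = e_j + 2\sum_{\alpha\in R^+}\kappa(\alpha)\alpha_j\dsig{\alpha}$, using that $\sigma_\alpha$ commutes with every $e_k$. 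The computation of $\comm{\DDop}{x_j}$ is identical after swapping the roles of the Dunkl operators and the variables, and the two answers agree because $\comm{\Dun{i}}{x_j}=\comm{\Dun{j}}{x_i}$; this gives the claimed common value.

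Next, for the $\sigma_\alpha$-commutators I would use the reflection intertwining relations for Dunkl operators, namely $\sigma_\alpha \Dun{j} \sigma_\alpha^{-1} = \sum_k (\sigma_\alpha)_{jk}\Dun{k}$ where $(\sigma_\alpha)_{jk}$ is the matrix of the reflection~\eqref{eq:actionref}, i.e.\ $\sigma_\alpha \xi_k = \xi_k - 2\bili{\xi_k}{\alpha}\alpha$ coordinatewise. Then $\DDop\sigma_\alpha - \sigma_\alpha\DDop = \sum_j(\Dun{j}\sigma_\alpha - \sigma_\alpha\Dun{j})e_j = \sum_j \bigl(\Dun{j} - \sigma_\alpha\Dun{j}\sigma_\alpha^{-1}\bigr)\sigma_\alpha e_j$; since $\sigma_\alpha(\xi_j) = \xi_j - 2\bili{\xi_j}{\alpha}\alpha$, the bracket contributes $2\bili{\xi_j}{\alpha}\bili{\Dun{}}{\alpha}$ in the $j$-th slot (using $\bili{\alpha}{\alpha}=1$ from the normalisation), so the sum telescopes to $2\bili{\Dun{}}{\alpha}\bigl(\sum_j \bili{\xi_j}{\alpha}e_j\bigr)\sigma_\alpha = 2\bili{\Dun{}}{\alpha}\alphu\sigma_\alpha = 2\bili{\Dun{}}{\alpha}\dsig{\alpha}$, where $\bili{\Dun{}}{\alpha} := \sum_j \alpha_j\Dun{j}$ is central enough to pull out since it commutes with $e_j$. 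The relation $\comm{\xun}{\sigma_\alpha} = 2\bili{x}{\alpha}\dsig{\alpha}$ is proved the same way with $x_j$ in place of $\Dun{j}$.

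Finally, for~\eqref{eq:dsiganticomm} I would compute $\acomm{\dsig{\alpha}}{\DDop} = \alphu\sigma_\alpha\DDop + \DDop\alphu\sigma_\alpha$. Writing $\DDop\sigma_\alpha = \sigma_\alpha\DDop + 2\bili{\Dun{}}{\alpha}\dsig{\alpha}$ from the previous step and pushing $\alphu$ through $\DDop$ via $\DDop\alphu + \alphu\DDop = \sum_{j,k}\acomm{\Dun{j}e_j}{\alpha_k e_k}$ — here one uses $e_je_k + e_ke_j = 2\varepsilon\delta_{jk}$ to get $\DDop\alphu + \alphu\DDop = 2\varepsilon\bili{\Dun{}}{\alpha} + (\text{correction from }\comm{\Dun{j}}{\alpha_k}=0)$, so in fact $\acomm{\DDop}{\alphu} = 2\varepsilon\bili{\Dun{}}{\alpha}$ plus the Dunkl correction term $2\sum_{\beta\in R^+}\kappa(\beta)\bili{\alpha}{\beta}\dsig{\beta}$ arising from $\comm{\Dun{j}}{\alpha_k}$ being zero but $\Dun{j}$ not commuting with the $\sigma$'s hidden in nothing — I would organise this so that the two contributions, one from moving $\DDop$ past $\sigma_\alpha$ and one from moving it past $\alphu$, cancel. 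It is cleanest to instead observe that $\alphu\sigma_\alpha$ acts on $\RR^\dN$ (via its natural action and the identification with $\Pin^\varepsilon(\dN)$) as $-\sigma_\alpha$ on the vector $\xun$ and $\DDop$, since the Clifford element $\alphu$ conjugates a vector $v$ to $-\sigma_\alpha(v)$ up to scalar; this is exactly the statement that $\dsig{\alpha}$ anticommutes with both $\xun$ and $\DDop$, and it can be checked on generators: $\dsig{\alpha}x_j = \alphu\sigma_\alpha x_j = \alphu(\sigma_\alpha x_j)\sigma_\alpha$ and then $\alphu e_\ell + e_\ell\alphu = 2\varepsilon\alpha_\ell$ lets one verify $\alphu\xun = -\xun\alphu + 2\varepsilon\bili{x}{\alpha}$, which combined with $\sigma_\alpha\xun = \xun\sigma_\alpha - 2\bili{x}{\alpha}\dsig{\alpha}$ (rearranged from the third line, already proved) yields $\acomm{\dsig{\alpha}}{\xun}=0$; the argument for $\DDop$ is verbatim.

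The main obstacle is the last line: carefully tracking the two sources of extra terms in $\acomm{\dsig{\alpha}}{\DDop}$ — the non-commutativity of $\sigma_\alpha$ with $\Dun{j}$ (handled by the third line) and the non-commutativity of $\alphu$ with $\DDop$ in the Clifford factor (handled by the anticommutator $\acomm{e_i}{e_j}=2\varepsilon\delta_{ij}$) — and checking that the $\bili{x}{\alpha}$-type correction terms produced by each exactly annihilate one another. Everything else is a routine substitution of~\eqref{eq:commDx} and the Clifford relations.
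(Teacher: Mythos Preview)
Your approach is correct and matches the paper's: the paper simply says the relations follow from~\eqref{eq:actionref},~\eqref{eq:commDx} and the Clifford relations, and your proposal carries out exactly those direct computations. The arguments for~\eqref{eq:DDx} and the $\sigma_\alpha$-commutators are clean and right, and the final version of your argument for~\eqref{eq:dsiganticomm} (combining $\alphu\xun = -\xun\alphu + 2\varepsilon\bili{x}{\alpha}$ with the already-proved $\comm{\xun}{\sigma_\alpha}=2\bili{x}{\alpha}\dsig{\alpha}$, together with $\alphu^2=\varepsilon$) is correct.

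One cosmetic point: the passage where you worry about a ``Dunkl correction term $2\sum_{\beta}\kappa(\beta)\bili{\alpha}{\beta}\dsig{\beta}$'' in $\acomm{\DDop}{\alphu}$ is a false start---since $\alphu=\sum_k\alpha_k e_k$ lives purely in $1\otimes\Clif(\dN)$ with scalar coefficients, it commutes with every $\Dun{j}$ and there are no hidden $\sigma$'s, so $\acomm{\DDop}{\alphu}=2\varepsilon\bili{\Dun{}}{\alpha}$ exactly. You recover from this anyway in the next sentence, so it does not affect the validity of the proof; just delete that digression in the write-up.
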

\begin{proof}
	These relations follow using~\eqref{eq:actionref},~\eqref{eq:commDx} and the Clifford algebra relations. 
\end{proof}

To denote more compactly specific linear combinations of elements in $\dcover{W}^{\varepsilon}$ such as the one appearing in the right-hand side of~\eqref{eq:DDx}, we write (see also~\cite[Ex. 4.2]{de_bie_algebra_2018}) 
\begin{equation}\label{eq:symi2}
	\sym{j}:= \frac{\varepsilon}{2}( \comm{\DDop}{x_j} - e_j) = \varepsilon\sum_{\alpha \in R^+} \kappa(\alpha) \alpha_j \dsig{\alpha}.
\end{equation}

\begin{lemma}\label{lem:ojejisomega}
	The following holds
	\begin{equation}
		\sum_{j=1}^\dN \sym{j} e_j =\sum_{\alpha\in R^+} \kappa(\alpha)\sigma_{\alpha}.
	\end{equation}
\end{lemma}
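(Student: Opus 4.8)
The plan is to compute $\sum_{j=1}^{\dN} \sym{j} e_j$ directly by substituting the explicit formula for $\sym{j}$ from~\eqref{eq:symi2}. Using $\sym{j} = \varepsilon \sum_{\alpha \in R^+} \kappa(\alpha) \alpha_j \dsig{\alpha}$ and $\dsig{\alpha} = \alphu \sigma_\alpha$, we get
\begin{equation*}
\sum_{j=1}^{\dN} \sym{j} e_j = \varepsilon \sum_{j=1}^{\dN} \sum_{\alpha \in R^+} \kappa(\alpha) \alpha_j \alphu \sigma_\alpha e_j = \varepsilon \sum_{\alpha \in R^+} \kappa(\alpha) \alphu \left( \sum_{j=1}^{\dN} \alpha_j e_j \right) \sigma_\alpha,
\end{equation*}
where I have used that $\sigma_\alpha$ (the $\RCA$-part reflection) commutes with the Clifford generators $e_j$, so it can be moved to the right past $e_j$. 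The inner sum $\sum_{j=1}^{\dN} \alpha_j e_j$ is exactly $\alphu$ by definition, so this becomes $\varepsilon \sum_{\alpha \in R^+} \kappa(\alpha)\, \alphu^2\, \sigma_\alpha$.

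The key remaining computation is to evaluate $\alphu^2$ in $\Clif(\dN)$. Since $\alphu = \sum_j \alpha_j e_j$, the anticommutation relation $\acomm{e_i}{e_j} = 2\varepsilon \delta_{ij}$ gives $\alphu^2 = \sum_{i,j} \alpha_i \alpha_j e_i e_j = \varepsilon \sum_j \alpha_j^2 = \varepsilon \bili{\alpha}{\alpha}$. Because the roots of $R$ are assumed normalised, $\bili{\alpha}{\alpha} = 1$, hence $\alphu^2 = \varepsilon$. Substituting back, the $\varepsilon^2 = 1$ factors cancel and we obtain $\sum_{j=1}^{\dN} \sym{j} e_j = \sum_{\alpha \in R^+} \kappa(\alpha) \sigma_\alpha$, which is the claim.

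There is no real obstacle here; the only point requiring a little care is the commutativity of the $\RCA$-copy of $\sigma_\alpha$ with the Clifford generators $e_j$, which is exactly the structural fact emphasised in the paragraph preceding Lemma~\ref{lem:ospgen} (``this copy of $W$ does not interact with $\Clif(\dN)$''), together with the normalisation of roots that makes $\alphu^2 = \varepsilon$. I would state both of these explicitly as the two inputs and present the three-line display above as the whole proof.
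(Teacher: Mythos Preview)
Your proof is correct and is essentially identical to the paper's own argument: substitute~\eqref{eq:symi2}, commute $\sigma_\alpha$ past $e_j$, collapse $\sum_j \alpha_j e_j = \alphu$, and use $\alphu^2 = \eps$ from the normalisation of roots. The paper's proof is the same one-line computation, only slightly terser in that it does not spell out the commutativity of $\sigma_\alpha$ with $e_j$ or the expansion of $\alphu^2$ explicitly.
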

\begin{proof}
	Replacing $\sym{j}$ by its expression~\eqref{eq:symi2}, and using the anticommutation of Clifford elements and the fact that the roots are normalised, in particular that $\alphu^2 = \eps$, yields
	\begin{align}
		\sum_{j=1}^\dN \sym{j}e_j &= \eps\sum_{j=1}^\dN \sum_{\alpha\in R^+} \kappa(\alpha) \alpha_j \alphu\sigma_{\alpha} e_j = \eps \sum_{\alpha\in R^+} \kappa(\alpha) \alphu^2\sigma_{\alpha} = \sum_{\alpha\in R^+} \kappa(\alpha) \sigma_{\alpha}.
	\end{align}
\end{proof}

\subsection{Dunkl monogenics} 

Let $V$ be an irreducible representation of $\Clif(\dN)$, also called a spinor representation. There is a natural action of $\RCA\otimes \Clif(\dN)$ on the space $\Poly \otimes V$. The space of Dunkl monogenic polynomials consists of the elements of $\Poly \otimes V$ that are in the kernel of the Dunkl--Dirac operator, and will be denoted by $\Mono := \Mono(\RR^\dN;V) $. We denote $\Mono_n :=\Mono_n(\RR^\dN;V)= \Mono \cap (\Poly_n \otimes V)$ for the $\Mono$-subspace of (spinor valued) homogeneous polynomials of degree $n$, and we have $\Mono(\RR^\dN;V) = \bigoplus_{n\geq 0} \Mono_n(\RR^\dN;V)$. 

There is a projection $\proj^{\Poly\otimes V}_{\Mono}: \Poly(\RR^\dN)\otimes V \longrightarrow  \Mono(\RR^\dN;V)$ that, when restricted to $\Poly_n\otimes V$, is given by~\cite[Lem.~4.6]{orsted_howe_2009}
\begin{equation}\label{eq:projpi}
	\begin{gathered}
		\proj^{\Poly_n\otimes V}_{\Mono_n}: \Poly_n(\RR^\dN)\otimes V \longrightarrow  \Mono_{n}(\RR^\dN;V)\\
		p\longmapsto  p - \eps\sum_{j=0}^{\lfloor (n+1)/2\rfloor} \frac{(-1)^{j}\xun^{2j+1}\,\DDop^{2j+1}p}{2^{2j+1}j!(n-j-1+\dN/2+\gamma)_{j+1}} + \sum_{j=1}^{\lfloor n/2\rfloor}\frac{(-1)^j\normxx{2j}\DLapl^j p}{2^{2j}j!(n-j+\dN/2+\gamma)_j}.
	\end{gathered}
\end{equation}
\begin{remark}
	Each Dunkl monogenic polynomial is a highest weight vector for the $\Lie{osp}(1|2)$ realisation containing the Dunkl--Dirac operator as positive root vector. For the $\Lie{osp}(1|2)$ extremal projector $\proj^{\Poly\otimes V}_{\Mono}$, we have~\cite[(3.8a)]{berezin_group_1981} 
	\begin{equation}\label{eq:projproj}
		\proj_{\Mono}^{\Poly\otimes V} =\proj_{\Mono}^{\Harmo\otimes V} \proj_{\Harmo\otimes V}^{\Poly\otimes V},
	\end{equation}
	where $\proj_{\Harmo\otimes V}^{\Poly\otimes V} = \proj_{\Harmo}^{\Poly}$ as considered above~\eqref{eq:xu2point5} for $\Lie{sl}(2)$, and $\proj_{\Mono}^{\Harmo\otimes V}$, when restricted to $\Poly_n\otimes V$, is given by
	\begin{equation}\label{eq:projHM}
	\proj_{\Mono_n}^{\Harmo_n\otimes V} = \left(1- \frac{ \eps\xun\,\DDop}{2(n-1 + \dN/2 + \gamma)}\right).
\end{equation}
\end{remark}

\subsection{Generalised symmetries}

Recall that $\varepsilon\in \{-1,+1\}$ and that the Clifford generators $e_j$ satisfy relations~\eqref{eq:cliffordgen}.

\begin{definition}
	We define $\zz_j\in \alg{A}\otimes\Clif(\dN)$ for $1\leq j\leq \dN$ by 
	\begin{equation}\label{eq:Z}
		\zz_j:= 2\varepsilon x_jH - \xun\Dun{j}\xun.
	\end{equation}
	For a multi-index $\beta=(\beta_1,\dots , \beta_\dN)\in \NN^\dN$, we write $\zz^{\beta} := \zz_1^{\beta_1}\dots \zz_{\dN}^{\beta_\dN}$.
\end{definition}

We begin by giving alternative formulations of $\zz_j$ in terms of elements of $\RCA\otimes \Clif(\dN)$ that follow from Lemma~\ref{lem:ospgen} and the expressions~\eqref{eq:symi2} of $\sym{j}$.
\begin{lemma}\label{lem:zotherversions}
	The operator $\zz_j:= 2\varepsilon x_j H - \xun \Dun{j}\xun$ has the following expressions 
	\begin{align}\label{eq:zzj}
		\zz_j &= x_j\acomm{\DDop}{\xun}  - \xun  \comm{\DDop}{x_j}  - \varepsilon\xsq \Dun{j};\\
		\zz_j &= 2\varepsilon x_j(\Euler + \dN/2 + \gamma)  - \xun (e_j + 2\varepsilon \sym{j}) - \varepsilon\xsq \Dun{j}.\label{eq:lem:zotherversions3}
	\end{align} 
\end{lemma}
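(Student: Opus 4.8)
The plan is to derive both identities directly from the definition $\zz_j = 2\varepsilon x_j H - \xun\Dun{j}\xun$ by rewriting the piece $\xun\Dun{j}\xun$ using the $\osp(1|2)$-type relations of Lemma~\ref{lem:ospgen} and the $\Lie{sl}_2$-triple relations. The single computational fact that drives everything is the commutator $\comm{\Dun{j}}{\xun} = e_j + 2\varepsilon\sym{j} = \comm{\DDop}{x_j}$ from~\eqref{eq:DDx} and~\eqref{eq:symi2}, together with $\acomm{\DDop}{\xun} = 2\varepsilon H$ from~\eqref{eq:osprel} and $\varepsilon\xun^2 = \xsq$.

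First I would establish~\eqref{eq:zzj}. Move the Dunkl operator past the left-hand vector variable: $\xun\Dun{j}\xun = (\Dun{j}\xun - \comm{\Dun{j}}{\xun})\xun = \Dun{j}\xun^2 - \xun(\Dun{j}\xun) \cdot 0$ — more carefully, write $\xun \Dun{j}\xun = \xun\bigl(\xun\Dun{j} + \comm{\Dun{j}}{\xun}\bigr) = \xun^2\Dun{j} + \xun\comm{\Dun{j}}{\xun}$. Since $\varepsilon\xun^2 = \xsq$, the first term is $\varepsilon\xsq\Dun{j}$, and by~\eqref{eq:DDx} the second term is $\xun\comm{\DDop}{x_j}$. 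Hence
\begin{equation*}
  \zz_j = 2\varepsilon x_j H - \varepsilon\xsq\Dun{j} - \xun\comm{\DDop}{x_j}.
\end{equation*}
It only remains to recognise $2\varepsilon x_j H$ as $x_j\acomm{\DDop}{\xun}$, which is exactly~\eqref{eq:osprel}. This yields~\eqref{eq:zzj}.

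For~\eqref{eq:lem:zotherversions3} I would simply substitute the explicit evaluations of the two (anti)commutators appearing in~\eqref{eq:zzj}: by~\eqref{eq:osprel} we have $\acomm{\DDop}{\xun} = 2\varepsilon(\Euler + \dN/2 + \gamma)$, so $x_j\acomm{\DDop}{\xun} = 2\varepsilon x_j(\Euler + \dN/2 + \gamma)$, and by~\eqref{eq:DDx}--\eqref{eq:symi2} we have $\comm{\DDop}{x_j} = e_j + 2\varepsilon\sym{j}$, so $\xun\comm{\DDop}{x_j} = \xun(e_j + 2\varepsilon\sym{j})$. Plugging these into~\eqref{eq:zzj} gives~\eqref{eq:lem:zotherversions3} verbatim.

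Both steps are short bookkeeping, so I do not expect a genuine obstacle; the only point requiring a little care is the order of operators when commuting $\Dun{j}$ through $\xun$ — one must remember that $\Dun{j}$ and $\xun$ do not commute (their commutator contains the Clifford element $e_j$ and the $\dsig{\alpha}$ terms), so the rearrangement $\xun\Dun{j}\xun = \xun^2\Dun{j} + \xun\comm{\Dun{j}}{\xun}$ must be done on the correct side, and one must not accidentally use $\comm{\xun}{\Dun{j}}$ with the wrong sign. Everything else is a direct appeal to relations already recorded in Lemma~\ref{lem:ospgen} and equations~\eqref{eq:osprel}--\eqref{eq:symi2}.
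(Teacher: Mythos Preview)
Your proof is correct and is exactly the argument the paper has in mind: the lemma is stated without a written-out proof, only the remark that the expressions ``follow from Lemma~\ref{lem:ospgen} and the expressions~\eqref{eq:symi2} of $\sym{j}$,'' which is precisely the rewriting $\xun\Dun{j}\xun = \xun^2\Dun{j} + \xun\comm{\Dun{j}}{\xun}$ together with $\comm{\Dun{j}}{\xun}=\comm{\DDop}{x_j}=e_j+2\varepsilon\sym{j}$ and $\acomm{\DDop}{\xun}=2\varepsilon H$ that you carry out. Apart from the crossed-out first attempt (which you immediately correct), there is nothing to add.
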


We now consider some of the main properties of $\zz_j$ that are useful for our purposes.

\begin{proposition}\label{prop:gensym}
	The operator $\zz_j$ is a generalised symmetry of the Dunkl--Dirac operator
	\begin{equation}
		\comm{\DDop}{\zz_j} = 2\varepsilon x_j\DDop.	
	\end{equation}
\end{proposition}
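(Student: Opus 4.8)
The plan is to compute $\comm{\DDop}{\zz_j}$ directly, starting from the defining expression $\zz_j = 2\varepsilon x_j H - \xun\Dun{j}\xun$ and using the $\osp(1|2)$-type relations collected in \eqref{eq:osprel}, \eqref{eq:osprel2} together with the commutator identity \eqref{eq:DDx}. Since $\DDop$ is an odd element, one must be careful: $\comm{\DDop}{\cdot}$ is an ordinary commutator (not a supercommutator), but it interacts with the anticommutator $\acomm{\DDop}{\xun} = 2\varepsilon H$ via the standard ``Leibniz'' manoeuvre $\DDop\,\xun = 2\varepsilon H - \xun\DDop$. So the first step is to treat the two summands of $\zz_j$ separately.

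For the term $2\varepsilon x_j H$: I would move $\DDop$ past $x_j$ using $\DDop x_j = x_j\DDop + \comm{\DDop}{x_j}$, where $\comm{\DDop}{x_j} = e_j + 2\sym{j}/\varepsilon \cdot \varepsilon = e_j + 2\sum_\alpha \kappa(\alpha)\alpha_j\dsig{\alpha}$ by \eqref{eq:DDx}; then move $\DDop$ past $H$ using $\comm{\DDop}{H} = \DDop$ from \eqref{eq:osprel2}. This produces $2\varepsilon x_j H\DDop + 2\varepsilon x_j\DDop + 2\varepsilon\comm{\DDop}{x_j}H$. For the term $\xun\Dun{j}\xun$: the key observation is that $\Dun{j}\xun = \xun\Dun{j} + \comm{\Dun{j}}{\xun}$ and $\comm{\Dun{j}}{\xun} = \comm{\DDop}{x_j}$ by \eqref{eq:DDx}, so $\xun\Dun{j}\xun = \xun^2\Dun{j} + \xun\comm{\DDop}{x_j}$; alternatively one can keep it as is and push $\DDop$ in from the left, repeatedly using $\DDop\xun = 2\varepsilon H - \xun\DDop$ and $\acomm{\DDop}{\Dun{j}} = 0$ (which holds since Dunkl operators commute, so $\DDop\Dun{j} = \Dun{j}\DDop$, actually a \emph{commutator} vanishes here, not an anticommutator — one should double-check the parity bookkeeping, but $\DDop = \sum_k\Dun{k}e_k$ commutes with $\Dun{j}$ because the $\Dun{k}$ mutually commute and $e_k$ is a scalar relative to $\RCA$). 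Collecting everything, the terms involving $H\DDop$, $\xun\DDop$, and the ``leftover'' pieces should organise so that all contributions proportional to $\zz_j\DDop$ cancel against the expansion of $\zz_j\DDop$ itself, leaving exactly $2\varepsilon x_j\DDop$.

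Concretely, I expect the cleanest route is to use the form \eqref{eq:zzj}, namely $\zz_j = x_j\acomm{\DDop}{\xun} - \xun\comm{\DDop}{x_j} - \varepsilon\xsq\Dun{j}$, and compute $\comm{\DDop}{\zz_j}$ termwise: $\comm{\DDop}{x_j\acomm{\DDop}{\xun}} = \comm{\DDop}{2\varepsilon x_j H}$ handled as above; $\comm{\DDop}{\xun\comm{\DDop}{x_j}}$ using $\acomm{\DDop}{\xun} = 2\varepsilon H$ and that $\comm{\DDop}{x_j}$ is a combination of $e_j$ and the $\dsig{\alpha}$'s, for which \eqref{eq:dsiganticomm} gives $\acomm{\dsig{\alpha}}{\DDop} = 0$ and $\acomm{e_j}{\DDop} = 2\Dun{j}$ (from $\acomm{e_j}{e_k} = 2\varepsilon\delta_{jk}$, so $\acomm{e_j}{\DDop} = 2\varepsilon\Dun{j}$ — again a sign to verify); and $\comm{\DDop}{\varepsilon\xsq\Dun{j}}$ using $\comm{\DDop}{\xsq} = 2\xun$ from \eqref{eq:osprel2} and $\comm{\DDop}{\Dun{j}} = 0$. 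The main obstacle — really the only place where care is needed rather than routine algebra — is the parity bookkeeping: getting every sign right when commuting the odd operator $\DDop$ past the odd elements $\xun$, $e_j$, $\dsig{\alpha}$ versus the even elements $x_j$, $H$, $\xsq$, $\Dun{j}$, and making sure one uses anticommutators exactly where the product of two odd elements appears and commutators elsewhere. Once the signs are tracked correctly, the cancellation is forced and the right-hand side $2\varepsilon x_j\DDop$ drops out; I would present the computation as a short displayed chain of equalities mirroring the proof of Proposition~\ref{prop:maxgensymH}.
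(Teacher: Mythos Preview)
Your proposal is correct and takes essentially the same approach as the paper: a direct computation of $\DDop\,\zz_j$ using the $\osp(1|2)$ relations \eqref{eq:osprel}--\eqref{eq:osprel2} together with the key identity $\comm{\DDop}{x_j} = \comm{\Dun{j}}{\xun}$ from \eqref{eq:DDx}. The only cosmetic difference is that the paper keeps the term $\xun\Dun{j}\xun$ intact throughout (rather than expanding via \eqref{eq:zzj}), which slightly streamlines the bookkeeping you rightly flag as the main hazard.
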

\begin{proof}
	First we anticommute $\DDop$ and $\xun$ and commute $\DDop$ and $x_j$ by~\eqref{eq:osprel2}
	\begin{align*}
		\DDop \,\zz_j &= \DDop(x_j\acomm{\DDop}{\xun} - \xun\Dun{j}\xun) = (x_j\DDop + \comm{\DDop}{x_j})\acomm{\DDop}{\xun} - (-\xun\,\DDop + \acomm{\xun}{\DDop})\Dun{j}\xun,
		\intertext{now we employ~\eqref{eq:DDx} and the commutation of $\Dun{j}$}
		&= x_j\DDop\acomm{\DDop}{\xun} + \comm{\Dun{j}}{\xun}\acomm{\DDop}{\xun}  +\xun\Dun{j}\DDop\,\xun - \acomm{\DDop}{\xun}\Dun{j}\xun,
		\intertext{then we apply~\eqref{eq:HinDunkl2}, and anticommute a second time $\DDop$ and $\xun$} 
		&= x_j\acomm{\DDop}{\xun}\DDop + 2\varepsilon x_j\DDop + \comm{\Dun{j}}{\xun}\acomm{\DDop}{\xun} - \xun\Dun{j}\xun\,\DDop + \xun\,\Dun{j}\acomm{\DDop}{\xun} - \acomm{\DDop}{\xun}\Dun{j}\xun,
		\intertext{finally, $\acomm{\DDop}{\xun}$ commutes with $\Dun{j}\xun$ because of $\comm{H}{\Dun{j}} = -\Dun{j}$, $\comm{H}{\xun} = \xun$ and $\varepsilon^2=1$ so}
		&= (x_j\acomm{\DDop}{\xun} - \xun\Dun{j}\xun + 2\varepsilon x_j)\DDop  + \comm{\Dun{j}}{\xun}\acomm{\DDop}{\xun} - \Dun{j}\xun \acomm{\DDop}{\xun} + \xun\Dun{j}\acomm{\DDop}{\xun}\\
		&= \zz_j\DDop + 2\varepsilon x_j\DDop.\qedhere
	\end{align*}
\end{proof}

\begin{proposition}\label{prop:relofzandother}
	The operator $\zz_k$ respects the following commutation relations 
	\begin{align}
		\comm{\xun}{\zz_k} &= -2\varepsilon x_k\xun + \xun(e_k + 2\varepsilon\sym{k})\xun;\label{eq:zelxun}\\
		\comm{x_j}{\zz_k} &= -2\varepsilon x_jx_k - \xun\comm{x_j}{\Dun{k}}\xun;\label{eq:zrelx}\\
		\comm{e_j}{\zz_k} &= 2\varepsilon(\xun \Dun{k}x_j - x_j\Dun{k}\xun); \label{eq:zrele}\\
		\comm{\Dun{j}}{\zz_k} &= 2\varepsilon (x_k\Dun{j} - x_j\Dun{k})
		+ 2\varepsilon\comm{\Dun{j}}{x_k} H + e_j\comm{\xun}{\Dun{k}} - 2\varepsilon(\sym{j}\Dun{k}\xun + \xun \Dun{k} \sym{j});\label{eq:zrelD}\\
		\dsig{\alpha} \zz_k &=  \zz_{\sigma_{\alpha}(\xi_k)} \dsig{\alpha}, \quad \text{with} \quad \zz_{\sigma_{\alpha}(\xi_k)}:= \sum_{j=1}^d \bili{\sigma_{\alpha}(\xi_k)}{\xi_j}\zz_j.\label{eq:prop:relofzandothersig}
	\end{align}
\end{proposition}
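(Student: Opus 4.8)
The plan is to establish each of the five displayed relations by direct computation, using exclusively the commutator identities already assembled in the excerpt: the $\osp(1|2)$ relations \eqref{eq:osprel}--\eqref{eq:osprel2}, the mixed relations \eqref{eq:DDx}, the $\Lie{sl}_2$-relations \eqref{eq:HinDunkl2}, the Clifford anticommutation $\acomm{e_i}{e_j}=2\varepsilon\delta_{ij}$, and the alternative expressions for $\zz_k$ from Lemma~\ref{lem:zotherversions}. For the first three relations I would start from the form $\zz_k = 2\varepsilon x_k H - \xun\Dun{k}\xun$, since $H$, $x_k$, $\xun$ and $\Dun{k}$ all have clean commutators with $\xun$, $x_j$ and $e_j$. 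Concretely: for \eqref{eq:zelxun}, expand $\comm{\xun}{\zz_k}=2\varepsilon\comm{\xun}{x_k}H + 2\varepsilon x_k\comm{\xun}{H} - \comm{\xun}{\xun\Dun{k}\xun}$; the first term vanishes since $x_j$ are central among themselves, the second gives $-2\varepsilon x_k\xun$ by $\comm{\xun}{H}=-\xun$, and the last reduces to $\xun\comm{\xun}{\Dun{k}}\xun = -\xun\comm{\Dun{k}}{\xun}\xun = -\xun(e_k+2\varepsilon\sym{k})\xun$ using \eqref{eq:DDx} and \eqref{eq:symi2} (noting $\comm{\xun}{\xun}=0$, $\comm{\xun}{\xun}=0$ on the outer factors), which assembles to the claimed right-hand side. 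For \eqref{eq:zrelx}, only the $\xun\Dun{k}\xun$ term contributes non-trivially through $\comm{x_j}{\Dun{k}}$, plus $2\varepsilon\comm{x_j}{x_k}H=0$, and one must carry the $\comm{\xun}{x_j}$ terms — but $\comm{x_j}{\xun}=0$ since $x_j$ commutes with every $x_i$ and with every $e_i$, so the computation collapses to $-\xun\comm{x_j}{\Dun{k}}\xun$ minus a $2\varepsilon x_j x_k$ coming from $H$; wait — more carefully, $\comm{x_j}{2\varepsilon x_k H}=2\varepsilon x_k\comm{x_j}{H}=-2\varepsilon x_k x_j$ by $\comm{H}{x_j}=x_j$, giving the stated $-2\varepsilon x_j x_k$. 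For \eqref{eq:zrele}, $e_j$ commutes with $x_k$ and $H$, so $\comm{e_j}{\zz_k} = -\comm{e_j}{\xun\Dun{k}\xun} = -\comm{e_j}{\xun}\Dun{k}\xun - \xun\Dun{k}\comm{e_j}{\xun}$, and $\comm{e_j}{\xun}=\comm{e_j}{\sum_i x_i e_i}=\sum_i x_i\comm{e_j}{e_i}$; since $\comm{e_j}{e_i}=e_je_i-e_ie_i\delta\cdots$ — in fact $e_je_i = -e_ie_j$ for $i\neq j$ and $e_j^2=\varepsilon$, so $\comm{e_j}{e_i}=2e_je_i$ for $i\neq j$ and $0$ for $i=j$, hence $\comm{e_j}{\xun}=2\sum_{i\neq j}x_i e_j e_i = 2e_j\xun - 2\varepsilon x_j$; substituting and simplifying with $e_j\xun+\xun e_j = 2\varepsilon x_j$ should produce $2\varepsilon(\xun\Dun{k}x_j - x_j\Dun{k}\xun)$ after cancellation.

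The relation \eqref{eq:zrelD} is the hardest and I expect it to be the main obstacle, because $\Dun{j}$ fails to commute with all four building blocks: it interacts with $x_k$ via \eqref{eq:DDx} (equivalently \eqref{eq:commDx} componentwise), with $H$ via $\comm{H}{\Dun{j}}=-\Dun{j}$, with $\xun$ via $\comm{\Dun{j}}{\xun}=e_j+2\varepsilon\sym{j}$, and the resulting $\sym{j}$-terms are themselves $W$-valued and must be tracked carefully through the Clifford multiplications. My approach would be to write $\comm{\Dun{j}}{\zz_k} = 2\varepsilon\comm{\Dun{j}}{x_k}H + 2\varepsilon x_k\comm{\Dun{j}}{H} - \comm{\Dun{j}}{\xun}\Dun{k}\xun - \xun\comm{\Dun{j}}{\Dun{k}}\xun - \xun\Dun{k}\comm{\Dun{j}}{\xun}$, use $\comm{\Dun{j}}{\Dun{k}}=0$ to kill the middle term, $\comm{\Dun{j}}{H}=-\Dun{j}$ to turn the second term into $-2\varepsilon x_k\Dun{j}$, and then substitute $\comm{\Dun{j}}{\xun}=e_j+2\varepsilon\sym{j}$ into the two outer terms. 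The term $-2\varepsilon x_k\Dun{j}$ combines with a contribution from $\comm{\Dun{j}}{x_k}H$; using $\comm{\Dun{j}}{x_k}=\comm{\DDop}{x_k}$'s scalar part plus $2\varepsilon\sym{j}$-type corrections one isolates $2\varepsilon\comm{\Dun{j}}{x_k}H$ as in the statement, while the $-(e_j+2\varepsilon\sym{j})\Dun{k}\xun - \xun\Dun{k}(e_j+2\varepsilon\sym{j})$ piece must be reorganized — the $e_j$-halves assemble into $e_j\comm{\xun}{\Dun{k}}$ after using $\comm{\xun}{\Dun{k}}=-(e_k+2\varepsilon\sym{k})$ and pushing one $e_j$ through (this is where the $e_j\comm{\xun}{\Dun{k}}$ term of the statement comes from, at the cost of an extra $2\varepsilon(x_k\Dun{j}-x_j\Dun{k})$-type commutator term, since $e_j$ and $\Dun{k}\xun$ do not commute), and the $\sym{j}$-halves give exactly $-2\varepsilon(\sym{j}\Dun{k}\xun + \xun\Dun{k}\sym{j})$. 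The bookkeeping of which Clifford factor sits where, and the repeated use of $e_j\xun = 2\varepsilon x_j - \xun e_j$ to move generators past $\xun$, is the genuinely delicate part; I would organize it by first reducing everything to a normal form with all $e_j$'s pushed to the left of each $\xun$.

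The final relation \eqref{eq:prop:relofzandothersig} is best proved not by brute force but by exploiting equivariance. Since $\dsig{\alpha}=\alphu\sigma_\alpha$ anticommutes with both $\DDop$ and $\xun$ by \eqref{eq:dsiganticomm}, and since $H=\tfrac12\acomm{\DDop}{\xun}/\varepsilon$ (up to the factor $\varepsilon$ in \eqref{eq:osprel}) is therefore fixed by conjugation by $\dsig{\alpha}$, while $\sigma_\alpha x_j \sigma_\alpha^{-1} = \sum_i \bili{\sigma_\alpha(\xi_j)}{\xi_i}^{-1}\cdots$ — more precisely $\sigma_\alpha$ acts on the $x_j$ as the orthogonal reflection \eqref{eq:actionref} and similarly on the $\Dun{j}$ (the Dunkl operators transform covariantly under $W$), and $\alphu$ acts on the Clifford generators by the same reflection up to sign, one sees that $\zz_k = 2\varepsilon x_k H - \xun\Dun{k}\xun$ transforms under conjugation by $\dsig{\alpha}$ exactly as the vector $\xi_k$ transforms under $\sigma_\alpha$: the $\xun\Dun{k}\xun$ factor has its two outer $\xun$'s invariant (each $\xun$ anticommutes with $\alphu$ and is fixed by the $W$-action composed with the sign, so $\dsig{\alpha}\xun\dsig{\alpha}^{-1}=\xun$ up to the $\varepsilon$-normalization — here one uses $\dsig{\alpha}^{2}=\varepsilon$ and $\dsig{\alpha}\xun = -\xun\dsig{\alpha}$ hence $\dsig{\alpha}\xun\dsig{\alpha}^{-1}=\varepsilon^{-1}\dsig{\alpha}\xun\dsig{\alpha}=-\varepsilon^{-1}\xun\dsig{\alpha}^2 = -\xun$, and similarly $\dsig{\alpha}\DDop\dsig{\alpha}^{-1}=-\DDop$, so $\dsig{\alpha}H\dsig{\alpha}^{-1}=H$), while the middle $\Dun{k}$ transforms to $\sum_j\bili{\sigma_\alpha(\xi_k)}{\xi_j}\Dun{j}$ by $W$-covariance of the Dunkl operators, and $x_k$ similarly. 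Hence $\dsig{\alpha}\zz_k\dsig{\alpha}^{-1} = \sum_j\bili{\sigma_\alpha(\xi_k)}{\xi_j}\zz_j = \zz_{\sigma_\alpha(\xi_k)}$, which on multiplying by $\dsig{\alpha}$ on the right is the claimed identity. The only care needed is to confirm the $W$-covariance statement $\sigma_\alpha \Dun{k}\sigma_\alpha^{-1} = \sum_j\bili{\sigma_\alpha(\xi_k)}{\xi_j}\Dun{j}$, which is standard for Dunkl operators (it follows from the defining formula together with $\sigma_\alpha$ permuting $R^+$ up to sign and $\kappa$ being $W$-invariant), and that the extra signs from $\alphu$ versus $\sigma_\alpha$ cancel in the product $x_k H$ and in $\xun(\,\cdot\,)\xun$ because each carries an even number of Clifford generators relative to the conjugating element.
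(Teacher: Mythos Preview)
Your plan is essentially the paper's own proof: each identity is obtained by direct computation from the defining form $\zz_k = 2\varepsilon x_k H - \xun\Dun{k}\xun$ using the $\osp(1|2)$ relations, \eqref{eq:HinDunkl2}, \eqref{eq:DDx}, and the Clifford anticommutation (the paper phrases \eqref{eq:zrele} via $\acomm{e_j}{\xun}=2\varepsilon x_j$ rather than the commutator, but this is cosmetic), and \eqref{eq:prop:relofzandothersig} is dispatched exactly by the equivariance argument you outline. One slip to fix in execution: from $\comm{H}{\Dun{j}}=-\Dun{j}$ you get $\comm{\Dun{j}}{H}=+\Dun{j}$, so the $H$-term in \eqref{eq:zrelD} contributes $+2\varepsilon x_k\Dun{j}$, and the compensating $-2\varepsilon x_j\Dun{k}$ arises not from $\comm{\Dun{j}}{x_k}H$ but from moving $e_j$ past $\xun$ in the $-\xun\Dun{k}e_j$ piece via $\xun e_j = 2\varepsilon x_j - e_j\xun$.
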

\begin{proof}
	Equation~\eqref{eq:zelxun} follows from a small calculation using equations~\eqref{eq:HinDunkl2} and~\eqref{eq:osprel2}
	\begin{align*}
		\comm{\xun}{\zz_k} &= 2\varepsilon \xun x_k H - \xun\,\xun\Dun{k}\xun = 2\varepsilon x_k\xun H - \xun\Dun{k}\xun\,\xun - \xun\comm{\xun}{\Dun{k}}\xun - \zz_k\xun = -2\varepsilon x_k \xun + \xun(e_k+2\varepsilon\sym{k})\xun.
	\end{align*}
	
	Equation~\eqref{eq:zrelx} follows from the commutation relation~\eqref{eq:HinDunkl2} between $x_j$ and $H$:
	\begin{align*}
		\comm{x_j}{\zz_k} &= 2\varepsilon x_jx_k H - x_j\xun\Dun{k}\xun - 2\varepsilon \zz_kx_j= 2\eps x_kH x_j -2\varepsilon x_kx_j - (\xun\Dun{k}x_j\xun + \xun \comm{x_j}{\Dun{k}}\xun) - \zz_kx_j\\ &= -2\varepsilon x_jx_k - \xun\comm{x_j}{\Dun{k}}\xun.
	\end{align*}
	
	Equation~\eqref{eq:zrele} comes from $\acomm{e_j}{\xun} = 2\varepsilon x_j$:
	\begin{align*}
		e_j\zz_k &= 2\varepsilon e_j x_k H - e_j\xun\Dun{k}\xun = 2\varepsilon x_k H e_j + \xun e_j\Dun{k} \xun - 2\varepsilon x_j\Dun{k}\xun = \zz_ke_j + 2\varepsilon \xun\Dun{k} x_j - 2\varepsilon x_j\Dun{k}\xun.
	\end{align*}
	
	Slightly more tedious computations yield equation~\eqref{eq:zrelD}. First develop
	\begin{align*}
		\comm{\Dun{j}}{\zz_k} &=  2\varepsilon\Dun{j}x_k H - \Dun{j}\xun\Dun{k}\xun - \zz_k\Dun{j}\\
		&= 2\varepsilon x_k\Dun{j}H + 2\varepsilon \comm{\Dun{j}}{x_k}H - (\xun\Dun{j}\Dun{k}\xun + \comm{\Dun{j}}{\xun}\Dun{k}\xun) \\
		&\quad - (2\varepsilon x_kH\Dun{j} - \xun\Dun{k}\Dun{j}\xun - \xun\Dun{k}\comm{\xun}{\Dun{j}}),
		\intertext{then employ $\comm{H}{\Dun{j}} = -\Dun{j}$ and cancel some terms}
		&=2\varepsilon x_k\Dun{j} + 2\varepsilon\comm{\Dun{j}}{x_k}H - \comm{\Dun{j}}{\xun}\Dun{k}\xun + \xun\Dun{k}\comm{\xun}{\Dun{j}}\\
		&= 2\varepsilon x_k\Dun{j} +2\varepsilon \comm{\Dun{j}}{x_k}H  -(e_j + 2\varepsilon\sym{j})\Dun{k}\xun - \xun \Dun{k}(e_j+2\varepsilon\sym{j}),
		\intertext{now use $\acomm{\xun}{e_j}= 2\varepsilon x_j$ to get}
		&= 2\varepsilon x_k\Dun{j} + 2\varepsilon \comm{\Dun{j}}{x_k}H - e_j \Dun{k}\xun + e_j \xun\Dun{k} - 2\varepsilon x_j \Dun{k} - 2\varepsilon(\sym{j}\Dun{k}\xun + \xun \Dun{k}\sym{j})\\
		&= 2\varepsilon (x_k\Dun{j} - x_j\Dun{k}) + 2\varepsilon\comm{\Dun{j}}{x_k}H + e_j \comm{\xun}{\Dun{k}} - 2\varepsilon(\sym{j}\Dun{k}\xun + \xun \Dun{k}\sym{j}).
	\end{align*}
	Relation~\eqref{eq:prop:relofzandothersig} follows from~\eqref{eq:dsiganticomm}, and from the action of $W$ on $x_j$ and $\Dun{j}$.
\end{proof}

\begin{lemma}\label{lem:vectorzz}
The operator $\zzu:= \sum_{j=1}^\dN \zz_j e_j$ can be written as
\begin{equation}
\zzu = 2\eps \xun(\Euler +\gamma - \sum_{\alpha\in R^+} \kappa(\alpha)\sigma_{\alpha}) - \eps \xsq \DDop.
\end{equation}
\end{lemma}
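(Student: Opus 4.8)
The plan is to expand $\zzu = \sum_{j=1}^\dN \zz_j e_j$ starting from the definition $\zz_j = 2\varepsilon x_j H - \xun \Dun{j} \xun$, so that
\[
\zzu = 2\varepsilon \Bigl(\sum_{j=1}^\dN x_j e_j\Bigr) H - \xun \Bigl(\sum_{j=1}^\dN \Dun{j} e_j\Bigr)\xun = 2\varepsilon\, \xun\, H - \xun\, \DDop\, \xun,
\]
which already uses only the definitions of $\xun$ and $\DDop$ together with the fact that the scalars $x_j$, $H$ commute past the Clifford units. The task then reduces to rewriting $-\xun \DDop \xun$ in terms of $\DDop$ (pushed to the right) plus lower-order corrections. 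First I would use the anticommutator relation $\acomm{\DDop}{\xun} = 2\varepsilon H$ from~\eqref{eq:osprel} to write $\DDop\,\xun = 2\varepsilon H - \xun\,\DDop$, giving
\[
-\xun\,\DDop\,\xun = -\xun(2\varepsilon H - \xun\,\DDop) = -2\varepsilon\,\xun\,H + \xun^2\,\DDop = -2\varepsilon\,\xun\,H + \varepsilon\,\xsq\,\DDop,
\]
using $\xun^2 = \varepsilon\,\xsq$. Substituting back yields $\zzu = 2\varepsilon\,\xun\,H - 2\varepsilon\,\xun\,H + \varepsilon\,\xsq\,\DDop = \varepsilon\,\xsq\,\DDop$, which is visibly \emph{not} the claimed answer — so a more careful bookkeeping is needed, because $\DDop\,\xun \neq \xun\,\DDop + 2\varepsilon H$ as operators unless one is careful about which side $H$ sits on; in fact $\acomm{\DDop}{\xun}=2\varepsilon H$ is correct but then $-\xun\DDop\xun$ should be symmetrised differently.

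The cleaner route, which I would actually follow, is to keep $\zz_j$ in the intermediate form~\eqref{eq:lem:zotherversions3},
\[
\zz_j = 2\varepsilon x_j(\Euler + \dN/2 + \gamma) - \xun(e_j + 2\varepsilon\sym{j}) - \varepsilon\,\xsq\,\Dun{j},
\]
multiply on the right by $e_j$ and sum over $j$. The three pieces give, respectively, $2\varepsilon\,\xun(\Euler + \dN/2 + \gamma)$ (again since $\sum_j x_j e_j = \xun$ and the scalar operator passes through); $-\xun\sum_j(e_j + 2\varepsilon\sym{j})e_j = -\xun\bigl(\sum_j e_j^2 + 2\varepsilon\sum_j\sym{j}e_j\bigr) = -\xun\bigl(\dN\varepsilon + 2\varepsilon\sum_{\alpha\in R^+}\kappa(\alpha)\sigma_\alpha\bigr)$, where $e_j^2 = \varepsilon$ and Lemma~\ref{lem:ojejisomega} supplies $\sum_j \sym{j} e_j = \sum_{\alpha\in R^+}\kappa(\alpha)\sigma_\alpha$; and $-\varepsilon\,\xsq\sum_j \Dun{j} e_j = -\varepsilon\,\xsq\,\DDop$. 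Note the scalar $\varepsilon\dN$ term from the middle piece cancels the $\dN/2$ inside the first piece only partially: $2\varepsilon\,\xun(\dN/2) - \varepsilon\dN\,\xun = 0$. Collecting everything leaves
\[
\zzu = 2\varepsilon\,\xun(\Euler + \gamma) - 2\varepsilon\,\xun\sum_{\alpha\in R^+}\kappa(\alpha)\sigma_\alpha - \varepsilon\,\xsq\,\DDop = 2\varepsilon\,\xun\Bigl(\Euler + \gamma - \sum_{\alpha\in R^+}\kappa(\alpha)\sigma_\alpha\Bigr) - \varepsilon\,\xsq\,\DDop,
\]
which is exactly the asserted identity.

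The only subtlety — and the step I would double-check most carefully — is the commutation of the Clifford units past the $\RCA$-valued operators $x_j$, $\Euler$, $\xsq$, $\Dun{j}$: these all live in the $\RCA$-tensor factor and hence commute with the bare $e_j$'s in $1\otimes\Clif(\dN)$, but $\sym{j}$ contains Clifford elements (it lies in $\dcover W^\varepsilon$), so the product $\sym{j}e_j$ must be handled via Lemma~\ref{lem:ojejisomega} rather than by naive commutation. Once that is invoked correctly, the computation is a one-line bookkeeping of scalar coefficients, so I expect no real obstacle beyond keeping track of the $\varepsilon\dN/2$ versus $\varepsilon\dN$ cancellation.
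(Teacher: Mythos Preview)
Your second route---expanding $\zz_j$ via~\eqref{eq:lem:zotherversions3}, summing against $e_j$, and invoking Lemma~\ref{lem:ojejisomega} for the $\sum_j \sym{j}e_j$ piece---is correct and is exactly the paper's proof, including the cancellation of the $\dN/2$ contribution against $\varepsilon\dN\,\xun$.

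One small correction to your post-mortem on the first route: the mistake is not in how $H$ sits after applying $\acomm{\DDop}{\xun}=2\varepsilon H$; that step is fine. The error occurs earlier, when you write $\sum_j \xun\,\Dun{j}\,\xun\,e_j = \xun\,\DDop\,\xun$. The rightmost $\xun$ is Clifford-valued, so $e_j$ does not commute past it: one has $\xun e_j = -e_j\xun + 2\varepsilon x_j$, and the resulting correction term $2\varepsilon\,\xun\sum_j \Dun{j}x_j$ is exactly what restores the missing $\Euler$, $\gamma$, and $\sum_\alpha\kappa(\alpha)\sigma_\alpha$ contributions. So the first route is salvageable, just longer; your second route (and the paper's) avoids this by using a form of $\zz_j$ in which the only Clifford content to the right of the insertion point is already explicit.
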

\begin{proof}
We express $\zz_j$ by~\eqref{eq:lem:zotherversions3} and use Lemma~\ref{lem:ojejisomega}:
\begin{align*}
\zzu &= \sum_{j=1}^\dN (2\eps x_j(\Euler + \dN/2 + \gamma) - \xun(e_j + 2\eps \sym j ) - \eps \xsq \Dun{j})e_j\\
&= 2\eps \xun(\Euler + \dN/2 + \gamma) - \eps \xun \dN - 2\eps\xun \sum_{j=1}^{\dN} \sym j e_j - \eps \xsq \DDop = 2\eps \xun(\Euler +\gamma - \sum_{\alpha\in R^+} \kappa(\alpha)\sigma_{\alpha}) - \eps \xsq \DDop. \qedhere
\end{align*}
\end{proof}

\subsection{Kelvin transformation}

Define the Dunkl--Clifford--Kelvin transform $\KelvI$ as
\begin{equation}\label{eq:CDK}
	\KelvI f(x) = \xun \,\normxx{-(2\gamma + \dN)} f\left(\frac{x}{\xsq}\right).
\end{equation}
Since $\kappa \geq 0$, the sum over positive roots $\gamma$ is non-negative and $\normxx{-(2\gamma + \dN)}$ is thus well-defined. The operator $\KelvI$ is $\varepsilon$-idempotent, that is $\KelvI^2=\varepsilon$. Indeed, using $\xun\, \xun = \eps\normxx{2}$, 
\begin{equation}
	\KelvI\KelvI f(x) = \KelvI\left( \xun \normxx{-(2\gamma + \dN)} f\left(\frac{x}{\xsq}\right)\right)= \xun \normxx{-(2\gamma+\dN)} \left(\frac{\xun}{\xsq} \frac{\normxx{2(2\gamma +\dN)}}{\normxx{(2\gamma +\dN)}} f\left(\frac{x}{\xsq}\frac{\normxx{4}}{\xsq} \right)\right) 
	= \varepsilon f(x).
\end{equation}

The relation between the two Kelvin-type transforms~\eqref{eq:KelvK} and \eqref{eq:CDK} is 
\begin{equation}\label{eq:KelvKandKelvI}
	\KelvI f = \eps \xun \normxx{-2} \KelvK f.
\end{equation}
Remark that for $p(x)\in \Poly_n(\RR^\dN)$ we have $p(x/\xsq) = \normxx{-2n}p(x)$, and thus the action of the Dunkl--Clifford--Kelvin transform becomes
\begin{align}
	\KelvI p(x) &= \normxx{-(2\gamma +\dN + 2n)}\xun p(x).
\end{align}

The transform~\eqref{eq:CDK} was considered before, for example see~\cite{yacoub} and~\cite{fei_fueters_2009}. One of the main results of those two papers is to prove that, for any polynomial monogenic $f$, also $\KelvI\Dun{j}\KelvI(f)$ is a polynomial monogenic. We give an interpretation in terms of generalised symmetries of the Dunkl--Dirac operator.

\begin{proposition}\label{prop:zkelv}
For $\beta\in \NN^\dN$ with $|\beta|_1 = n$, when acting on $\Poly\otimes V$,
	\begin{equation}\label{eq:coro:polyKI}
		\zz_j = -\KelvI\Dun{j}\KelvI,\quad\text{and} \quad \zz^{\beta}= (-1)^n\eps^{n-1} \KelvI\Dun{}^{\beta}\KelvI.
	\end{equation}
\end{proposition}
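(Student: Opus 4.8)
The plan is to mimic the strategy used for the Dunkl--Laplace case in Proposition~\ref{prop:maxDIH}: by linearity it suffices to establish $\zz_j = -\KelvI\Dun{j}\KelvI$ on a homogeneous spinor-valued polynomial $p \in \Poly_n(\RR^\dN)\otimes V$, and then iterate, tracking the powers of $\varepsilon$ that arise from $\KelvI^2 = \varepsilon$. First I would use the explicit formula $\KelvI p(x) = \normxx{-(2\gamma+\dN+2n)}\xun\, p(x)$ for $p\in\Poly_n\otimes V$, together with the fact that $\xun\, p$ has degree of homogeneity $n+1$, to compute $\Dun{j}\KelvI p$ via the Dunkl--Leibniz rule~\eqref{eq:dunklleibniz}. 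The point where the spinor/Clifford structure genuinely enters is $\Dun{j}(\xun\, p)$: here one must be careful because $\Dun{j}$ does not simply commute past $\xun$, and $\Dun{j}(\xun\, p) = (\comm{\Dun{j}}{\xun})p + \xun\Dun{j}p = (e_j + 2\varepsilon\sym{j})p + \xun\Dun{j}p$ by~\eqref{eq:DDx} and~\eqref{eq:symi2}. So I would write
\begin{align*}
	\Dun{j}\KelvI p &= \Dun{j}\left(\normxx{-(2\gamma+\dN+2n)}\xun\, p\right)\\
	&= \normxx{-(2\gamma+\dN+2n)}\bigl((e_j+2\varepsilon\sym{j})p + \xun\Dun{j}p\bigr) - (2\gamma+\dN+2n)\normxx{-(2\gamma+\dN+2n+2)}x_j\,\xun\, p.
\end{align*}

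Next, both terms on the right are homogeneous of degree $-(2\gamma+\dN+2n)+n+1 = -(2\gamma+\dN+n-1)$, so I can apply $\KelvI$ again, now using $\KelvI g(x) = \xun\,\normxx{-(2\gamma+\dN)}g(x/\xsq)$ and evaluating $g(x/\xsq) = \normxx{2(2\gamma+\dN+n-1)}g(x)$ for such $g$. After multiplying through, the two $\normxx{}$ prefactors combine with the inversion-generated factor, the leading $\xun$ from the outer $\KelvI$ meets the $\xun$ left over inside (giving $\xun^2 = \varepsilon\xsq$ where appropriate, and a single $\xun$ where the inner term already carried one), and one collects
\[
	\KelvI\Dun{j}\KelvI p = \xun\bigl((e_j+2\varepsilon\sym{j})p + \xun\Dun{j}p\bigr) - (2\gamma+\dN+2n)\,x_j\,\xun\,\xun\, p\,,
\]
up to the bookkeeping of $|x|$-powers which I expect to cancel exactly as in Proposition~\ref{prop:maxDIH}. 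Using $\xun\,\xun = \varepsilon\xsq$, $\xun e_j = -e_j\xun + 2\varepsilon x_j$, and $\Euler p = np$, this should simplify to $-\bigl(2\varepsilon x_j(\Euler+\dN/2+\gamma) - \xun(e_j+2\varepsilon\sym{j}) - \varepsilon\xsq\Dun{j}\bigr)p = -\zz_j p$, matching expression~\eqref{eq:lem:zotherversions3} of Lemma~\ref{lem:zotherversions}. The main obstacle is precisely this last algebraic reduction: keeping the Clifford reorderings ($\xun e_j$ versus $e_j\xun$) and the $\sym{j}$-terms straight while confirming that the numerical coefficient from $(2\gamma+\dN+2n)$ together with the $\xun^2$-contribution reproduces exactly $2\varepsilon x_j H$.

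Finally, for the iterated statement I would argue inductively: $\zz^\beta = \zz_1^{\beta_1}\cdots\zz_\dN^{\beta_\dN}$ and each $\zz_{i} = -\KelvI\Dun{i}\KelvI$ on all of $\Poly\otimes V$, so composing $n = |\beta|_1$ copies gives $n$ factors of $-\KelvI$ on the outside and inside telescoping, with $n-1$ internal pairs $\KelvI\KelvI = \varepsilon$; this yields $\zz^\beta = (-1)^n\varepsilon^{\,n-1}\KelvI\Dun{1}^{\beta_1}\cdots\Dun{\dN}^{\beta_\dN}\KelvI = (-1)^n\varepsilon^{\,n-1}\KelvI\Dun{}^{\beta}\KelvI$, using that the Dunkl operators commute so the product is genuinely $\Dun{}^\beta$. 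One subtlety to flag is that each application of $\zz_i = -\KelvI\Dun{i}\KelvI$ must be justified on the intermediate (no longer necessarily homogeneous, but still polynomial) outputs; since the identity was proved on all of $\Poly\otimes V$ by linearity, this is automatic, and the degree-dependent prefactors take care of themselves within each homogeneous component.
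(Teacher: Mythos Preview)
Your proposal is correct and follows the same overall strategy as the paper: compute $\KelvI\Dun{j}\KelvI$ on a homogeneous $p\in\Poly_n\otimes V$ by applying the Dunkl--Leibniz rule~\eqref{eq:dunklleibniz} to the $\normxx{}$-factor, then reapply $\KelvI$ using homogeneity, and finally telescope for the multi-index version via $\KelvI^2=\varepsilon$.

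The one substantive difference is that you expand $\Dun{j}(\xun p) = (e_j+2\varepsilon\sym{j})p + \xun\Dun{j}p$ and aim for the alternative form~\eqref{eq:lem:zotherversions3}, whereas the paper simply keeps $\Dun{j}\xun\, p$ as a single block. After the second $\KelvI$, the paper obtains directly
\[
\KelvI\Dun{j}\KelvI p = -2\varepsilon(n+\dN/2+\gamma)x_j\,p + \xun\,\Dun{j}\xun\, p,
\]
which is visibly $-\zz_j p$ by the \emph{definition}~\eqref{eq:Z}. Your detour through the commutator is harmless but creates the very ``obstacle'' you flag: in fact no Clifford reordering such as $\xun e_j = -e_j\xun + 2\varepsilon x_j$ is ever needed, because your intermediate expression $\xun(e_j+2\varepsilon\sym{j})p + \varepsilon\xsq\Dun{j}p - 2\varepsilon(n+\dN/2+\gamma)x_j p$ already \emph{is} $-\zz_j p$ in the form~\eqref{eq:lem:zotherversions3}. (Also, your displayed intermediate has a stray $\xsq$: the second term should be $-(2\gamma+\dN+2n)\,\xun\,\normxx{-2}x_j\,\xun\, p = -\varepsilon(2\gamma+\dN+2n)x_j p$, not $x_j\xun\,\xun\,p$; you acknowledge this as bookkeeping, and indeed it resolves cleanly.) The iterated statement is handled identically in both.
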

\begin{proof}
	Let $p\in \Poly_n(\RR^\dN)$ be a homogeneous polynomial of degree $n$. Apply equation~\eqref{eq:dunklleibniz} to get
	\begin{align*}
		\Dun{j}\KelvI p(x) & = \Dun{j}\normxx{-(2\gamma +\dN + 2n)}\xun p(x)\\
		&=  -(2\gamma + \dN + 2n)\normxx{-(2\gamma +\dN + 2n +2)}x_j\xun p(x) + \normxx{-(2\gamma +\dN + 2n)}\Dun{j}\xun p(x).
	\end{align*}
	Remark now that the first and second terms have degree of homogeneity $-2\gamma - \dN - n$. Thus applying another time the Dunkl--Clifford--Kelvin transform yields
	\begin{align*}
		\KelvI\Dun{j}\KelvI p(x) & = -(2\gamma + \dN + 2n)\xun \normxx{-(2\gamma +\dN - 4\gamma -2\dN - 2n)}\normxx{-(2\gamma +\dN + 2n +2)}x_j\xun p(x)\\
		&\qquad+ \xun \normxx{-(2\gamma +\dN - 4\gamma - 2\dN - 2n)}\normxx{-(2\gamma +\dN + 2n)}\Dun{j}\xun p(x)\\
		&= -(2\gamma + \dN + 2n)\xun^2 \normxx{-2} x_j p(x) + \xun\Dun{j}\xun p(x)\\
		&= -2\varepsilon (n + \dN/2 + \gamma) x_j p(x) + \xun \Dun{j} \xun p(x),
	\end{align*}
	which equals $-\zz_j p(x) = -(2\varepsilon x_j(\Euler + \dN/2 + \gamma)  - \xun\Dun{j}\xun)p(x)$.
\end{proof}

The commutativity of the Dunkl operators then implies directly that of the $\zz_j$.
\begin{proposition}\label{prop:commZ}
	The operators $\zz_j$ commute amongst themselves, when acting on $\Poly\otimes V$, namely
	\begin{equation}\label{eq:commz}
		\comm{\zz_j}{\zz_{\ell}} = 0.
	\end{equation}
\end{proposition}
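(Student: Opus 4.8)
The plan is to deduce the commutativity of the $\zz_j$ directly from Proposition~\ref{prop:zkelv} together with the commutativity of the Dunkl operators, exactly in the spirit of the proof of Proposition~\ref{prop:commm}. First I would recall that, when acting on $\Poly\otimes V$, we have the identity $\zz_j = -\KelvI\Dun{j}\KelvI$ from Proposition~\ref{prop:zkelv}, and that $\KelvI$ is $\varepsilon$-idempotent, i.e.\ $\KelvI^2 = \varepsilon$. Composing two such operators, the middle factor $\KelvI\KelvI$ collapses to multiplication by the scalar $\varepsilon$, which is central, so it can be pulled out freely.

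Concretely, I would write the chain
\begin{equation*}
\zz_j\zz_\ell = (-\KelvI\Dun{j}\KelvI)(-\KelvI\Dun{\ell}\KelvI) = \KelvI\Dun{j}(\KelvI\KelvI)\Dun{\ell}\KelvI = \varepsilon\,\KelvI\Dun{j}\Dun{\ell}\KelvI = \varepsilon\,\KelvI\Dun{\ell}\Dun{j}\KelvI = \zz_\ell\zz_j,
\end{equation*}
where the crucial middle equality is the commutativity $\comm{\Dun{j}}{\Dun{\ell}}=0$ of the Dunkl operators, and the first and last equalities simply reverse the factorisation. This immediately gives $\comm{\zz_j}{\zz_\ell}=0$ on $\Poly\otimes V$.

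A small point of care is that the identity $\zz_j = -\KelvI\Dun{j}\KelvI$ in Proposition~\ref{prop:zkelv} is an equality of operators \emph{acting on $\Poly\otimes V$}, not an identity in the abstract algebra $\RCA\otimes\Clif(\dN)$; but since the conclusion $\comm{\zz_j}{\zz_\ell}=0$ is also only claimed on $\Poly\otimes V$, there is no difficulty: every operator appearing in the chain above is a well-defined linear endomorphism of $\Poly\otimes V$ (using that $\KelvI$ maps $\Poly_n\otimes V$ into $\Poly$ for each $n$, as recorded just before the proposition), and the Dunkl operators genuinely commute as endomorphisms of $\Poly$. I do not anticipate a real obstacle here; if anything, the only thing to double-check is that the scalar $\varepsilon^{\pm1}$ bookkeeping is consistent, which it is since $\varepsilon^2=1$. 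One could alternatively note that this is the $\beta$ with $|\beta|_1=2$ instance of the second formula in Proposition~\ref{prop:zkelv}, applied to $\Dun{}^{\beta}$ versus the reordered product $\Dun{\ell}\Dun{j}$, but the direct three-line computation above is cleaner.
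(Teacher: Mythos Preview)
Your proof is correct and essentially identical to the paper's: both invoke Proposition~\ref{prop:zkelv} to write $\zz_j = -\KelvI\Dun{j}\KelvI$, collapse the middle $\KelvI\KelvI$ to $\varepsilon$, and then use the commutativity of the Dunkl operators. One small imprecision in your side remark: $\KelvI$ does \emph{not} map $\Poly_n\otimes V$ into $\Poly\otimes V$ (the image carries a factor $\normxx{-(2\gamma+\dN+2n)}$), but this is harmless since the intermediate expressions are well-defined as endomorphisms of smooth functions and the full composite is known to land back in $\Poly\otimes V$.
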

\begin{proof}
Apply Proposition~\ref{prop:zkelv} and the $\eps$-idempotence of $\KelvI$:
\begin{equation}
 	\zz_j\zz_{\ell} = \KelvI\Dun{j}\KelvI\KelvI\Dun{\ell}\KelvI = \eps \KelvI\Dun{j}\Dun{\ell}\KelvI = \eps\KelvI\Dun{\ell}\Dun{j}\KelvI = \eps^2\KelvI\Dun{\ell}\KelvI\KelvI\Dun{j}\KelvI = \zz_{\ell}\zz_j.\qedhere
 \end{equation}
\end{proof}

\begin{remark}
Proposition~\ref{prop:commZ} also holds in $\RCA\otimes \Clif(\dN)$. Furthermore, 
it even holds in the general abstract setting considered in~\cite{de_bie_algebra_2018}. It follows by direct computations using~\eqref{eq:HinDunkl2} and the $\osp(1|2)$ relations~(\ref{eq:osprel}--\ref{eq:osprel2}).  
\end{remark}

\subsection{Projection operator relation}

\begin{proposition}\label{prop:zproj}
With $H$ given by~\eqref{eq:HinDunkl} and $x_j$ the operator that multiplies a polynomial by $x_j$, when acting on 	$\Mono$, we have
	\begin{equation}
		\zz_j = 2\eps  (H-1) \circ \proj_{\Mono}^{\Poly\otimes V} \circ\, x_j\,.
	\end{equation}
\end{proposition}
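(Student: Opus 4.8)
The plan is to mirror the argument of Proposition~\ref{prop:mproj} (the $\Lie{sl}(2)$ analogue for Dunkl harmonics), but using the $\osp(1|2)$ extremal projector $\proj_{\Mono}^{\Poly\otimes V}$ together with the factorisation~\eqref{eq:projproj}. First I would fix a Dunkl monogenic $M_{n-1}\in\Mono_{n-1}$ and analyse $x_j M_{n-1}\in\Poly_n\otimes V$. The key observation is that $\DDop(x_j M_{n-1}) = \comm{\DDop}{x_j}M_{n-1}$ since $\DDop M_{n-1}=0$, and hence $\DDop^{2}(x_j M_{n-1})$ is, up to $\eps$, $\DLapl(x_j M_{n-1}) = \comm{\DLapl}{x_j}M_{n-1} = 2\Dun{j}M_{n-1}$, which has degree $n-2$; consequently all the higher-order terms $\DDop^{2\ell+1}$ and $\DLapl^{\ell}$ in the explicit formula~\eqref{eq:projpi} with $\ell\geq 1$ annihilate $x_jM_{n-1}$ except the ones involving a single application of $\DLapl$ or of $\DDop^{2\ell+1}$ with $\ell=0$. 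So the projection collapses to at most three surviving terms and becomes a short closed expression.

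Concretely, applying~\eqref{eq:projpi} to $p=x_jM_{n-1}$ I expect to be left with
\[
\proj_{\Mono_n}^{\Poly_n\otimes V}(x_jM_{n-1}) = x_jM_{n-1} - \frac{\eps\,\xun\,\comm{\DDop}{x_j}M_{n-1}}{2(n-1+\dN/2+\gamma)} - \frac{\eps\,\xsq\,\Dun{j}M_{n-1}}{2(n-1+\dN/2+\gamma)}\cdot(\text{some correction}),
\]
after which I would multiply on the left by $2\eps H$, noting that $H$ acts on $\Poly_n\otimes V$ and on $\Poly_{n-1}\otimes V$, etc., as multiplication by the appropriate scalar $n+\dN/2+\gamma$ (resp.\ $n-1+\dN/2+\gamma$); the Pochhammer-type denominators are designed to cancel against these eigenvalues. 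The remaining manipulation is to recognise $x_j\comm{\DDop}{x_j} = x_j(e_j+2\eps\sym{j})$ (from~\eqref{eq:DDx} and~\eqref{eq:symi2}) and to use $\xun\,\xun=\eps\xsq$ to rewrite $\xun\comm{\DDop}{x_j}M_{n-1}$ and $\xsq\Dun{j}M_{n-1}$ into the combination appearing in~\eqref{eq:zzj}, namely $\zz_j = x_j\acomm{\DDop}{\xun} - \xun\comm{\DDop}{x_j} - \eps\xsq\Dun{j}$; since $\acomm{\DDop}{\xun}M_{n-1} = 2\eps H M_{n-1} = 2\eps(n-1+\dN/2+\gamma)M_{n-1}$, the term $x_j\acomm{\DDop}{\xun}M_{n-1}$ is exactly what the prefactor $2\eps H$ times $x_jM_{n-1}$ produces.

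Alternatively — and this may be the cleaner route — I would use~\eqref{eq:projproj} to write $\proj_{\Mono}^{\Poly\otimes V} = \proj_{\Mono}^{\Harmo\otimes V}\proj_{\Harmo}^{\Poly}$, combine Proposition~\ref{prop:mproj} (which already gives $\mm_j = 2(H-2)\proj_{\Harmo}^{\Poly}x_j$ on $\Harmo$) with the explicit one-term shape of $\proj_{\Mono}^{\Harmo\otimes V}$ in~\eqref{eq:projHM}, and track how $\zz_j$ decomposes relative to $\mm_j$. In either approach the main obstacle is purely bookkeeping: keeping the degree-shift in the Pochhammer denominators aligned with the $H$-eigenvalues and making sure the Clifford factor $\xun$ is moved past $H$ correctly (recall $\comm{\xun}{H}=-\xun$, so $H$ does not simply pass through). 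Once the surviving terms are identified and the scalars matched, the identity is forced by comparing with expression~\eqref{eq:zzj} for $\zz_j$.
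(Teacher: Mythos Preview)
Your primary approach is essentially the same as the paper's: fix $M_{n-1}\in\Mono_{n-1}$, observe that $\DDop^k(x_jM_{n-1})=0$ for $k\ge 3$ (equivalently, $\DLapl^{\ell}$ and $\DDop^{2\ell+1}$ annihilate $x_jM_{n-1}$ for $\ell\ge 1$), so that only the identity term, the $j=0$ term of the first sum, and the $j=1$ term of the second sum in~\eqref{eq:projpi} survive; then multiply by $2\eps H$ and match the resulting three-term expression against~\eqref{eq:zzj} using $\DDop(x_jM_{n-1})=\comm{\DDop}{x_j}M_{n-1}$ and $\DLapl(x_jM_{n-1})=2\Dun{j}M_{n-1}$. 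The paper carries this out in exactly these steps and does not use your alternative route via the factorisation~\eqref{eq:projproj} and Proposition~\ref{prop:mproj} here (that factorisation is instead the tool for the following Proposition~\ref{prop:zandproj}).
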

\begin{proof}
	Let $M_{n}\in\Mono_{n}$, then $\DDop^{k} x_jM_{n}=0$ for $k\geq 3$, since $\DDop^3 x_j M_{n} = \eps\DDop\Lapl_{\kappa} x_jM_{n} = \DDop x_j \Lapl_{\kappa} M_{n} - \DDop\Dun{j}M_{n} =0$, so using~\eqref{eq:projpi} we have
	\begin{align*}
2\eps  (H-1)	\proj_{\Mono}^{\Poly\otimes V} ( x_j M_{n})
&=  2\eps  (\Euler + \dN/2 + \gamma-1) \proj_{\Mono_{n+1}}^{\Poly_{n+1}\otimes V} ( x_j M_{n})\\
		&=  2\eps  (n + \dN/2 + \gamma) x_j M_{n} -  \xun\,\DDop (x_j M_{n}) - \eps\normxx{2}\DLapl (x_j M_{n})/2\\
		&=  2\eps  (n + \dN/2 + \gamma) x_jM_{n}  -  \xun\comm{\DDop}{x_j}M_{n} -\eps\normxx{2}\Dun{j} M_{n},
	\end{align*}
where we used 	$\comm{\DLapl}{x_j} = 2 \Dun{j}$ and $\DDop M_{n} = 0 = \DLapl M_{n-1}$.
	The last line is precisely~\eqref{eq:zzj} acting on $M_n$.
\end{proof}

The next result is related to \cite[Prop~4.2]{yacoub} where the Dunkl--Clifford--Kelvin transform is used.
	\begin{proposition}\label{prop:zandproj}
		Let $\beta \in \NN^\dN$ with $|\beta|_1 =n$ and $x^{\beta} \in \Poly_n$. Then, acting on $ V$
		\begin{equation}
		\zz^{\beta}=  \eps^n2^n(\gamma + \dN/2)_n		\proj_{\Mono_n}^{\Poly_n\otimes V}\circ\,x^{\beta}.
		\end{equation}
	\end{proposition}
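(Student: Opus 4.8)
The plan is to prove the identity $\zz^{\beta} = \eps^n 2^n(\gamma + \dN/2)_n \proj_{\Mono_n}^{\Poly_n \otimes V} \circ x^\beta$ by comparing $\zz^\beta$ acting on the constant spinors $V$ with the monogenic projection of the monomials $x^\beta$, using the factorisation through the Kelvin transform already established in Proposition~\ref{prop:zkelv}. First I would note that by Proposition~\ref{prop:zkelv}, acting on $\Poly \otimes V$ we have $\zz^\beta = (-1)^n \eps^{n-1} \KelvI \Dun{}^\beta \KelvI$. Applying this to a constant spinor $v \in V$, the inner $\KelvI$ gives $\KelvI(v) = \normxx{-(2\gamma+\dN)} \xun\, v$ (since $v$ is homogeneous of degree $0$), then $\Dun{}^\beta$ hits this radial-times-$\xun$ expression, and the outer $\KelvI$ is applied to the result. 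So the first key step is to compute $\Dun{}^\beta\bigl(\normxx{-(2\gamma+\dN)} \xun\, v\bigr)$, or rather to track it just enough to identify the final answer after the outer Kelvin transform.

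The cleaner route, which I would actually follow, is to combine Proposition~\ref{prop:zkelv} with Proposition~\ref{prop:zproj} inductively, or better, to mimic the harmonic analogue: recall that Theorem~\ref{thm:xu} (Xu) gives exactly the scalar statement $H_\beta(x) = (-1)^n 2^n (\gamma - 1 + \dN/2)_n \proj_{\Harmo_n}^{\Poly_n}(x^\beta)$, and $\mm^\beta(1) = (-1)^n H_\beta(x)$ via Proposition~\ref{prop:maxDIH}. So the monogenic statement should be the Clifford lift of Xu's theorem. Concretely, I would use the extremal-projector factorisation \eqref{eq:projproj}, $\proj_{\Mono}^{\Poly \otimes V} = \proj_{\Mono}^{\Harmo \otimes V} \proj_{\Harmo \otimes V}^{\Poly \otimes V}$, together with the relation \eqref{eq:KelvKandKelvI} between the two Kelvin transforms, $\KelvI f = \eps\, \xun \normxx{-2} \KelvK f$. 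Writing $\zz^\beta v = (-1)^n \eps^{n-1} \KelvI \Dun{}^\beta \KelvI v$ and substituting \eqref{eq:KelvKandKelvI} on both occurrences of $\KelvI$, the scalar Kelvin operators $\KelvK$ rearrange (they commute past the radial prefactors up to degree bookkeeping, exactly as in the proof of Proposition~\ref{prop:maxDIH}) so that $\Dun{}^\beta$ becomes sandwiched as $\KelvK \Dun{}^\beta \KelvK = (-1)^n \mm^\beta$ acting on $\Poly$; the leftover Clifford/radial factors assemble into a single $\xun$ and the correct power of $|x|$, i.e.\ into the operator $\xun \normxx{\cdot}$ that, combined with the projection identity of Proposition~\ref{prop:zproj} applied degree by degree, reproduces $\proj_{\Mono_n}^{\Harmo_n \otimes V} \circ \proj_{\Harmo_n}^{\Poly_n}$.

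Assembling the constants is then the final bookkeeping step: Xu's scalar constant is $(-1)^n 2^n(\gamma-1+\dN/2)_n$ (from Theorem~\ref{thm:xu}), and passing from $\proj_{\Harmo_n}^{\Poly_n}$ to $\proj_{\Mono_n}^{\Poly_n \otimes V}$ via \eqref{eq:projproj} together with the explicit $\proj_{\Mono_n}^{\Harmo_n \otimes V}$ in \eqref{eq:projHM} contributes, after tracking the $\eps$'s and the shift in the Pochhammer index coming from the extra factor of $\xun$ (degree $+1$), the replacement $(\gamma-1+\dN/2)_n \rightsquigarrow (\gamma + \dN/2)_n$ and an overall $\eps^n$ in place of $(-1)^n$; the signs $(-1)^n \eps^{n-1}$ from Proposition~\ref{prop:zkelv} absorb the remaining discrepancy. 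An alternative, fully self-contained argument avoiding \eqref{eq:projproj}: prove the statement by induction on $n=|\beta|_1$ using $\zz^\beta = \zz_k \zz^{\beta'}$ (valid since the $\zz_j$ commute on $\Poly \otimes V$ by Proposition~\ref{prop:commZ}), Proposition~\ref{prop:zproj} to rewrite $\zz_k = 2\eps H \circ \proj_{\Mono}^{\Poly\otimes V} \circ x_k$, and the recursion $(\gamma+\dN/2)_n = (\gamma + \dN/2)_{n-1}(\gamma + \dN/2 + n - 1)$ matched against the eigenvalue of $2\eps H$ on $\Mono_n$, which is $2\eps(n + \dN/2 + \gamma)$ — here one must be careful that the index shift is by $n-1$, not $n$, because $\proj_{\Mono}^{\Poly\otimes V}$ lands in $\Mono_n$ and $H$ is evaluated there.

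The main obstacle I anticipate is precisely this constant/index bookkeeping: keeping the Pochhammer arguments straight through the two Kelvin transforms (each shifting homogeneity degree by a $\gamma$- and $\dN$-dependent amount, and the Clifford factor $\xun$ shifting by $1$), and making sure the $\eps$ powers from $\KelvI^2 = \eps$ and from $\xun^2 = \eps\normxx{2}$ combine to exactly $\eps^n$. The operator manipulations themselves are routine given Propositions~\ref{prop:zkelv}, \ref{prop:commZ}, \ref{prop:zproj} and the relation \eqref{eq:projproj}; the risk is an off-by-one in the Pochhammer index or a stray sign, so I would double-check the $n=1$ case (where $\zz_k v = -\KelvI \Dun{k}\KelvI v$ should equal $\eps\, 2(\gamma + \dN/2)\, \proj_{\Mono_1}^{\Poly_1 \otimes V}(x_k v)$) directly against Proposition~\ref{prop:zkelv}'s explicit computation, which gives $\zz_k v = -2\eps(\dN/2 + \gamma)x_k v + \xun \Dun{k}\xun v$, and against \eqref{eq:projpi} at $n=1$.
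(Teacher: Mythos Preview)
Your main ``cleaner route'' is exactly the paper's argument: factor the projection via \eqref{eq:projproj}, apply Xu's Theorem~\ref{thm:xu} for the harmonic part, then apply the explicit $\proj_{\Mono_n}^{\Harmo_n\otimes V}$ from \eqref{eq:projHM} to $H_\beta s$, unwind using the relation \eqref{eq:KelvKandKelvI} and the Leibniz rule \eqref{eq:dunklleibniz}, and conclude by Proposition~\ref{prop:zkelv}. The paper carries out this computation line by line; you have identified all the same ingredients in the same logical order.

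Your inductive alternative via Proposition~\ref{prop:zproj} is a genuinely different route, but the off-by-one you flag is real, not just bookkeeping. The recursion $(\gamma+\dN/2)_n = (\gamma+\dN/2)_{n-1}(\gamma+\dN/2+n-1)$ requires the factor $2\eps(n-1+\dN/2+\gamma)$ at the inductive step, whereas $2\eps H$ evaluated on $\Mono_n$ gives $2\eps(n+\dN/2+\gamma)$. A direct check at $n=1$ (which you suggest) confirms $\zz_k v = 2\eps(\gamma+\dN/2)\proj_{\Mono_1}(x_k v)$, i.e.\ the correct scalar on $\Mono_n$ is $2\eps(H-1)$, not $2\eps H$; so Proposition~\ref{prop:zproj} as stated appears to carry an off-by-one, and your induction does not close with it as written. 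You would also need the (easy but unstated) identity $\proj_{\Mono_n}\circ x_k\circ\proj_{\Mono_{n-1}} = \proj_{\Mono_n}\circ x_k$ on $\Poly_{n-1}\otimes V$, which follows since the complement of $\Mono_{n-1}$ in the Fischer decomposition lies in $\xun\,\Poly_{n-2}\otimes V$ and $x_k$ commutes with $\xun$. With those two fixes the inductive proof does go through and is arguably more transparent than the Kelvin-transform computation, but as you wrote it the primary route is the safe one and matches the paper.
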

	\begin{proof}
	By \eqref{eq:projproj}, we write 
	\begin{equation}
	\proj_{\Mono_n}^{\Poly_n\otimes V} =\proj_{\Mono_n}^{\Harmo_n\otimes V} \proj_{\Harmo_n\otimes V}^{\Poly_n\otimes V} .
	\end{equation}
	Let $s\in V$, then by \eqref{eq:xu} (\cite[Theorem~2.4]{xu_harmonic_2000})
	\[
	\proj_{\Harmo_n\otimes V}^{\Poly_n\otimes V} (x^{\beta}s )=(-1)^n H_\beta \, s /( 2^n(\gamma -1 + \dN/2)_n).
	\]
	Next, we apply \eqref{eq:projHM} and make use of~\eqref{eq:dunklleibniz} 
		\begin{align*}
				\proj_{\Mono_n}^{\Harmo_n\otimes V}  H_{\beta} s &= H_{\beta}s -  \frac{ \eps}{2n +\dN + 2\gamma -2}\xun \,\DDop\KelvK \Dun{}^{\beta} \KelvK(s) \\
			&=  H_{\beta}s - \frac{ \eps}{2n +\dN + 2\gamma -2} \xun \, \DDop  \normxx{2\gamma + \dN - 2 + 2n} \Dun{}^{\beta} \normxx{-2\gamma - \dN + 2}s\\
			&=  H_{\beta}s- \frac{ \eps}{2n +\dN + 2\gamma -2}(2\gamma + \dN - 2 + 2n) \xun\, \xun \normxx{-2}  \normxx{2\gamma + \dN - 2 + 2n} \Dun{}^{\beta} \normxx{-2\gamma - \dN + 2}s\\
			&\quad + \frac{ \eps}{2n +\dN + 2\gamma -2} \xun   \normxx{2\gamma + \dN - 2 + 2n} \Dun{}^{\beta}\DDop \normxx{-2\gamma - \dN + 2} s\\
			&=  H_{\beta}s-  \normxx{2\gamma + \dN - 2 + 2n} \Dun{}^{\beta} \normxx{-2\gamma - \dN + 2}s\\
			&\quad + \frac{ 2\eps(\gamma + \dN/2 -1)}{2n +\dN + 2\gamma -2} \xun   \normxx{2\gamma + \dN - 2 + 2n} \Dun{}^{\beta} \xun \normxx{-2}\normxx{-2\gamma - \dN+2}s\\
			&=  \frac{ \eps(\gamma + \dN/2 -1)}{n +\dN/2 + \gamma -1}\KelvI \Dun{}^{\beta} \KelvI(s),
		\end{align*}
		where we used~\eqref{eq:KelvKandKelvI}. 
		The result now follows by Proposition~\ref{prop:zkelv}. 
\end{proof}


\section{Monogenic bases} \label{sec:Monobase}
The goal of this section is to give a basis for the polynomial monogenics using the generalised symmetries of the previous section. Note that we will here also assume $\kappa$ to be a positive real function.

The idea is that acting with a generalised symmetry on a monogenic polynomial gives a monogenic polynomial of a higher degree. We can thus construct a generating set for $\Mono_n$ starting from the space of monogenics of degree 0. 
The strategy we employ to reduce the generated set to a basis is inspired by the one applied by Xu in the Dunkl harmonic case~\cite{xu_harmonic_2000}. 
Note that for the case of harmonic polynomials, the degree 0 harmonics $\Harmo_0$ are the constant polynomials, while the space of monogenics of degree 0 is given by the spinor space: $\Mono_0 = V$.

For each multi-index $ \beta = (\beta_1,\dots,\beta_\dN)\in \NN^\dN$ with $| \beta|_1=n$ and each spinor $s \in V = \Mono_0$,  we define a polynomial monogenic of degree $n$ by
\begin{equation}
	\zzpoly^{ \beta}_s := \zz^{ \beta} s = \zz_1^{\beta_1} \dots \zz_{\dN}^{\beta_{\dN}} s.
\end{equation}
It is direct to see from~\eqref{eq:coro:polyKI} that
	\begin{equation}
		\zzpoly^{ \beta}_s = (-1)^n\eps^{n-1} \KelvI\Dun{}^{ \beta}\KelvI (s) = (-1)^n\eps^{n-1}\KelvI\Dun{1}^{\beta_1}\dots \Dun{\dN}^{\beta_\dN}\KelvI( s).
\end{equation}

Recall that $\xi_j = (0,\dots, 1,0,\dots,0)\in \NN^\dN \subset \RR^\dN$ and so $\beta + \xi_j = (\beta_1,\dots, \beta_j+1,\dots,\beta_d)$. 
\begin{lemma}\label{lem:recpolyz}
	Let $ \beta\in \NN^\dN$ with $| \beta|_1 = n$ and $s\in V$. The spinor-valued polynomial $\zzpoly^{ \beta}_s$ respects
	\begin{equation}
		\zzpoly^{ \beta + \xi_j}_s = (2\varepsilon( n+\dN/2 +\gamma) x_j  - 2\varepsilon \xun \sym{j} - \xun e_j - \varepsilon \xsq\Dun{j}) \zzpoly^{ \beta}_s.
	\end{equation}
\end{lemma}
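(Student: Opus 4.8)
The plan is to express $\zzpoly^{\beta+\xi_j}_s$ as $\zz_j$ acting on $\zzpoly^{\beta}_s$ and then use the formula~\eqref{eq:lem:zotherversions3} for $\zz_j$, exploiting the fact that $\zzpoly^{\beta}_s$ is a homogeneous monogenic of degree $n$. Indeed, by definition $\zzpoly^{\beta+\xi_j}_s = \zz^{\beta+\xi_j}s = \zz_1^{\beta_1}\cdots\zz_j^{\beta_j+1}\cdots\zz_\dN^{\beta_\dN}s$, and since the $\zz_k$ commute on $\Poly\otimes V$ by Proposition~\ref{prop:commZ}, this equals $\zz_j\,\zz^{\beta}s = \zz_j\,\zzpoly^{\beta}_s$. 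So everything reduces to evaluating $\zz_j$ on the single vector $\zzpoly^{\beta}_s$.

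Next I would recall from Lemma~\ref{lem:zotherversions} that $\zz_j = 2\varepsilon x_j(\Euler + \dN/2 + \gamma) - \xun(e_j + 2\varepsilon\sym{j}) - \varepsilon\xsq\Dun{j}$. Applying this to $\zzpoly^{\beta}_s$, the only term that needs attention is the Euler operator: since $\zzpoly^{\beta}_s \in \Mono_n \subset \Poly_n\otimes V$ is homogeneous of degree $n$, we have $\Euler\,\zzpoly^{\beta}_s = n\,\zzpoly^{\beta}_s$, so $(\Euler + \dN/2 + \gamma)\zzpoly^{\beta}_s = (n + \dN/2 + \gamma)\zzpoly^{\beta}_s$. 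Substituting this in gives exactly
\begin{equation*}
\zzpoly^{\beta+\xi_j}_s = \bigl(2\varepsilon(n+\dN/2+\gamma)x_j - 2\varepsilon\xun\sym{j} - \xun e_j - \varepsilon\xsq\Dun{j}\bigr)\zzpoly^{\beta}_s,
\end{equation*}
which is the claimed identity (after reordering the middle two terms).

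There is no real obstacle here: the proof is essentially a two-line combination of commutativity of the $\zz_k$ (Proposition~\ref{prop:commZ}) and the homogeneity of $\zzpoly^{\beta}_s$, together with the already-established alternative expression~\eqref{eq:lem:zotherversions3} for $\zz_j$. The one point to be careful about is that Proposition~\ref{prop:zkelv} and hence Proposition~\ref{prop:commZ} are stated for the action on $\Poly\otimes V$, which is precisely the setting we need since $\zzpoly^{\beta}_s$ lies in $\Poly_n\otimes V$; one should also note that $\zz_j$ maps $\Poly_n\otimes V$ into $\Poly_{n+1}\otimes V$ so the degree bookkeeping is consistent. If one wanted to be fully explicit one could instead verify the identity via the Kelvin-transform formula $\zzpoly^{\beta}_s = (-1)^n\varepsilon^{n-1}\KelvI\Dun{}^{\beta}\KelvI(s)$ and the computation in the proof of Proposition~\ref{prop:zkelv}, but the homogeneity argument is cleaner.
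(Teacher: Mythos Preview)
Your proposal is correct and follows exactly the same approach as the paper's proof, which consists of the single sentence ``It follows from the commutativity of the $\zz_j$ along with their expression~\eqref{eq:lem:zotherversions3}.'' You have simply spelled out the two ingredients (Proposition~\ref{prop:commZ} and Lemma~\ref{lem:zotherversions}) and made the homogeneity step explicit.
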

\begin{proof}
It follows from the commutativity of the $\zz_j$ along with  their expression~\eqref{eq:lem:zotherversions3}.
\end{proof}

We now turn our attention to the construction of  bases for the monogenics. 

\begin{proposition}
		Let $\nu$ be a basis of $V$, the spinor representation of $\Clif(\dN)$. The set 
	\begin{equation}
		\mathcal{C}_n = \{\zzpoly^{\beta}_s\mid \beta\in\NN^{\dN}, |\beta|_1 = n,\ s\in \nu\ \}
	\end{equation}
	is a generating set for $\Mono_n(\RR^\dN;V)$. Moreover, the following relations hold
	\begin{equation}\label{eq:rellindep}
		\sum_{j=1}^\dN \zzpoly^{\eta + \xi_j}_{e_j\cdot s} = 0, \quad \text{for every}\quad \eta\in \NN^{\dN}, \text{ with } |\eta|_1 = n-1, \ \text{and } s\in V.
	\end{equation}
\end{proposition}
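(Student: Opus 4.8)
The plan is to prove the two claims separately, both relying on the Kelvin-transform description $\zzpoly^\beta_s = (-1)^n\eps^{n-1}\KelvI\Dun{}^\beta\KelvI(s)$ from Proposition~\ref{prop:zkelv}.

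\textbf{Generating set.} First I would show $\mathcal{C}_n \subset \Mono_n$: each $\zz_j$ is a generalised symmetry of $\DDop$ (Proposition~\ref{prop:gensym}), hence preserves $\ker\DDop$, and it raises the degree by one (visible from the expression $\zz_j = 2\eps x_j H - \xun\Dun{j}\xun$, which is homogeneous of degree $+1$ since $H,\Dun{j},\xun$ have degrees $0,-1,+1$), so $\zz^\beta s \in \Mono_n$ for $s\in V = \Mono_0$. For surjectivity onto $\Mono_n$, I would transport the problem through $\KelvI$: since $\KelvI^2 = \eps$, the map $\KelvI$ is a bijection, and $\KelvI(\Mono)$ can be identified (up to the $\xun$ factor and a negative power of $|x|$) with a space on which the Dunkl operators $\Dun{j}$ act. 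The cleaner route is to invoke the known fact (used implicitly via Xu's Theorem~\ref{thm:xu} and the harmonic analogue) that $\Mono_n$ is spanned by $\proj_{\Mono_n}^{\Poly_n\otimes V}(x^\beta s)$ over all $\beta$ with $|\beta|_1 = n$ and $s\in\nu$ — because $\proj_{\Mono_n}^{\Poly_n\otimes V}$ is surjective and the monomials $x^\beta$ span $\Poly_n$. Then Proposition~\ref{prop:zandproj} gives $\zz^\beta s = \eps^n 2^n(\gamma+\dN/2)_n\,\proj_{\Mono_n}^{\Poly_n\otimes V}(x^\beta s)$, and since the Pochhammer scalar is nonzero (here $\kappa\ge 0$, so $\gamma\ge 0$ and $\gamma+\dN/2>0$), the set $\mathcal{C}_n$ spans the same space, namely all of $\Mono_n$.

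\textbf{The relations.} For \eqref{eq:rellindep}, fix $\eta$ with $|\eta|_1 = n-1$ and $s\in V$. Using Proposition~\ref{prop:zkelv} termwise,
\begin{align*}
\sum_{j=1}^\dN \zzpoly^{\eta+\xi_j}_{e_j s} &= (-1)^n\eps^{n-1}\sum_{j=1}^\dN \KelvI\,\Dun{}^{\eta+\xi_j}\,\KelvI(e_j s)\\
&= (-1)^n\eps^{n-1}\sum_{j=1}^\dN \KelvI\,\Dun{}^{\eta}\Dun{j}\,\KelvI(e_j s),
\end{align*}
where I used that the Dunkl operators commute to write $\Dun{}^{\eta+\xi_j} = \Dun{}^\eta\Dun{j}$ regardless of the position of the extra factor. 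Now $\KelvI(e_j s) = \xun\,\normxx{-(2\gamma+\dN)} e_j s$, and since $e_j s\in V$ is constant in $x$, the factor $e_j s$ passes to the right and $\KelvI(e_j s) = \xun e_j\,\normxx{-(2\gamma+\dN)} s$. Summing over $j$ converts $\sum_j \Dun{j}(\xun e_j\,\cdot) $ into something built from $\sum_j e_j\Dun{j}$ on the left; the key identity is $\sum_j \Dun{j}\,\xun e_j = -\DDop\,\xun$ acting appropriately — more precisely, pulling the common scalar radial factor out (legitimate by the Dunkl--Leibniz rule \eqref{eq:dunklleibniz}) and using $\DDop = \sum_j \Dun{j}e_j$ together with the anticommutation $\acomm{\Dun{j}}{\xun}$, the whole sum collapses to an expression with a left factor of $\DDop$ acting on a polynomial, hence lands in $\ker$-producing form. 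Since $\DDop\KelvI = $ (generalised symmetry relation, Proposition~\ref{prop:gensym}, transported) annihilates the relevant monogenic, and more directly since $\DDop^2 = \eps\DLapl$ and applying $\sum_j e_j\Dun{j}$ twice to the vector variable produces a clean cancellation, the sum vanishes.

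\textbf{Main obstacle.} The hard part will be the explicit computation showing $\sum_{j=1}^\dN \Dun{j}\,\xun e_j$ (or equivalently the summed expression from Lemma~\ref{lem:recpolyz}) produces a left factor annihilating monogenics. The slick way around grinding this out is to recognise that $\sum_j \zzpoly^{\eta+\xi_j}_{e_j s} = (-1)^n\eps^{n-1}\KelvI\big(\Dun{}^\eta \sum_j \Dun{j}\KelvI(e_j s)\big)$ and that $\sum_j \Dun{j}\KelvI(e_j s) = \sum_j \Dun{j}(\xun e_j) \normxx{-(2\gamma+\dN)}s$; computing $\sum_j\Dun{j}(\xun e_j)$ via \eqref{eq:DDx}, namely $\comm{\Dun{j}}{\xun} = e_j + 2\sym{j}$ with $\sum_j(e_j+2\sym j)e_j$ evaluated using Lemma~\ref{lem:ojejisomega} and $\sum_j e_j^2 = \eps\dN$, together with $\sum_j \xun e_j\Dun{j} = \xun\DDop$ acting as zero on $\KelvK$-transformed monogenics — this reduces the relation to the already-established fact that $\KelvI\Dun{}^\eta\KelvI$ applied to the spinor is monogenic and the extra summed operator is exactly $\DDop$ up to scalar, which kills it. I would organise the final write-up around that collapse rather than a brute-force index manipulation.
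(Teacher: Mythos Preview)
Your argument for the generating set is correct and essentially identical to the paper's: both use Proposition~\ref{prop:zandproj} to identify $\zz^\beta s$ with a nonzero scalar multiple of $\proj_{\Mono_n}^{\Poly_n\otimes V}(x^\beta s)$, together with surjectivity of the projection.

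For the relations~\eqref{eq:rellindep} your Kelvin-transform route is different from the paper's, and while it can be made to work, your write-up contains incorrect intermediate claims and does not reach a conclusion. The paper simply observes that $\sum_j \zz^{\eta+\xi_j} e_j s = \zz^\eta\big(\sum_j \zz_j e_j\big) s = \zz^\eta\,\zzu\, s$ and invokes Lemma~\ref{lem:vectorzz}, which gives
\[
\zzu = 2\eps\,\xun\Big(\Euler + \gamma - \sum_{\alpha\in R^+}\kappa(\alpha)\sigma_\alpha\Big) - \eps\,\xsq\,\DDop.
\]
On a constant spinor $s$ one has $\Euler s = 0$, $\sigma_\alpha s = s$ (so the bracket becomes $\gamma-\gamma=0$), and $\DDop s = 0$; hence $\zzu\, s = 0$ and the sum vanishes in one line. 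You never invoke Lemma~\ref{lem:vectorzz}, which is the intended shortcut.

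Your approach amounts to showing $\sum_j \Dun{j}\KelvI(e_j s)=0$. This is true: decomposing $\Dun{j}\xun = \xun\Dun{j} + [\Dun{j},\xun]$ and using~\eqref{eq:dunklleibniz} one gets
\[
\sum_j \Dun{j}\big(\xun\,\normxx{-(2\gamma+\dN)} e_j s\big)
= \xun\,\DDop\big(\normxx{-(2\gamma+\dN)} s\big) + \normxx{-(2\gamma+\dN)}\sum_j(e_j+2\eps\sym{j})e_j\, s,
\]
and by Lemma~\ref{lem:ojejisomega} with $\sigma_\alpha s = s$ the second term is $\eps(\dN+2\gamma)\normxx{-(2\gamma+\dN)} s$, while the first is $-(2\gamma+\dN)\eps\normxx{-(2\gamma+\dN)} s$; these cancel. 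However, two of your stated steps are wrong as written: you cannot ``pull the common scalar radial factor out'' past $\Dun{j}$, since $[\Dun{j},\normxx{a}]\neq 0$; and the term $\xun\DDop$ does \emph{not} act as zero on $\normxx{-(2\gamma+\dN)} s$ (this function is neither a $\KelvK$-transformed monogenic nor in $\ker\DDop$). The vanishing comes from an exact cancellation of two nonzero contributions, not from a $\DDop$-annihilation, so the final ``collapse'' you describe is not the mechanism at play. Carry the computation through explicitly rather than gesturing at it, or switch to the paper's route via Lemma~\ref{lem:vectorzz}.
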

\begin{proof}
	Proposition~\ref{prop:gensym} states that every $\zz_j$ is a generalised symmetry of $\DDop$ and thus $\DDop\zzpoly^{ \beta}_s =0$, so $\operatorname{span}_{\CC}(\mathcal{C}_n) \subset \Mono_n(\RR^d;V)$. The monomials $x^{\beta}$ for $\beta\in \NN^d$ with $|\beta|_1=n$, together with the spinors $s\in \nu$, form a basis of $\Poly_n\otimes V$. The projection $\proj_{\Mono_n}^{\Poly_n\otimes V}$ sends the element $x^{\beta}\otimes s$ to a multiple of $\zzpoly_s^{\beta} = \zz^{\beta}s\in \mathcal{C}_n$  by Proposition~\ref{prop:zandproj}, so $\operatorname{span}(\mathcal{C}_n)$ contains $\im( \proj_{\Mono_n}^{\Poly_n\otimes V})$. The projection is surjective, therefore $\operatorname{span}_{\CC}(\mathcal{C}_n) \supset \Mono_n(\RR^d;V)$. We have thus shown that the set $\mathcal{C}_n$ generates $\Mono_n(\RR^d;V)$.
	
	We now show that the 
	relations of the form~\eqref{eq:rellindep} hold. Let $ \eta\in \NN^{\dN}$ with $|\eta|_1 =n-1$ and $s\in V$. We use now Lemma~\ref{lem:vectorzz} and the relations $\Euler s = 0$,  $\sigma_{\alpha} s = s$ and $\DDop s = 0$ to find
	\begin{align*}
		\sum_{j=1}^\dN \zzpoly^{ \eta + \xi_j}_{e_j\cdot s} &= \sum_{j=1}^\dN \zz^{ \eta}\zz_je_j s = \zz^{ \eta} \zzu s = \zz^{\eta}(2\eps \xun(\Euler +\gamma - \sum_{\alpha\in R^+} \kappa(\alpha)\sigma_{\alpha}) - \eps \xsq \DDop)s = 0.\qedhere
	\end{align*}
\end{proof}

\begin{remark}
	Let $\eta\in \NN^{\dN}$ with $|\eta|_1 = n-2$. Xu  exhibited the relations satisfied by the harmonics of equation~\eqref{eq:Xuharmo}~\cite[p.~500]{xu_harmonic_2000} (see also~\cite[pp. 212-213]{dunkl2014orthogonal}):
	\begin{align}\label{eq:relxu}
		\sum_{j=1}^\dN H_{\eta + 2\xi_j} &= \KelvK \Dun{}^{\eta}\DLapl \KelvK(1) = 0.
	\end{align}
	The monogenics satisfy the same relation, as can be seen by applying twice relation~\eqref{eq:rellindep}, or by viewing it in the Dunkl--Clifford--Kelvin transform
	\begin{align}\label{eq:relalmostxu}
		\sum_{j=1}^\dN \zzpoly^{\eta + 2\xi_j}_{s} = (-1)^n \eps^{n-1} \KelvI \Dun{}^{\eta} \DLapl \KelvI(s) = 0.
	\end{align}
\end{remark}

The relations~\eqref{eq:rellindep} can be used to reduce the generating set $\mathcal{C}_n$ to a basis. For instance, the next theorem shows that if we consider only multi-indices $\fatj\in\NN^\dN$ with zero as last index, we  get a basis of the polynomial monogenics. Other bases can be constructed by following the same strategy but excluding other elements from $\mathcal{C}_n$ using the relations~\eqref{eq:rellindep}. 
The proof of the following theorem also shows that for a fixed $n\in \NN$ and $s\in\nu$, with  $\nu$ a basis of $V$, the relations~\eqref{eq:rellindep} are all independent.

\begin{theorem}\label{thm:maxwellbasis}
	Let $\nu$ be a basis of $V$, the spinor representation of $\Clif(\dN)$. The set 
	\begin{equation}
    		\base_n = \{\zzpoly^{\fatj}_s\ \mid\ \fatj = (j_1,\dots,j_{\dN-1},0)\in \NN^{\dN}, |\fatj|_1 = n, \ s\in \nu\}
	\end{equation}
	is a basis of $\Mono_n(\RR^\dN;V)$.
\end{theorem}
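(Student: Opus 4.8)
The plan is to show that $\base_n$ has the right cardinality and spans $\Mono_n(\RR^\dN;V)$, which together with a dimension count forces it to be a basis. Concretely: the number of multi-indices $\fatj = (j_1,\dots,j_{\dN-1},0)\in\NN^\dN$ with $|\fatj|_1 = n$ equals the number of compositions of $n$ into $\dN-1$ nonnegative parts, namely $\binom{n+\dN-2}{\dN-2}$, and multiplying by $\dim V$ gives $\binom{n+\dN-2}{\dN-2}\dim V$. The first key step is to recall (or cite, e.g. from \cite{orsted_howe_2009} or the computations underlying \eqref{eq:projpi}) that $\dim\Mono_n(\RR^\dN;V) = \binom{n+\dN-2}{\dN-2}\dim V$; this is the same Hilbert-series identity that holds in the undeformed Clifford-analytic setting, and it is independent of $\kappa$ because $\proj_{\Mono_n}^{\Poly_n\otimes V}$ deforms continuously in $\kappa$ without changing rank (for $\kappa\geq 0$, where the Pochhammer denominators in \eqref{eq:projpi} do not vanish).

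The second, and main, step is to prove that $\base_n$ spans $\Mono_n$. We already know from the preceding Proposition that $\mathcal{C}_n = \{\zzpoly^\beta_s : |\beta|_1=n,\ s\in\nu\}$ is a spanning set, so it suffices to rewrite every $\zzpoly^\beta_s$ with $\beta_\dN > 0$ as a linear combination of $\zzpoly^{\fatj}_{s'}$ with $\fatj$ having last entry zero. For this I would use the relation \eqref{eq:rellindep}: given $\beta$ with $\beta_\dN \geq 1$, write $\beta = \eta + \xi_\dN$ with $|\eta|_1 = n-1$, and solve \eqref{eq:rellindep} (applied with that $\eta$) for the $j=\dN$ term,
\[
  \zzpoly^{\eta+\xi_\dN}_{e_\dN\cdot s} = -\sum_{j=1}^{\dN-1}\zzpoly^{\eta+\xi_j}_{e_j\cdot s}.
\]
Since $e_\dN$ is invertible in $\Clif(\dN)$ (as $e_\dN^2 = \eps$), replacing $s$ by $\eps\, e_\dN^{-1}s = e_\dN s$ lets me express $\zzpoly^{\eta+\xi_\dN}_{s}$ for an arbitrary spinor in terms of $\zzpoly^{\eta+\xi_j}_{s'}$ with $j\leq \dN-1$, i.e.\ in terms of monomials in $\zz_1,\dots,\zz_\dN$ whose $\zz_\dN$-exponent has strictly dropped — but the $\zz_j$ for $j<\dN$ still appear, multiplied on the outside. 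Using commutativity of the $\zz_j$ (Proposition~\ref{prop:commZ}), I can then induct on the $\zz_\dN$-exponent $\beta_\dN$: each application of the rewriting reduces $\beta_\dN$ by one at the cost of spreading over the other indices, and after $\beta_\dN$ steps one lands in $\vspn\base_n$. This shows $\vspn\base_n = \Mono_n$.

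Finally, combining the two steps: $\vspn\base_n = \Mono_n(\RR^\dN;V)$ and $|\base_n| = \binom{n+\dN-2}{\dN-2}\dim V = \dim\Mono_n(\RR^\dN;V)$, so $\base_n$ is a basis. As a by-product, the spanning argument uses exactly the relations \eqref{eq:rellindep} indexed by $(\eta,s)$ with $|\eta|_1 = n-1$ and $s\in\nu$, of which there are $\binom{n+\dN-2}{\dN-1}\dim V$; since $|\mathcal{C}_n| - |\base_n| = \left(\binom{n+\dN-1}{\dN-1} - \binom{n+\dN-2}{\dN-2}\right)\dim V = \binom{n+\dN-2}{\dN-1}\dim V$ matches, these relations must be linearly independent — which is the remark promised before the theorem. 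The main obstacle is bookkeeping in the induction of the second step: one must be careful that the rewriting genuinely terminates (it does, since $\beta_\dN$ is strictly decreasing and bounded below by $0$) and that commutativity is invoked correctly so that the outer factors $\zz_1,\dots,\zz_{\dN-1}$ can be reordered and absorbed; there is no analytic subtlety beyond the non-vanishing of denominators for $\kappa\geq 0$.
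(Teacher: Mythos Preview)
Your proposal is correct and follows essentially the same approach as the paper: both use the relations~\eqref{eq:rellindep} to eliminate all $\zzpoly^{\beta}_s$ with $\beta_\dN>0$ from the spanning set $\mathcal{C}_n$ and then invoke the dimension count $\dim\Mono_n(\RR^\dN;V)=\binom{n+\dN-2}{\dN-2}\dim V$. The only cosmetic difference is that the paper organises the elimination via a reverse-lexicographic ordering on the $\eta$'s rather than your induction on $\beta_\dN$, but the two are equivalent here.
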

\begin{proof}
For every $s\in \nu$, let $s'=\eps e_d \cdot s$ and for these $s'$, consider the relations~\eqref{eq:rellindep} for all $\eta$ with $|\eta|_1=n-1$. 
These relations can be used to go from $\mathcal{C}_n$ to $\mathcal{B}_n$ by removing all polynomials $\zzpoly_{s}^{\beta}$ with $\beta_d\neq 0$. 
We will show that each relation will remove exactly one polynomial, after which the result follows by a dimension argument. We order the multi-indices $I:=\{\eta\in \NN^{\dN}\, \mid\, |\eta|_1 =n-1\}$ by reverse lexicographic order, so $\eta_1 = (0,\dots,0,n-1)$, $\eta_2 = (0,\dots,0,1,n-2), \dots, \eta_{|I|} = (n-1,0,\dots,0)$, with $ |I|=\dim\Poly_{n-1}(\RR^\dN)=\binom{n+d-2}{d-1}$.

Since $e_d\cdot s' = s$ for $s\in\nu$, we can write relation~\eqref{eq:rellindep} for $\eta_i$ and $s'\in V$ as
\begin{equation}\label{eq:relCnv2}
\zzpoly_{s}^{\eta_i+\xi_d} = - \sum_{j=1}^{d-1} \zzpoly_{e_j\cdot s'}^{\eta_i+\xi_j}. 
\end{equation}
For each $\eta_i$, we can use this to exclude the polynomials $\{\zzpoly_{s}^{\eta_i+\xi_d} \mid s \in\nu\}$, since the right-hand side of~\eqref{eq:relCnv2} is a sum of polynomials strictly lower in the ordering. Therefore, starting from the set $\mathcal{C}_n$, doing this procedure in the reverse lexicographic order for all $\eta_i$ each step will  exclude $\dim V$ polynomials from the set.  

This results in a basis, as can be seen from the dimensions of the spaces involved. Indeed, $|\mathcal{C}_n|=\dim \Poly_n(\RR^\dN)\times \dim V$ and there are $\dim \Poly_{n-1}(\RR^{\dN})\times \dim V$ different linear relations between its members; $\dim \Mono_n(\RR^\dN; V)=\dim\Poly_n(\RR^{d-1})\times \dim V$ and
 \begin{equation*}
 	 (\dim \Poly_n(\RR^\dN)- \dim\Poly_{n-1}(\RR^\dN))\times \dim V = \binom{n+\dN-2}{d-2}\times  \dim V = \dim \Mono_n(\RR^\dN;V). \qedhere
 \end{equation*}
\end{proof}


\section{Examples: the abelian cases} \label{sec:ExAb}

\subsection{Reducible reflection groups}\label{ssec:Zredu}

In this section, we consider the cases when the reflection group is reducible: $W = W_S\oplus W_T$, for $S, T$ the two root subsystems with $R$ the disjoint union of the two. 

Let $M$ be the rank of $S$. For simplicity, assume that $S$ is restricted to the first $M$ coordinates. We have an $\osp(1|2)$ realisation given by the following operators 
\begin{gather*}
\DDopM:= \sum_{a=1}^M \Dun{a}e_a, \qquad \xunM := \sum_{a=1}^M x_ae_a, \qquad \EulerM := \sum_{a=1}^M x_a\partial_{x_a},\quad
\gammaM:=  \sum_{\alpha\in S^+} \kappa(\alpha).
\end{gather*}

The odd elements $\DDopM$ and $\xunM$ generate a realisation of the superalgebra $\osp(1|2)$ with the following commutation relation
\begin{equation}
 \acomm{\DDopM}{\xunM} = 2\varepsilon \HM = 2\varepsilon(\EulerM + M/2 + \gammaM).
\end{equation}
Thus one can also define generalised symmetries in this ``sub'' $\osp(1|2)$-realisation.
\begin{definition}\label{def:partialz}
Let $1\leq j\leq M$. The partial generalised symmetry linked to $\DDopM$ is given by
	 \begin{equation}\label{eq:partialz}
	 \zM{j} := 2\varepsilon x_j\HM - \xunM\Dun{j}\xunM.
	 \end{equation}
\end{definition}

Naturally, $\zM{j}$ satisfies the equivalent relations of Proposition~\ref{prop:commZ}, Lemma~\ref{lem:zotherversions}, Proposition~\ref{prop:relofzandother} and Proposition~\ref{prop:zkelv}  since it is also in an $\osp(1|2)$ realisation. 

\subsection{The abelian cases}

We turn to the study of the abelian case, so the Dunkl--Dirac symmetry algebra for the group $W=\ZZ^\dN_2$ acting on $\RR^\dN$ with a $W$-invariant function given by the $\dN$-tuple of non-negative constants $(\kappa_1,\dots,\kappa_\dN)$. In the abelian case, the reflection $\sigma_j$ sends $x_j$ to $-x_j$ and leaves the other variables invariant. The Dunkl operators are given by
\begin{equation}
	\Dun{i} = \partial_{x_i} + \kappa_i \frac{1-\sigma_i}{x_i},
\end{equation}
and the commutation relation~\eqref{eq:commDx} becomes
\begin{equation}
	\comm{\Dun{i}}{x_j} = \delta_{ij} (1+2\kappa_i\sigma_i).
\end{equation}

Albeit Theorem~\ref{thm:maxwellbasis} gives a basis of $\Mono_n$, the specificity of the group studied call for a slightly different approach. The complete reducibility of $W=\ZZ_2^\dN$ was used in~\cite{de_bie_z2n_2016} to construct a basis from the Cauchy--Kovalevskaya extension Theorem. We will retrieve this construction from the generalised symmetries of Definition~\ref{def:partialz}.

\subsection{The Cauchy--Kovalevskaya basis}

In the abelian case, there exists a generalisation of the Cauchy-Kovalevskaya map. It can be used to construct a basis of the polynomial monogenics.

\begin{proposition}[{\cite[Eq.~(31)]{de_bie_z2n_2016}}]
	Let $V$ be an irreducible representation of the Clifford algebra $\Clif(\dN)$. There is an isomorphism between the space of spinor-valued polynomials of degree $n$ over $k-1$ variables and the monogenics of degree $n$ over $k$ variables given by 
	\begin{equation}\label{eq:CK}
		\begin{gathered}
			\CK_{x_k}^{\kappa_k}: \Poly_{n}(\RR^{k-1})\otimes V \longrightarrow \Mono_{n}(\RR^k;V)\\
			p\mapsto \CK_{x_k}^{\kappa_k}(p) = \sum_{a=0}^{\lfloor n/2\rfloor} \frac{x_k^{2a} \DDopA{k-1}^{2a}}{2^{2a}a!(\kappa_k+1/2)_a}p - \varepsilon \frac{e_kx_k\DDopA{k-1}}{2}\sum_{a=0}^{\lfloor \frac{n-1}{2}\rfloor} \frac{x_k^{2a}\DDopA{k-1}^{2a}}{2^{2a}a!(\kappa_k+1/2)_{a+1}}p.
		\end{gathered}
	\end{equation}
\end{proposition}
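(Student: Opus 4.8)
**Proof proposal for the statement that $\CK_{x_k}^{\kappa_k}$ is an isomorphism $\Poly_n(\RR^{k-1})\otimes V \to \Mono_n(\RR^k;V)$.**

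The plan is to verify three things: (1) the map lands in the monogenics, (2) it is injective, and (3) a dimension count forces surjectivity. For step (1), I would set $q = \CK_{x_k}^{\kappa_k}(p)$ and compute $\DDopA{k}\, q$ using the splitting $\DDopA{k} = \DDopA{k-1} + e_k\Dun{k}^{(k)}$, where $\Dun{k}^{(k)}$ acts only in the $x_k$-variable via $\Dun{k} = \partial_{x_k} + \kappa_k(1-\sigma_k)/x_k$. Since $p$ depends only on $x_1,\dots,x_{k-1}$, the operators $\DDopA{k-1}$ and $\Dun{k}$ act on disjoint sets of variables and hence commute, and $\DDopA{k-1}$ commutes with multiplication by powers of $x_k$; moreover $\DDopA{k-1}^2 = \varepsilon \DLapl_{[k-1]}$ is even while $e_k$ anticommutes with each $e_a$ ($a<k$), so $e_k \DDopA{k-1} = -\DDopA{k-1} e_k$. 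The key computational input is the action of $\Dun{k}$ on monomials in $x_k$: one has $\Dun{k}(x_k^{2a}) = 2a\, x_k^{2a-1}$ and $\Dun{k}(x_k^{2a+1}) = (2a+1+2\kappa_k)x_k^{2a}$ (since $\sigma_k$ kills even powers and fixes... — more precisely $\Dun{k}x_k^{m} = (m + \kappa_k(1-(-1)^m))x_k^{m-1}$). Feeding this in, the two sums in~\eqref{eq:CK} are arranged precisely so that the $\DDopA{k-1}$-images telescope against the shifts of the Pochhammer symbols $(\kappa_k+1/2)_a$; I would collect the coefficient of each $x_k^{2a-1}\DDopA{k-1}^{2a+1}p$ and of each $e_k x_k^{2a}\DDopA{k-1}^{2a+1}p$ and check it vanishes. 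This is the standard CK-extension verification and is the routine core of the argument.

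For step (2), injectivity is immediate: the $a=0$ term of the first sum is just $p$ (tensored into the $x_k$-degree-zero part with no $e_k$ factor and no $\DDopA{k-1}$), and every other term carries either a positive power of $x_k$ or the factor $e_k x_k$. So $\CK_{x_k}^{\kappa_k}(p) = 0$ forces $p=0$ by looking at the $x_k$-degree-zero, $e_k$-free component. Equivalently, the "restriction to $x_k=0$, project away $e_k$" map is a left inverse.

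For step (3), I would compare dimensions. On the source, $\dim\big(\Poly_n(\RR^{k-1})\otimes V\big) = \binom{n+k-2}{k-2}\dim V$. On the target, by the same Fischer-type decomposition used in Theorem~\ref{thm:maxwellbasis} (or directly from the formula $\dim\Mono_n(\RR^k;V) = \big(\dim\Poly_n(\RR^k)-\dim\Poly_{n-1}(\RR^k)\big)\dim V$), one computes $\dim\Mono_n(\RR^k;V) = \binom{n+k-2}{k-2}\dim V$ as well. Since an injective linear map between finite-dimensional spaces of equal dimension is an isomorphism, this finishes the proof.

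The main obstacle is purely bookkeeping in step (1): getting the index shifts and the Pochhammer relations $(\kappa_k+1/2)_{a+1} = (\kappa_k+a+1/2)(\kappa_k+1/2)_a$ to line up so that the mixed terms cancel, and handling the boundary terms of the two sums (the top index $\lfloor n/2\rfloor$ versus $\lfloor (n-1)/2\rfloor$) correctly according to the parity of $n$. There is no conceptual difficulty once the commutation facts $e_k\DDopA{k-1}=-\DDopA{k-1}e_k$, $[\DDopA{k-1},x_k]=0$, $[\DDopA{k-1},\Dun{k}]=0$ and the one-variable Dunkl action on $x_k^m$ are recorded; everything else is forced.
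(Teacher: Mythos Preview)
The paper does not prove this proposition: it is quoted from \cite{de_bie_z2n_2016} (for $\varepsilon=-1$), with only the remark that the proof for the other sign is the same up to minor modifications. So there is no ``paper's own proof'' to compare against.

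Your outline is a correct and standard way to establish the result. One small simplification for step~(2): you do not need to ``project away $e_k$''. Every term in \eqref{eq:CK} except the $a=0$ term of the first sum carries a positive power of $x_k$, so the plain evaluation map $R_k\colon f(x_1,\dots,x_k)\mapsto f(x_1,\dots,x_{k-1},0)$ already satisfies $R_k\circ \CK_{x_k}^{\kappa_k}=\mathrm{id}$, giving a genuine left inverse (this is exactly what the paper records immediately after the proposition). With that, step~(3) becomes: $R_k$ is surjective from $\Mono_n(\RR^k;V)$ onto $\Poly_n(\RR^{k-1})\otimes V$ once you know $\CK_{x_k}^{\kappa_k}$ lands in $\Mono_n(\RR^k;V)$ and is injective, and then the dimension comparison you wrote (or equivalently the fact that $R_k$ is also injective on monogenics, which follows since a monogenic vanishing on $x_k=0$ is zero by the CK uniqueness) closes the argument.
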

Note that in~\cite{de_bie_z2n_2016}, the proposition is given for $\varepsilon = -1$. The proof for the two signs is the same up to minor modifications.

The map $\CK_{x_k}^{\kappa_k}$ is an isomorphism and has an inverse given by the map evaluating the last variable to $0$:
\begin{equation}
	\begin{gathered}
		R_k: \Mono_{n}(\RR^k;V) \longrightarrow \Poly_n(\RR^{k-1})\otimes V\\
		f(x_1,\dots , x_k) \longmapsto R_k(f)(x_1,\dots,x_{k-1}) = f(x_1,\dots , x_{k-1},0).
	\end{gathered}
\end{equation}

Consider now the Fischer decomposition of the polynomial space.
\begin{proposition}[Fischer decomposition~\cite{brackx1984clifford}]
	When $\kappa$ is positive, the space of spinor-valued polynomials decomposes in monogenic spaces as
	\begin{equation}
		\Poly_n(\RR^\dN)\otimes V = \bigoplus_{k=0}^n \xunA{\dN}^{n-k}\Mono_k(\RR^\dN;V).
	\end{equation}
\end{proposition}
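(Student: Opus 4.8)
The plan is to prove the decomposition by a dimension count combined with an argument that the sum of subspaces $\xunA{\dN}^{n-k}\Mono_k(\RR^\dN;V)$ for $0\le k\le n$ exhausts $\Poly_n(\RR^\dN)\otimes V$. First I would note that each $\xunA{\dN}^{n-k}\Mono_k(\RR^\dN;V)$ does sit inside $\Poly_n(\RR^\dN)\otimes V$, since $\xunA{\dN}$ is a degree-one element of $\RCA\otimes\Clif(\dN)$ and multiplication by $\xunA{\dN}$ raises polynomial degree by one. The heart of the matter is then to show, by induction on $n$, that every $p\in\Poly_n(\RR^\dN)\otimes V$ can be written as $M_n + \xunA{\dN} q$ with $M_n\in\Mono_n(\RR^\dN;V)$ and $q\in\Poly_{n-1}(\RR^\dN)\otimes V$; applying the induction hypothesis to $q$ then yields the full decomposition.

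For the decomposition step I would set $M_n := \proj^{\Poly_n\otimes V}_{\Mono_n}(p)$, the monogenic projection from~\eqref{eq:projpi}. Inspection of that formula shows that $p - M_n$ is a sum of terms each carrying a factor of $\xunA{\dN}$ on the left (the odd terms $\xun^{2j+1}\DDop^{2j+1}p$ obviously, and the even terms $\normxx{2j}\DLapl^j p = \eps^{-1}\xun^{2}\DLapl^j p$ since $\xsq = \eps\xun^2$), so $p - M_n = \xunA{\dN}q$ for some $q\in\Poly_{n-1}(\RR^\dN)\otimes V$. This gives the spanning statement $\Poly_n(\RR^\dN)\otimes V = \sum_{k=0}^n \xunA{\dN}^{n-k}\Mono_k(\RR^\dN;V)$. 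To upgrade the sum to a direct sum, I would count dimensions: from Theorem~\ref{thm:maxwellbasis} (or the dimension formula obtained in its proof) one has $\dim\Mono_k(\RR^\dN;V) = \dim\Poly_k(\RR^{\dN-1})\cdot\dim V$, and the binomial identity $\sum_{k=0}^n \binom{k+\dN-2}{\dN-2} = \binom{n+\dN-1}{\dN-1} = \dim\Poly_n(\RR^\dN)$ shows that $\sum_{k=0}^n \dim\xunA{\dN}^{n-k}\Mono_k(\RR^\dN;V) = \dim\Poly_n(\RR^\dN)\cdot\dim V = \dim(\Poly_n(\RR^\dN)\otimes V)$. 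A surjective linear map between spaces of equal finite dimension is an isomorphism, so the sum is direct. (One should also check that $p\mapsto \xunA{\dN}p$ is injective on $\Poly_{n-1}\otimes V$, which follows since $\xun^2 = \eps\xsq$ is a nonzero scalar multiple of a nonzerodivisor, or simply from the dimension count just given.)

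The main obstacle is the positivity hypothesis on $\kappa$: the Pochhammer factors $(n-j-1+\dN/2+\gamma)_{j+1}$ and $(n-j+\dN/2+\gamma)_j$ appearing in the denominators of~\eqref{eq:projpi} must be nonzero for the projection $\proj^{\Poly_n\otimes V}_{\Mono_n}$ to be well-defined, and this is exactly what $\kappa\ge 0$ (hence $\gamma\ge 0$) guarantees, since then each factor is a product of strictly positive reals. Equivalently, positivity ensures that the $\osp(1|2)$-representation $\Poly_n(\RR^\dN)\otimes V$ is completely reducible into highest-weight (monogenic) pieces with no resonances, which is the representation-theoretic content of the statement. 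Everything else is routine bookkeeping with degrees and binomial coefficients.
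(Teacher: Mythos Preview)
The paper does not supply a proof of this proposition; it is quoted from the literature (the reference~\cite{brackx1984clifford}), so there is no in-paper argument to compare against. Your spanning step via the projector~\eqref{eq:projpi} is the standard route and is correct, up to the harmless slip that $\normxx{2j}=\eps^{j}\xun^{2j}$ rather than $\eps^{-1}\xun^{2}$; the conclusion $p-M_n\in\xun\,(\Poly_{n-1}\otimes V)$ stands.

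The directness step, however, is circular as written. You invoke Theorem~\ref{thm:maxwellbasis} for the identity $\dim\Mono_k=\dim\Poly_k(\RR^{\dN-1})\cdot\dim V$, but look again at that proof: the dimension formula appears there as an \emph{input} (it is asserted without derivation in the final displayed line), not as an output, and in the standard development it is itself a corollary of the Fischer decomposition you are trying to establish. So the dimension count ultimately rests on the very statement being proved.

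A self-contained repair is to obtain directness by exhibiting the summands as eigenspaces. Using only the $\osp(1|2)$ relations~\eqref{eq:osprel}--\eqref{eq:osprel2} (exactly as in the proof of Lemma~\ref{lem:DDAonxunAn}, which is group-independent), one finds that $\eps\,\xun\,\DDop$ acts on $\xun^{n-k}M_k$, $M_k\in\Mono_k$, by the scalar $n-k$ when $n-k$ is even and by $n+k+\dN-1+2\gamma$ when $n-k$ is odd. These scalars are pairwise distinct for $k=0,\dots,n$ as soon as $\dN-1+2\gamma>0$, which the positivity hypothesis on $\kappa$ guarantees (and which is automatic for $\dN\ge 2$). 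Directness follows, and the dimension formula you wanted then becomes a consequence rather than an assumption.
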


From this proposition and the tower of CK extensions and Fischer decompositions (see for example~\cite[Eq.~(32)]{de_bie_z2n_2016}), we get a basis of the space of monogenics. 
\begin{proposition}[{\cite[Prop.~6]{de_bie_z2n_2016}}]\label{prop:basisCK}
	Let $\{s\}_{s\in \nu}$ be a basis of the spinor representation $V$. The set of functions defined for all $\fatj = (j_1,\dots,j_{\dN-1},0) \in \NN^\dN$ with $|\fatj|_1 =n$,
	\begin{equation}
		\Psi^{\fatj}_s(x_1,\dots, x_\dN) = \CK_{x_\dN}^{\kappa_\dN} \left( \xun_{\dN-1}^{j_{\dN-1}} \CK_{x_{\dN-1}}^{\kappa_{\dN-1}} \left( \dots \CK_{x_3}^{\kappa_3} \left( \xun_{2}^{j_2} \CK_{x_2}^{\kappa_2}(x_1^{j_1})\right)\dots \right)\right) s,
	\end{equation}
	is a basis of $\Mono_n(\RR^\dN;V)$.
\end{proposition}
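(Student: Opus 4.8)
The plan is to prove Proposition~\ref{prop:basisCK} by induction on the number of variables $\dN$, exploiting the tower structure built from the Cauchy--Kovalevskaya isomorphisms $\CK_{x_k}^{\kappa_k}$ and the Fischer decomposition. The base case $\dN=1$ is immediate: $\Mono_n(\RR^1;V)$ is either $x_1^n$ times a spinor (if $n$ has the right parity relative to the structure of $\Clif(1)$) or carries the obvious one-dimensional-per-spinor basis, and here the empty tower of CK maps simply returns $x_1^{j_1}s = x_1^n s$.

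For the inductive step, assume the statement holds for $\dN-1$ variables. First I would invoke the Fischer decomposition $\Poly_n(\RR^{\dN-1})\otimes V = \bigoplus_{k=0}^n \xunA{\dN-1}^{\,n-k}\Mono_k(\RR^{\dN-1};V)$ to decompose an arbitrary spinor-valued polynomial in the first $\dN-1$ variables. Then I would apply the CK isomorphism $\CK_{x_\dN}^{\kappa_\dN}\colon \Poly_n(\RR^{\dN-1})\otimes V \to \Mono_n(\RR^\dN;V)$: since it is a linear isomorphism, it carries any basis of the source to a basis of the target. Thus it suffices to exhibit a basis of $\Poly_n(\RR^{\dN-1})\otimes V$ of the form $\xunA{\dN-1}^{\,n-k}\Psi$, where $\Psi$ runs over a basis of $\Mono_k(\RR^{\dN-1};V)$ as $k$ ranges from $0$ to $n$. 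By the induction hypothesis, each $\Mono_k(\RR^{\dN-1};V)$ has the basis $\{\Psi_s^{\fatj'}\}$ with $\fatj' = (j_1,\dots,j_{\dN-2},0)$ and $|\fatj'|_1 = k$; multiplying by $\xunA{\dN-1}^{\,n-k}$ and then applying $\CK_{x_\dN}^{\kappa_\dN}$ produces exactly the functions $\Psi_s^{\fatj}$ with $\fatj = (j_1,\dots,j_{\dN-1},0)$, $j_{\dN-1} = n-k$, $|\fatj|_1 = n$. Ranging over all $k$ sweeps out all such multi-indices, so the collection $\{\Psi_s^{\fatj}\}$ is a basis.

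The one point requiring care is the bookkeeping: I must check that the index set produced by the recursion is precisely $\{\fatj = (j_1,\dots,j_{\dN-1},0) : |\fatj|_1 = n\}$, with no repetitions and no omissions. This follows because the decomposition $n = (n-k) + k$ with $k = |\fatj'|_1$ is a bijection between the data $(k,\fatj')$ appearing in the tower and the final multi-index $\fatj$ obtained by appending $n-k$ in slot $\dN-1$ and a $0$ in slot $\dN$ (the last slot is always $0$ because the outermost map is $\CK_{x_\dN}^{\kappa_\dN}$, whose source lives in $\RR^{\dN-1}$, and the spinor $s$ is attached only at the very end). I expect this combinatorial verification, rather than any analytic estimate, to be the main (mild) obstacle; the genuine analytic content---that $\CK_{x_k}^{\kappa_k}$ is an isomorphism and that the Fischer decomposition holds for positive $\kappa$---is already supplied by the quoted propositions. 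One may also remark, as the paper does, that a dimension count gives an independent sanity check: $\dim\Mono_n(\RR^\dN;V) = \dim\Poly_n(\RR^{\dN-1})\otimes V = \binom{n+\dN-2}{\dN-2}\dim V$, which matches the number of multi-indices $\fatj$ of the prescribed shape.
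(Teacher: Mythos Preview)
Your approach is exactly what the paper indicates: the proposition is quoted from \cite{de_bie_z2n_2016} without a detailed proof, and the paper's one-line justification is precisely ``From this proposition [the Fischer decomposition] and the tower of CK extensions and Fischer decompositions \dots we get a basis,'' which is your inductive scheme. One small correction: your base case $\dN=1$ is misdescribed --- for $\dN=1$ the multi-index $\fatj=(j_1,\dots,j_{\dN-1},0)$ has \emph{no} entries before the final $0$, so only $n=0$ arises and $\Psi^{(0)}_s=s$; the clean starting point is $\dN=2$, where $\Psi^{(j_1,0)}_s=\CK_{x_2}^{\kappa_2}(x_1^{j_1})s$ and the claim follows immediately from the CK isomorphism applied to the obvious basis $\{x_1^n s\}_{s\in\nu}$ of $\Poly_n(\RR^1)\otimes V$.
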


\subsection{A new basis}
We will use the partial  generalised symmetries of Subsection~\ref{ssec:Zredu} to make full use of the completely reducible nature of $\ZZ^\dN_2$. The crucial point of the abelian case $W=\ZZ_2^\dN$ is a chain of inclusions
\begin{equation}
	\ZZ_2 \subset \ZZ_2^2 \subset \dots \subset \ZZ_2^{\dN-1} \subset \ZZ_2^\dN.
\end{equation}
This gives in turn a tower of $\osp(1|2)$ algebra realisations given by the pairs $(\DDopA{k},\xunA{k})$ for each $1\leq k\leq \dN$. This feature of the group $\ZZ_2^\dN$ was used in Proposition~\ref{prop:basisCK} to give a basis. We give a basis proportional to the CK basis by replacing the operators $\zz_j$ in Theorem~\ref{thm:maxwellbasis} by the partial ones $\zzA{j}$. This is done by linking the Cauchy--Kovalevskaya extension of each level to one partial generalised symmetries.

An important note, the commutation of the $\zzA{j}$ requires that they stay on the same level. Indeed, two partial generalised symmetries at a different level in the tower do not commute in general. So in the basis of the following proposition, the order of application matters.

\begin{proposition}\label{prop:partialzbasis}
	The set of polynomials of the form
	\begin{equation}
		\Phi^{\fatj}_{s} := \zzA{\dN}^{j_{\dN-1}} \zzA{\dN-1}^{j_{\dN-2}} \dots \zzA{2}^{j_1} s, \quad \text{for  }\fatj= (j_1,\dots j_{\dN-1},0)\in \NN^{\dN},\ |\fatj|_1 =n, \text{ and } s\in\nu
	\end{equation}
	constitutes a basis of $\Mono_{n}(\RR^{\dN};V)$.
\end{proposition}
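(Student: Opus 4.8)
The plan is to mimic the proof of Theorem~\ref{thm:maxwellbasis} but carried out one level at a time along the chain $\ZZ_2\subset\dots\subset\ZZ_2^\dN$, using at level $k$ the partial generalised symmetries $\zA{j}{k}$ attached to the pair $(\DDopA{k},\xunA{k})$. First I would record that, by the remark following Definition~\ref{def:partialz}, the operators $\zA{j}{k}$ for $1\le j\le k$ enjoy the exact analogues of Propositions~\ref{prop:gensym}, \ref{prop:commZ} and \ref{prop:zkelv} inside the sub-$\osp(1|2)$ realisation: in particular $\DDopA{k}\zA{j}{k}=\zA{j}{k}\DDopA{k}$ on $\ker\DDopA{k}$, the $\zA{j}{k}$ (for fixed $k$) pairwise commute, and $\zA{j}{k}=-\KelvIA\Dun{j}\KelvIA$ on polynomials in the first $k$ variables (with $\KelvIA$ the Dunkl--Clifford--Kelvin transform in the variables $x_1,\dots,x_k$). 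From the level-$k$ analogue of Lemma~\ref{lem:vectorzz} one also gets the level-$k$ relation $\sum_{j=1}^{k}\zzpoly^{\eta+\xi_j}_{e_j\cdot s}=0$ whenever the exponent word lives in the first $k$ variables.

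The core step is to match one application of the Cauchy--Kovalevskaya operator with the partial generalised symmetries. Concretely I would prove that for a monogenic $M\in\Mono_n(\RR^{k-1};V)$ one has, up to an explicit nonzero scalar depending on $n,k,\kappa_k$,
\begin{equation*}
\CK_{x_k}^{\kappa_k}\bigl(\xunA{k-1}^{\,m}\,M\bigr)\ \propto\ \zA{k}{k}^{\,m}\,\CK_{x_k}^{\kappa_k}(M),
\end{equation*}
or, more to the point, that $\CK_{x_k}^{\kappa_k}$ intertwines multiplication by $x_{k-1}$ (really by $\xunA{k-1}$ against a spinor) with the operator $\zA{k-1}{k}$ on $\Mono_n(\RR^{k};V)$. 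This is the $\ZZ_2^\dN$-incarnation of Proposition~\ref{prop:zandproj}: since $\CK_{x_k}^{\kappa_k}$ is, up to scalars on each $\Poly_n(\RR^{k-1})\otimes V$, the extremal projector $\proj_{\Mono(\RR^k)}^{\Poly(\RR^k)\otimes V}$ precomposed with the inclusion, and Proposition~\ref{prop:zproj} says $\zA{j}{k}=2\eps\HA{k}\circ\proj_{\Mono}\circ x_j$ on $\Mono(\RR^k)$, the claimed intertwining follows by comparing these two descriptions and tracking the Pochhammer factors coming from \eqref{eq:CK} and \eqref{eq:projpi}. Feeding this in inductively, the CK-tower expression $\Psi^{\fatj}_s$ of Proposition~\ref{prop:basisCK} becomes, level by level, proportional to $\zA{\dN}{\dN}^{j_{\dN-1}}\cdots\zA{2}{2}^{j_1}s=\Phi^{\fatj}_s$, with a nonzero overall constant; since $\{\Psi^{\fatj}_s\}$ is a basis by Proposition~\ref{prop:basisCK}, so is $\{\Phi^{\fatj}_s\}$.

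As an alternative (or as a cross-check) I would give a self-contained argument paralleling Theorem~\ref{thm:maxwellbasis}: each $\zA{k}{k}$ is a generalised symmetry of $\DDopA{k}$, hence preserves $\ker\DDop$ when applied to something already monogenic for the larger operator, so every $\Phi^{\fatj}_s$ lies in $\Mono_n(\RR^\dN;V)$; spanning follows because applying the level-$k$ relations $\sum_{j\le k}\zzpoly^{\eta+\xi_j}_{e_j\cdot s}=0$ lets one eliminate, at each level $k$ from $\dN$ down to $2$, all occurrences of $\zA{k}{k}$ with a companion index $\ge$ the current one, leaving exactly the ``staircase'' words $\zA{\dN}{\dN}^{j_{\dN-1}}\cdots\zA{2}{2}^{j_1}$; and a dimension count $\prod_{k=2}^{\dN}(\text{number of }j_{k-1})=\dim\Poly_n(\RR^{\dN-1})\cdot\dim V=\dim\Mono_n(\RR^\dN;V)$ promotes the spanning set to a basis.

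The main obstacle is the non-commutativity emphasised in the text: $\zA{j}{k}$ and $\zA{j'}{k'}$ do not commute across levels, so the reduction arguments must be performed strictly in the order dictated by the tower, and the bookkeeping of which monomials survive the level-$k$ relations (\eqref{eq:rellindep} read inside $\ZZ_2^k$) has to be done carefully so that exactly $\dim V$ polynomials are removed at each of the $\dN-1$ stages. Establishing the precise nonzero proportionality constant in the CK-intertwining identity—keeping the Pochhammer symbols $(\kappa_k+1/2)_a$ from \eqref{eq:CK} aligned with the $(n-j+\dN/2+\gamma)_j$-type factors from \eqref{eq:projpi} restricted to the first $k$ variables—is the one genuinely computational point, but it is routine once Propositions~\ref{prop:zproj} and~\ref{prop:zandproj} are invoked in the sub-realisation.
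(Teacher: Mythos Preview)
Your overall strategy coincides with the paper's: one shows that each $\Phi^{\fatj}_s$ is a nonzero multiple of the corresponding CK basis element $\Psi^{\fatj}_{s'}$ (for a suitably modified spinor $s'$), and then invokes Proposition~\ref{prop:basisCK}. The key technical ingredient is exactly the intertwining you display, $\zzA{k}^{\,m}\,M \propto \CK_{x_k}^{\kappa_k}(\xunA{k-1}^{\,m}\,e_k^{\,m}M)$ for $M\in\Mono_n(\RR^{k-1};V)$, which is the content of Propositions~\ref{prop:z2propCK} and~\ref{prop:zkpropCK} in the paper.

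However, your proposed proof of this intertwining has a genuine gap. You assert that ``$\CK_{x_k}^{\kappa_k}$ is, up to scalars on each $\Poly_n(\RR^{k-1})\otimes V$, the extremal projector $\proj_{\Mono(\RR^k)}^{\Poly(\RR^k)\otimes V}$ precomposed with the inclusion,'' and then invoke Proposition~\ref{prop:zproj}. That assertion is false for $k\geq 3$. For instance, with $k=3$ and $p=x_1 s\in\Poly_1(\RR^2)\otimes V$, one computes directly that $R_3\bigl(\proj_{\Mono_1(\RR^3)}(x_1 s)\bigr)$ contains a nonzero $x_2 e_2e_1 s$ term and is therefore not a scalar multiple of $x_1 s$; hence $\proj\circ\iota$ and $\CK_{x_3}^{\kappa_3}$ are not proportional. (For $k=2$ the claim is vacuously true since $\Poly_n(\RR^1)$ is one-dimensional, which may be why it looked plausible.) What \emph{is} true, and what is actually needed, is the much more specific statement that $\proj_{\Mono(\RR^k)}(x_k M)\propto \CK_{x_k}^{\kappa_k}(\xunA{k-1}e_k M)$ for $M$ monogenic in the first $k-1$ variables; but establishing this requires essentially the same computation the paper performs.

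The paper's method for the intertwining is direct and avoids any comparison with the projector: apply the inverse $R_k$ (restriction to $x_k=0$) to $\zzA{k}\,\CK_{x_k}^{\kappa_k}(\xunA{k-1}^m f)$ and compute the surviving terms explicitly from~\eqref{eq:lem:zotherversions3} and~\eqref{eq:CK}, obtaining a nonzero constant times $\xunA{k-1}^{m+1}e_k f$ (Lemmas~\ref{lem:stepinduction} and~\ref{lem:inductionstepgen}); induction on $m$ then gives the full statement. Your alternative ``self-contained'' approach is too schematic to stand as a proof: the spanning step would require reducing arbitrary monogenics to the staircase words $\zzA{\dN}^{j_{\dN-1}}\cdots\zzA{2}^{j_1}s$, but the level-$k$ relations~\eqref{eq:rellindep} live inside the $\zA{j}{k}$ with fixed $k$, and since these do not commute across levels it is not clear how to set up the elimination you describe without already knowing the CK correspondence.
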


 The proof of this proposition will follow from Proposition~\ref{prop:maxwellandck}, as it exhibits a change of basis from the $\Psi_s^{\fatj}$ to the $\Phi^{\fatj}_s$. The remaining of the section is dedicated to proving this change of basis.
 
  Propositions~\ref{prop:z2propCK} and~\ref{prop:zkpropCK} show that, as operators, $\zzA{k}^j$ and $\CK_{x_k}^{\kappa_k} \xunA{k-1}^j$ are Clifford proportional, meaning that they differ only by a Clifford number. There is a small difference between $k=2$ and $k>2$ and thus we separate the proof in two steps. We begin by showing what will constitute the hard part of the induction proof of Proposition~\ref{prop:z2propCK}.

\begin{lemma}\label{lem:stepinduction}
	The $\zzA 2$ operator and the $\CK_{x_2}^{\kappa_2}$ extension are linked by
	\begin{equation}\label{eq:lem:stepinduction}
		\zzA{2} \CK_{x_2}^{\kappa_2} (x_1^m s) = A_m \CK_{x_2}^{\kappa_2} (x_1^{m+1}e_2e_1 s),
	\end{equation}
	with
	\begin{equation}
		A_m = 1+m+(1-(-1)^m)\kappa_1 + 2\kappa_2.
	\end{equation}
\end{lemma}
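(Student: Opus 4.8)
The plan is to prove \eqref{eq:lem:stepinduction} by induction on $m$, using the explicit formula \eqref{eq:CK} for $\CK_{x_2}^{\kappa_2}$ together with the expression \eqref{eq:partialz} for $\zM{2}$ specialised to the abelian group $\ZZ_2^\dN$ with $M=2$, restricted to the first two coordinates. Concretely, I would write out $\zM{2} = 2\varepsilon x_2 \HA{2} - \xunA{2}\Dun{2}\xunA{2}$ where $\HA{2} = \EulerA{2} + 1 + \kappa_1 + \kappa_2$ and $\xunA{2} = x_1 e_1 + x_2 e_2$, and compute the right-hand side directly. Since $\Dun{1}$, $\Dun{2}$ act on the two-variable polynomial $\CK_{x_2}^{\kappa_2}(x_1^m s)$ which by construction lies in $\Mono_2(\RR^2;V)$, i.e. $\DDopA{2}\CK_{x_2}^{\kappa_2}(x_1^m s) = 0$, I can exploit that $\zM{2}$ acts on a monogenic and hence, via the analogue of Proposition~\ref{prop:zproj}, equals $2\varepsilon\HA{2}\circ\proj_{\Mono_2}^{\Poly_2\otimes V}\circ\, x_2$ on $\Mono_2(\RR^2;V)$; this may shortcut the computation since $x_2\CK_{x_2}^{\kappa_2}(x_1^m s)$ has a simple monogenic projection.

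Alternatively — and this is probably cleaner — I would avoid the induction by using Proposition~\ref{prop:zkelv} in its partial form, so that $\zM{2} = -\KelvIA{2}\Dun{2}\KelvIA{2}$ on two-variable polynomials, where $\KelvIA{2}$ is the Dunkl--Clifford--Kelvin transform in the variables $x_1,x_2$. Then $\zM{2}\CK_{x_2}^{\kappa_2}(x_1^m s)$ becomes $-\KelvIA{2}\Dun{2}\KelvIA{2}\CK_{x_2}^{\kappa_2}(x_1^m s)$, and I would need to understand how $\KelvIA{2}$ interacts with the CK extension. Since the CK extension produces a monogenic determined by its restriction $x_1^m s$ to $x_2=0$, and since $\KelvIA{2}$ preserves monogenicity, $\KelvIA{2}\CK_{x_2}^{\kappa_2}(x_1^m s)$ is again (up to normalisation) a CK extension — of $\xunA{2}$ applied to something, by \eqref{eq:CK} being paired with the inversion. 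I would then track degrees and Clifford factors carefully to pin down the constant $A_m$.

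The key computational steps, whichever route I take, are: (i) expand $\xunA{2}\Dun{2}\xunA{2} = (x_1e_1+x_2e_2)\Dun{2}(x_1e_1+x_2e_2)$ using $e_ie_j+e_je_i = 2\varepsilon\delta_{ij}$ and the commutator $\comm{\Dun{2}}{x_2} = 1+2\kappa_2$, $\comm{\Dun{2}}{x_1}=0$ in the abelian case; (ii) apply the result to the explicit two-term sum \eqref{eq:CK}, using $\DDopA{1}^{2a}x_1^m = \Dun{1}^{2a}x_1^m$ and the identity $\Dun{1}x_1^m = (m + (1-(-1)^m)\kappa_1)x_1^{m-1}$ which produces exactly the $m+(1-(-1)^m)\kappa_1$ piece of $A_m$; (iii) reindex the resulting sums so that everything collects into $\CK_{x_2}^{\kappa_2}(x_1^{m+1}e_2e_1 s)$, checking that the Pochhammer denominators $(\kappa_2+1/2)_a$ shift correctly. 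The $+2\kappa_2$ in $A_m$ should emerge from the $\comm{\Dun{2}}{x_2}$ term acting through the $e_2$-component of $\xunA{2}$, and the $1$ from a degree/Euler-operator contribution.

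The main obstacle I anticipate is the bookkeeping in step (iii): matching the two sums coming from the even part and the $e_2x_2\DDopA{1}$-part of \eqref{eq:CK} after $\zM{2}$ has mixed them, and verifying that the $e_2e_1$ Clifford factor is produced uniformly (in particular that no stray term with a different Clifford grade survives). A secondary subtlety is handling the parity-dependent coefficient $(1-(-1)^m)\kappa_1$, which forces the base cases $m=0$ and $m=1$ of the induction to be checked separately and means the inductive step must carry the parity through consistently; I expect the cleanest way is to treat $\Dun{1}x_1^m$ as a single known scalar and never split into cases until the very end.
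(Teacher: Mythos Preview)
Your proposal is not wrong in principle --- a direct expansion of $\zzA{2}$ applied to the full series~\eqref{eq:CK}, followed by reindexing, would eventually produce the result --- but it misses the key simplification that the paper exploits. The paper never touches the full CK expansion. Instead it uses that $\CK_{x_2}^{\kappa_2}$ is an \emph{isomorphism} with inverse $R_2$ given by evaluation at $x_2=0$. Since $\zzA{2}$ is a generalised symmetry of $\DDopA{2}$, the left-hand side of~\eqref{eq:lem:stepinduction} already lies in $\Mono(\RR^2;V)$; hence to prove the identity it suffices to apply $R_2$ to both sides and compare. On the right this gives $A_m x_1^{m+1}e_2e_1 s$ immediately. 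On the left, one writes $\zzA{2}$ via~\eqref{eq:lem:zotherversions3} and observes that setting $x_2=0$ kills the $2\varepsilon x_2\HA{2}$ term and reduces $\xunA{2}$ to $x_1e_1$ and $|x|^2_{[2]}$ to $x_1^2$; only two surviving terms need to be computed, and your identity $\Dun{1}x_1^m s=(m+(1-(-1)^m)\kappa_1)x_1^{m-1}s$ finishes the job in three lines.

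So the difference is structural: your route keeps the infinite (truncated) CK sum and tries to match it against another such sum, which is where all the bookkeeping you anticipate in step~(iii) lives; the paper's route collapses both sides to polynomials in $x_1$ alone by passing through $R_2$. Your alternative suggestions (the projection formulation of Proposition~\ref{prop:zproj}, or the partial Kelvin transform) are also viable but likewise more work than necessary here --- they would reproduce the monogenic on the left, but you would still need to identify its CK preimage, and the cleanest way to do that is again to evaluate at $x_2=0$. Note also that no induction on $m$ is needed for this lemma; the induction in the paper appears only at the level of Proposition~\ref{prop:z2propCK}, where the present lemma serves as the inductive step.
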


\begin{proof}
	Recall that $R_2$ is the inverse of $\CK_{x_2}^{\kappa_2}$. Acting with $R_2$ on~\eqref{eq:lem:stepinduction} thus yields
		\begin{equation}\label{eq:lhsoflem}
			R_2 \zzA{2} \CK_{x_2}^{\kappa_2}(x_1^m s) = A_m x_1^{m+1}e_2e_1s,
		\end{equation}
		so it suffices to compute the left-hand side of~\eqref{eq:lhsoflem}. Begin by using the expression~\eqref{eq:lem:zotherversions3} of $\zzA{2}$
	\begin{align*}
		R_2 (\zzA{2} \CK_{x_2}^{\kappa_2}( x_1^m  s)) &= R_2( (2\varepsilon x_2 (\EulerA 2 +1 + \gamma_2) - \xun_2(1+2\kappa_2\sigma_2)e_2 -\varepsilon \xsq_2 \Dun{2})\CK_{x_2}^{\kappa_2}( x_1^m  s)).
		\intertext{Since $R_2$ sends $x_2$ to $0$, this reduces to} 
		&= (-x_1e_1(1+2\kappa_2\sigma_2)e_2)x_1^m s +\varepsilon^2 x_1^2 \Dun{2}\frac{x_2e_2 \Dun{1}e_1}{2}\frac{x_1^m s}{(1/2+\kappa_2)}\\
		&= ((1+2\kappa_2\sigma_2)e_2e_1)x_1^{m+1} s + x_1^2 (1+2\kappa_2\sigma_2) \frac{\Dun{1}x_1^m e_2e_1 s}{(1+2\kappa_2)},
		\intertext{and now we apply $\Dun{1}x_1^m s = (m + \kappa_1(1-(-1)^m))x_1^{m-1}  s$, since $\Dun{1} s =0$, to obtain}
		&= (1+ m + 2\kappa_2  +(1-(-1)^m)\kappa_1) x_1^{m+1}e_2e_1  s. \qedhere
	\end{align*}
\end{proof}
Using this lemma, we can prove the general proposition.
\begin{proposition}\label{prop:z2propCK}
	Acting on a spinor $ s$, we have
	\begin{equation}
		\zzA2^j s = a_{2}^j \CK_{x_2}^{\kappa_2}(x_1^j (e_2e_1)^j s),
	\end{equation}
	with
	\begin{equation}
		a_{2}^j:=2^j(\kappa_2+1/2)_{\lfloor (j+1)/2\rfloor} (\gamma_2+1)_{\lfloor j/2\rfloor}.
	\end{equation}
\end{proposition}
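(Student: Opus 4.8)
The statement is an identity of operators applied to a spinor $s$ (with $\EulerM s=0$, $\sigma_\alpha s=s$, $\DDopM s=0$ for the relevant reflections), so the natural approach is induction on $j$ using Lemma~\ref{lem:stepinduction} as the engine. The base case $j=0$ is trivial since $a_2^0=1$ and $\CK_{x_2}^{\kappa_2}$ fixes constants. For the inductive step I would write $\zzA{2}^{j+1}s = \zzA{2}\bigl(\zzA{2}^j s\bigr)$, substitute the inductive hypothesis $\zzA{2}^j s = a_2^j\,\CK_{x_2}^{\kappa_2}x_1^j(e_2e_1)^j s$, and then apply Lemma~\ref{lem:stepinduction} with the spinor $(e_2e_1)^j s$ in place of $s$. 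The lemma gives $\zzA{2}\CK_{x_2}^{\kappa_2}x_1^j(e_2e_1)^j s = A_j\,\CK_{x_2}^{\kappa_2}x_1^{j+1}e_2e_1(e_2e_1)^j s = A_j\,\CK_{x_2}^{\kappa_2}x_1^{j+1}(e_2e_1)^{j+1}s$, so that $\zzA{2}^{j+1}s = a_2^j A_j\,\CK_{x_2}^{\kappa_2}x_1^{j+1}(e_2e_1)^{j+1}s$, and it remains only to check $a_2^{j+1} = a_2^j A_j$.

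\textbf{The recursion for the coefficient.} With $A_m = 1+m+(1-(-1)^m)\kappa_1+2\kappa_2$, note that $A_m = m+1+2\kappa_2$ when $m$ is even and $A_m = m+1+2\kappa_1+2\kappa_2$ when $m$ is odd. One then checks that multiplying $a_2^j = 2^j(\kappa_2+1/2)_{\lfloor(j+1)/2\rfloor}(\gamma_2+1)_{\lfloor j/2\rfloor}$ by $A_j$ reproduces $a_2^{j+1}$: when $j$ is even, passing from $j$ to $j+1$ increments $\lfloor(j+1)/2\rfloor$ by one while $\lfloor j/2\rfloor$ is unchanged, and the new Pochhammer factor contributes $\kappa_2+1/2+\lfloor(j+1)/2\rfloor = \kappa_2+1/2+j/2$, so that $2(\kappa_2+1/2+j/2) = j+1+2\kappa_2 = A_j$; when $j$ is odd, $\lfloor j/2\rfloor$ increments by one while $\lfloor(j+1)/2\rfloor$ is unchanged, and one needs $\gamma_2+1+\lfloor j/2\rfloor = \gamma_2+1+(j-1)/2$, so that $2(\gamma_2+1+(j-1)/2) = j+1+2\gamma_2 = j+1+2\kappa_1+2\kappa_2 = A_j$ using $\gamma_2 = \kappa_1$ in the rank-one subsystem $S=\{x_1\}$. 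Both cases confirm $a_2^{j+1}=a_2^j A_j$, closing the induction.

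\textbf{Where the difficulty lies.} The genuinely nontrivial content is already packaged in Lemma~\ref{lem:stepinduction}, so the proof of this proposition is essentially bookkeeping: a clean induction together with a case split on the parity of $j$ to match the two branches of $A_j$ against the two Pochhammer symbols in $a_2^j$. The only points requiring care are the identification $\gamma_2 = \kappa_1$ (since $S$ is the rank-one system on the first coordinate, with a single positive root), the fact that $(e_2e_1)^j s$ is still a spinor annihilated by the relevant operators so that Lemma~\ref{lem:stepinduction} applies verbatim with $s$ replaced by $(e_2e_1)^j s$, and keeping track of the floor functions across the parity change. I would present the argument as: state the induction, dispatch the base case in one line, apply the inductive hypothesis and Lemma~\ref{lem:stepinduction}, and then verify $a_2^{j+1}=a_2^j A_j$ by the parity case analysis above.
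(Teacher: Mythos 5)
Your proof is correct and takes essentially the same route as the paper: induction on $j$ with Lemma~\ref{lem:stepinduction} as the engine, closing the loop via the coefficient recursion $a_2^{j+1} = a_2^j A_j$ (which the paper simply asserts and you verify explicitly by parity). One slip to flag in your remarks: in the paper's notation $\gamma_2$ means $\gamma_{[2]} = \sum_{\alpha\in S^+}\kappa(\alpha)$ with $S$ the rank-\emph{two} subsystem on the first two coordinates, so $\gamma_2 = \kappa_1+\kappa_2$, not $\kappa_1$ as you state; your own arithmetic ($j+1+2\gamma_2 = j+1+2\kappa_1+2\kappa_2$) in fact uses the correct value, so the parenthetical ``using $\gamma_2 = \kappa_1$ in the rank-one subsystem $S=\{x_1\}$'' is a mislabel rather than a gap, but it should be corrected since the odd-parity case of the coefficient check hinges on this identification.
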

\begin{proof}
	We proceed by induction on $j$, the case $j=1$ being covered by Lemma~\ref{lem:stepinduction} with $m=0$. Assume the induction hypothesis holds up to $j=m$. Now we consider the $(m+1)$th step and apply the induction hypothesis
	\begin{align}
		\zzA 2^{m+1} s &= \zzA 2\zzA 2^m s = \zzA 2 a_2^m\CK_{x_2}^{\kappa_2}( x_1^m (e_2e_1)^m  s),
	\end{align}
	then we apply Lemma~\ref{lem:stepinduction} to get $\zzA 2^{m+1} s = a_2^{m+1}\CK_{x_2}^{\kappa_2}( x_1^{m+1} (e_2e_1)^{m+1} s)$  since $A_ma_2^m = a_{2}^{m+1}$.
\end{proof}

In general, for $k>2$, there is one additional difficulty: the CK map includes not only Dunkl derivatives, but also partial Dunkl--Dirac operators. We will thus need a small lemma. 
\begin{lemma}[{\cite[Lem.~13]{de_bie_z2n_2016}}]\label{lem:DDAonxunAn}
	Let $f\in \Mono_n(\RR^k;V)$. The action of $\DDopA{k}$ on $\xunA{k}^mf$ is given by
	\begin{equation}
		\DDopA{k}(\xunA{k}^m f) = \eps(m+\frac{(1-(-1)^m)}{2} (2n + k-1 + 2\gamma_k))\xunA{k}^{m-1}f.
	\end{equation}
\end{lemma}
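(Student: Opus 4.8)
The statement to be proved is Lemma~\ref{lem:DDAonxunAn}: for $f\in\Mono_n(\RR^k;V)$,
\[
\DDopA{k}(\xunA{k}^m f) = \eps\Bigl(m+\tfrac{1-(-1)^m}{2}(2n+k-1+2\gamma_k)\Bigr)\xunA{k}^{m-1}f.
\]
Since this is cited from~\cite[Lem.~13]{de_bie_z2n_2016}, a short self-contained argument suffices. The plan is to induct on $m$, using only the $\osp(1|2)$ relations for the pair $(\DDopA{k},\xunA{k})$, namely the partial versions of~\eqref{eq:osprel} and~\eqref{eq:osprel2}, together with $\DDopA{k}f=0$ and $\HA{k}f=(n+k/2+\gamma_k)f$ (the latter because $f$ is homogeneous of degree $n$ in the first $k$ variables, so $\EulerA{k}f=nf$).

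\textbf{Key steps.} First I would record the base cases. For $m=0$ the claim is $\DDopA{k}f=0$, which holds by hypothesis, and the prefactor $\eps(0+0)=0$ matches. For $m=1$ the claim is $\DDopA{k}(\xunA{k}f)=\eps(2n+k-1+2\gamma_k)f$; this follows from $\acomm{\DDopA{k}}{\xunA{k}}=2\eps\HA{k}$, so $\DDopA{k}\xunA{k}f = 2\eps\HA{k}f - \xunA{k}\DDopA{k}f = 2\eps(n+k/2+\gamma_k)f$, and $2(n+k/2+\gamma_k)=2n+k-1+2\gamma_k+1$; one checks the odd-$m$ coefficient in the lemma indeed reproduces this. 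Next, for the inductive step, I would write $\xunA{k}^m = \xunA{k}\cdot\xunA{k}^{m-1}$ and compute
\[
\DDopA{k}(\xunA{k}^m f) = \acomm{\DDopA{k}}{\xunA{k}}\xunA{k}^{m-1}f - \xunA{k}\DDopA{k}(\xunA{k}^{m-1}f).
\]
The second term is handled by the induction hypothesis at level $m-1$. For the first term I use $\acomm{\DDopA{k}}{\xunA{k}}=2\eps\HA{k}$ and the fact that $\HA{k}$ acts as a scalar on $\xunA{k}^{m-1}f$: since $\xunA{k}^{m-1}f$ is homogeneous of degree $n+m-1$ in the first $k$ variables, $\HA{k}\xunA{k}^{m-1}f = (n+m-1+k/2+\gamma_k)\xunA{k}^{m-1}f$. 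Combining the two contributions gives a scalar times $\xunA{k}^{m-1}f$, and the remaining task is the elementary check that
\[
2\eps(n+m-1+k/2+\gamma_k) - \eps\Bigl(m-1+\tfrac{1-(-1)^{m-1}}{2}(2n+k-1+2\gamma_k)\Bigr)
\]
equals $\eps\bigl(m+\tfrac{1-(-1)^m}{2}(2n+k-1+2\gamma_k)\bigr)$. This is a short parity bookkeeping: when $m$ is even the bracket $(1-(-1)^m)/2$ vanishes and $(1-(-1)^{m-1})/2=1$ does the cancellation; when $m$ is odd the roles swap.

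\textbf{Main obstacle.} There is no serious obstacle: the only thing to be careful about is keeping the $\HA{k}$-eigenvalue bookkeeping consistent, in particular distinguishing the degree of $f$ (which is $n$) from the degree of $\xunA{k}^{m-1}f$ (which is $n+m-1$), and tracking the alternating term through the parity of $m$ versus $m-1$. Everything else is a direct consequence of the partial $\osp(1|2)$ relations already available for $(\DDopA{k},\xunA{k})$, exactly as in the full-rank case.
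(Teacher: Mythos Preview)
Your proposal is correct and follows essentially the same approach as the paper: induction on $m$ using the partial $\osp(1|2)$ anticommutation relation $\acomm{\DDopA{k}}{\xunA{k}}=2\eps\HA{k}$ together with $\DDopA{k}f=0$ and the $\HA{k}$-eigenvalue on homogeneous polynomials. The paper's one-line proof lists exactly these ingredients (plus the commutator $\comm{\DDopA{k}}{\xsq_k}$, which is not needed in your formulation since you step by one power of $\xunA{k}$ at a time); your write-up just spells out the bookkeeping in more detail.
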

\begin{proof}
This follows by induction from the relations $\comm{\DDopA{k}}{\xsq_k} = 2\xsq_k$, $\acomm{\DDopA{k}}{\xunA{k}}= 2\varepsilon(\EulerA k+k/2+\gamma_k)$, $\DDopA{k}f =0$ and $\EulerA k f = nf$.
\end{proof}

Now to prove the relation between the partial generalised symmetry and the CK map for the other levels of the tower, we introduce a lemma that takes care of the difficult induction step.

\begin{lemma}\label{lem:inductionstepgen}
	Let $f\in\Mono_{n}(\RR^{k-1};V)$ be a monogenic of degree $n$ in the first $k-1$ variables. Then
	\begin{equation}\label{eq:zkCKxmf}
		\zzA{k} \CK_{x_k}^{\kappa_k} (\xunA{k-1}^m f) = B_{k,n}^m \CK_{x_k}^{\kappa_k} (\xunA{k-1}^{m+1}e_kf),
	\end{equation}
	with
	\begin{equation}
		B_{k,n}^m =  (-1)^{m+1}(m +1+ \frac{(1-(-1)^m)}{2} (2n+k-2 + 2\gamma_{k-1})+2\kappa_k ). 
	\end{equation}
	
\end{lemma}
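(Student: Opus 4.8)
The plan is to mimic the proof of Lemma~\ref{lem:stepinduction}, but now in the $k$-th level of the tower. Since $\CK_{x_k}^{\kappa_k}$ is invertible with inverse $R_k$ (evaluation of the last variable at $0$), it suffices to apply $R_k$ to~\eqref{eq:zkCKxmf} and show
\begin{equation*}
R_k\bigl(\zzA{k}\, \CK_{x_k}^{\kappa_k}\, \xunA{k-1}^m f\bigr) = B_{k,n}^m\, \xunA{k-1}^{m+1} e_k f.
\end{equation*}
To compute the left-hand side I would write $\zzA{k}$ in the form~\eqref{eq:lem:zotherversions3} adapted to the level-$k$ realisation, namely $\zzA{k} = 2\varepsilon x_k(\EulerA{k}+k/2+\gamma_k) - \xunA{k}(e_k + 2\varepsilon \sym{k}^{[k]}) - \varepsilon\xsq_{[k]}\DDopA{k}$, where $\sym{k}^{[k]}$ collects only the reflections in the first $k$ coordinates. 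Now apply $R_k$: any term carrying an explicit factor $x_k$ (from $x_k$, from $\xunA{k} = \xunA{k-1} + x_k e_k$ except the surviving $\xunA{k-1}$ piece, and from $\xsq_{[k]} = \xsq_{[k-1]} + x_k^2$ except the surviving $\xsq_{[k-1]}$) will partially vanish under $R_k$; the surviving contributions come from the $x_k^2\DDopA{k}$ term hitting the lowest-order $x_k$ in the CK expansion, and from the $\xunA{k-1}$ part of $\xunA{k}$ acting directly. This mirrors exactly the two surviving terms in the proof of Lemma~\ref{lem:stepinduction}.

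The next step is to evaluate those two surviving pieces. For the first piece one needs $R_k$ of $\xsq_{[k]}\DDopA{k}\, \CK_{x_k}^{\kappa_k}(\xunA{k-1}^m f)$; expanding $\CK_{x_k}^{\kappa_k}$ by~\eqref{eq:CK}, only the $a=0$ term of the second sum (the one with the explicit $e_k x_k \DDopA{k-1}/2$ prefactor, hence a single $x_k$) survives $R_k$ after the $\DDopA{k}$ lowers that $x_k$. This produces a term proportional to $\DDopA{k-1}\bigl(\xunA{k-1}^m f\bigr)$, which Lemma~\ref{lem:DDAonxunAn} evaluates as $\eps\bigl(m + \tfrac{1-(-1)^m}{2}(2n+k-2+2\gamma_{k-1})\bigr)\xunA{k-1}^{m-1}f$, and then the factor $\xsq_{[k-1]} = \xunA{k-1}^2/\eps$ (restored after $R_k$ kills the $x_k^2$ part) raises the power back to $m+1$. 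For the second piece, $R_k$ of $\xunA{k}(e_k + 2\varepsilon\sym{k}^{[k]})\,\CK_{x_k}^{\kappa_k}(\xunA{k-1}^m f)$ reduces to $\xunA{k-1}(e_k + 2\varepsilon\sym{k}^{[k]})\,\xunA{k-1}^m f$; here $\sym{k}^{[k]}$ picks up only the reflections $\sigma_{\alpha}$ of the $k$-th coordinate, which on the abelian side is $\kappa_k \sigma_k$, giving the $2\kappa_k$ contribution (note $\sigma_k$ fixes $f$ and $\xunA{k-1}$, and the sign flips through $\xunA{k-1}$ as needed). Collecting both contributions and simplifying the coefficient should yield exactly $B_{k,n}^m$.

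The main obstacle I anticipate is bookkeeping the action of the reflection part $\sym{k}^{[k]}$ and the signs $(-1)^m$ correctly through the odd vector variable $\xunA{k-1}$: one must track how $\sigma_k$ (trivially) and the parity of $m$ (through $e_k\xunA{k-1}^m = (-1)^m\xunA{k-1}^m e_k + \text{lower}$, using $\acomm{e_k}{\xunA{k-1}}=0$ since $\xunA{k-1}$ lives in the first $k-1$ Clifford generators) interact, which is precisely what produces the $(-1)^{m+1}$ and the $\tfrac{1-(-1)^m}{2}$ in $B_{k,n}^m$. Everything else is a routine expansion of~\eqref{eq:CK} under $R_k$ combined with Lemma~\ref{lem:DDAonxunAn}, exactly paralleling Lemma~\ref{lem:stepinduction}. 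One should also double-check that $\xunA{k-1}$, $\EulerA{k-1}$ and $\DDopA{k-1}$ all commute appropriately with the surviving $\xsq_{[k-1]}$ factor, which follows from the level-$(k-1)$ $\osp(1|2)$ relations.
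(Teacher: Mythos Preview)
Your plan is correct and is essentially the paper's own proof: apply $R_k$, write $\zzA{k}$ via~\eqref{eq:lem:zotherversions3}, keep the two terms that survive $x_k\to 0$, and evaluate the $\DDopA{k-1}\xunA{k-1}^m f$ piece with Lemma~\ref{lem:DDAonxunAn}. One slip to fix: in your displayed expression for $\zzA{k}$ the last term must be $-\varepsilon\,\xsq_{[k]}\,\Dun{k}$ (the single Dunkl operator in the $k$-th direction, as in~\eqref{eq:lem:zotherversions3}), not $-\varepsilon\,\xsq_{[k]}\,\DDopA{k}$; taken literally, the $\DDopA{k-1}$ part of $\DDopA{k}$ does not touch $x_k$ and would produce an extra term $\xsq_{[k-1]}\DDopA{k-1}\xunA{k-1}^m f$ after $R_k$, spoiling the computation. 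With $\Dun{k}$ in place the rest of your bookkeeping (the $(-1)^{m+1}$ from $e_k\xunA{k-1}^m=(-1)^m\xunA{k-1}^m e_k$, the $2\kappa_k$ from $\sym{k}^{[k]}=\varepsilon\kappa_k e_k\sigma_k$, and the $\tfrac{1-(-1)^m}{2}(2n+k-2+2\gamma_{k-1})$ from Lemma~\ref{lem:DDAonxunAn}) matches the paper line for line. Your closing remark about $\DDopA{k-1}$ commuting with $\xsq_{[k-1]}$ is not needed (and not true); the only commutation used is $\Dun{k}$ past the $\xsq_{[k-1]}$ prefactor, which holds since they involve disjoint variables.
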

\begin{proof}
	The proof proceeds in the same fashion as the one of Lemma~\ref{lem:stepinduction}, using $\EulerA {k-1} f = n f$, $\DDop_{k-1} f = 0$ and $\Dun{k}f=0$ instead of $\Euler s= 0$, $\Dun{1}  s =0$ and $\Dun{2}  s =0$ in the corresponding steps.
	
	The map $\CK_{x_k}^{\kappa_k}$ has an inverse $R_k$ defined as evaluating $x_k$ to $0$. We compute, using~\eqref{eq:CK},
	\begin{align*}
		R_k(\zzA{k} \CK_{x_k}^{\kappa_k} (\xunA{k-1}^mf)) &= R_k(2\varepsilon x_k (\EulerA k + k/2 + \gamma_k) - \xunA{k}(1+2\kappa_k\sigma_k)e_k - \eps \xsq_k \Dun{k})\CK_{x_k}^{\kappa_k}(\xunA{k-1}^m f)\\
		&= -\xunA{k-1}(1+2\kappa_k\sigma_k)e_k\xunA{k-1}^mf + \eps^2 \xsq_{k-1} \Dun{k} \frac{e_kx_k\DDopA{k-1}}{2(\kappa_k +1/2)}\xunA{k-1}^nf\\
		&=(-1)^{m+1}(1+2\kappa_k)\xunA{k-1}^{m+1} e_k f + \xsq_{k-1} e_k \frac{\comm{\Dun{k}}{x_k}}{2(\kappa_k+1/2)} \DDopA{k-1}\xunA{k-1}^nf
		\intertext{and we use Lemma~\ref{lem:DDAonxunAn} for $f\in\Mono_n(\RR^{k-1};V)$ on the rightmost term to get}
		&= (-1)^{m+1}(1+2\kappa_k)\xunA{k-1}^{m+1} e_k f \\
		&\quad+ 
		\eps \Big(m + \frac{(1-(-1)^m)}{2} (2n+k-2 + 2\gamma_{k-1})\Big) \xsq_{k-1} e_k \xunA{k-1}^{m-1}f \\
		&= (-1)^{m+1}(1+2\kappa_k+ \eps^2 (m + \frac{(1-(-1)^m)}{2} (2n+k-2 + 2\gamma_{k-1})))   \xunA{k-1}^{m+1}e_kf.
	\end{align*}
	This allows one to determine the constant $B_{k,n}^m$ by comparing with~\eqref{eq:zkCKxmf}.
\end{proof}
\begin{proposition}\label{prop:zkpropCK}
	Let $f\in\Mono_{n}(\RR^{k-1};V)$ be a monogenic in $k-1$ variables of degree $n$. For $k>2$,
	\begin{equation}
		\zzA{k}^j f = b_{k,n}^j \CK_{x_k}^{\kappa_k} (\xunA{k-1}^j e_k^jf),
	\end{equation}
	with 
	\begin{equation}
		b_{k,n}^j=(-1)^{\lfloor (j+1)/2\rfloor} 2^j (\kappa_k+1/2)_{\lfloor  (j+1)/2\rfloor} (\gamma_{k} + n + k/2)_{\lfloor j/2\rfloor}.
	\end{equation}
\end{proposition}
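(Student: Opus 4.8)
The plan is to prove Proposition~\ref{prop:zkpropCK} by induction on $j$, exactly mirroring the structure of the proof of Proposition~\ref{prop:z2propCK}, but now carrying the degree bookkeeping through the monogenic $f$ rather than the spinor $s$. The base case $j=1$ is precisely Lemma~\ref{lem:inductionstepgen} with $m=0$, which gives $\zzA{k} f = B_{k,n}^0 \CK_{x_k}^{\kappa_k}\xunA{k-1} e_k f$ with $B_{k,n}^0 = -(1+2\kappa_k)$; this matches $b_{k,n}^1 = -2(\kappa_k+1/2)_1 = -(1+2\kappa_k)$ since $\lfloor 2/2\rfloor = 1$ and $\lfloor 1/2 \rfloor = 0$.

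For the inductive step, assume the claim for $j=m$. First I would write $\zzA{k}^{m+1} f = \zzA{k}(\zzA{k}^m f) = b_{k,n}^m\, \zzA{k}\,\CK_{x_k}^{\kappa_k}\xunA{k-1}^m e_k^m f$. The key observation is that $e_k^m f$ is still a monogenic of degree $n$ in the first $k-1$ variables: $e_k$ commutes with $\DDopA{k-1}$ (it is a Clifford element not appearing in the first $k-1$ Dunkl--Dirac operators) and with $\EulerA{k-1}$, so $e_k^m f \in \Mono_n(\RR^{k-1};V)$. Hence Lemma~\ref{lem:inductionstepgen} applies with $f$ replaced by $e_k^m f$, giving $\zzA{k}\,\CK_{x_k}^{\kappa_k}\xunA{k-1}^m e_k^m f = B_{k,n}^m\, \CK_{x_k}^{\kappa_k}\xunA{k-1}^{m+1} e_k^{m+1} f$. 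Therefore $\zzA{k}^{m+1}f = B_{k,n}^m b_{k,n}^m\,\CK_{x_k}^{\kappa_k}\xunA{k-1}^{m+1}e_k^{m+1}f$, and it remains to verify the recursion $B_{k,n}^m\, b_{k,n}^m = b_{k,n}^{m+1}$.

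The one genuinely computational point is this last Pochhammer identity, which splits into the parity of $m$. When $m$ is even, $B_{k,n}^m = (-1)^{m+1}(m+1+2\kappa_k) = -(m+1+2\kappa_k)$, and passing from $m$ to $m+1$ bumps $\lfloor(j+1)/2\rfloor$ by one (from $m/2+1$... wait, $\lfloor(m+1)/2\rfloor = m/2$ and $\lfloor(m+2)/2\rfloor = m/2+1$) so one picks up a factor $2(\kappa_k+1/2 + m/2) = m+1+2\kappa_k$ together with one more factor $-1$ from $(-1)^{\lfloor(j+1)/2\rfloor}$ and a factor $2$; the $\lfloor j/2\rfloor$ floor is unchanged. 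When $m$ is odd, $B_{k,n}^m = (-1)^{m+1}(m+1+(2n+k-2+2\gamma_{k-1})+2\kappa_k) = m+1+2n+k-2+2\gamma_{k-1}+2\kappa_k$; here one checks, using $2\gamma_{k-1}+2\kappa_k = 2\gamma_k$ (since $\gamma_k = \gamma_{k-1}+\kappa_k$ in the abelian tower, as the root system added at level $k$ contributes the single parameter $\kappa_k$), that this equals $2(\gamma_k+n+k/2 + (m-1)/2)$, which is exactly the factor picked up in $(\gamma_k+n+k/2)_{\lfloor j/2\rfloor}$ when $\lfloor j/2\rfloor$ increases from $(m-1)/2$ to $(m+1)/2$; meanwhile $\lfloor(j+1)/2\rfloor$ and the sign $(-1)^{\lfloor(j+1)/2\rfloor}$ are unchanged, and we still collect the factor $2$. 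In both parities $B_{k,n}^m b_{k,n}^m = b_{k,n}^{m+1}$, completing the induction. The only mild subtlety — and the thing worth stating explicitly — is that $e_k^m f$ remains in $\Mono_n(\RR^{k-1};V)$, so that Lemma~\ref{lem:inductionstepgen} can be reapplied at the correct degree; everything else is routine Pochhammer arithmetic that can be relegated to a one-line parenthetical remark.
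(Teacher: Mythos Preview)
Your proof is correct and follows exactly the same inductive strategy as the paper's own proof: base case from Lemma~\ref{lem:inductionstepgen} with $m=0$, inductive step by reapplying that lemma, and the identity $B_{k,n}^m b_{k,n}^m = b_{k,n}^{m+1}$. You simply spell out two points the paper leaves implicit --- that $e_k^m f$ remains in $\Mono_n(\RR^{k-1};V)$ so the lemma applies, and the parity-split verification of the Pochhammer recursion (using $\gamma_k=\gamma_{k-1}+\kappa_k$) --- both of which are correct and add clarity.
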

\begin{proof}
	We proceed by induction on $j$. The base case follows from Lemma~\ref{lem:inductionstepgen} with $m=0$.
	Assume the induction hypothesis holds up to $j$. The induction step follows from the induction hypothesis and Lemma~\ref{lem:inductionstepgen}
	\begin{align}
		\zzA k^{j+1}f &= \zzA k\zzA k^jf = \zzA k b_{k,n}^j\CK_{x_k}^{\kappa_k} (\xun_{k-1}^j e_k^j f) = B_{k,n}^j b_{k,n}^j\CK_{x_k}^{\kappa_k}(\xun_{k-1}^{j+1} e_k^{j+1}f).
	\end{align}
	This shows the result since $B_{k,n}^jb_{k,n}^j = b_{k,n}^{j+1}$.
\end{proof}

Connecting this to the CK basis, we get the following correspondence,  proving Proposition~\ref{prop:partialzbasis}.

\begin{proposition}\label{prop:maxwellandck}
	Let $\fatj = (j_1,\dots, j_{\dN-1},0) \in \NN^{\dN}$ with $|\fatj|_1 = n$ and $s$ be a spinor. The partial generalised symmetry basis is linked to the CK basis by
	\begin{equation}
		\Phi^{\fatj}_s = c_{\fatj}\Psi^{\fatj}_{\fatj\cdot s},
	\end{equation}
	where the action $\fatj\cdot s$ in $\Psi^{\fatj}_{\fatj\cdot s}$ denotes the action $e_{\dN}^{j_{\dN-1}} \dots e_{3}^{j_2} (e_2e_1)^{j_1}s$ on the spinor space in the expression of the polynomial $\Psi$ and where the proportionality constant is given by
	\begin{equation}
		\begin{aligned}
			c_{\fatj}&= \left(\prod_{k=3}^{\dN-1}\prod_{l=2}^{k-1} (-1)^{j_kj_l}\right)2^n(1/2+\kappa_2)_{\lfloor(j_1+1)/2\rfloor}(1+\gamma_2)_{\lfloor j_1/2\rfloor} \times \\ & \quad \prod_{i=2}^{\dN-1}((-1)^{\lfloor (j_i+1)/2\rfloor} (1/2+\kappa_{i+1})_{\lfloor(j_i+1)/2\rfloor} ((i+1)/2  +\gamma_{i+1}+ \sum_{k=1}^{i-1}j_{k})_{\lfloor j_i/2\rfloor}).
		\end{aligned}
	\end{equation}
\end{proposition}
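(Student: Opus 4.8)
The plan is to prove Proposition~\ref{prop:maxwellandck} by unwinding both sides of the claimed identity layer by layer, working from the innermost Cauchy--Kovalevskaya extension outward, and collecting the Clifford proportionality constants supplied by Propositions~\ref{prop:z2propCK} and~\ref{prop:zkpropCK}. The key observation is that $\Psi^{\fatj}_s$ is built by the nested composition $\CK_{x_\dN}^{\kappa_\dN}(\xun_{\dN-1}^{j_{\dN-1}}\CK_{x_{\dN-1}}^{\kappa_{\dN-1}}(\cdots \CK_{x_2}^{\kappa_2}(x_1^{j_1})\cdots))\,s$, while $\Phi^{\fatj}_s$ is the nested application $\zzA{\dN}^{j_{\dN-1}}\cdots\zzA{2}^{j_1}s$. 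Propositions~\ref{prop:z2propCK} and~\ref{prop:zkpropCK} say precisely that, on the appropriate monogenic space, each $\zzA{k}^{j}$ equals a scalar times $\CK_{x_k}^{\kappa_k}\circ(\xun_{k-1}^{j}\,\cdot\,)\circ e_k^{j}$; so the two expressions agree up to a Clifford factor and a product of scalars, and the whole task is bookkeeping.

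First I would set up an induction on the number of levels $\dN$, or equivalently process the tower $\ZZ_2\subset\ZZ_2^2\subset\cdots\subset\ZZ_2^\dN$ from the bottom. The base of the recursion is Proposition~\ref{prop:z2propCK}: $\zzA{2}^{j_1}s = a_2^{j_1}\CK_{x_2}^{\kappa_2}x_1^{j_1}(e_2e_1)^{j_1}s$, which is exactly $a_2^{j_1}$ times the $k=2$ layer of $\Psi$ with the spinor $s$ replaced by $(e_2e_1)^{j_1}s$, and $a_2^{j_1} = 2^{j_1}(\kappa_2+1/2)_{\lfloor(j_1+1)/2\rfloor}(\gamma_2+1)_{\lfloor j_1/2\rfloor}$, matching the first scalar block of $c_{\fatj}$. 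For the inductive step, suppose we have already shown that $\zzA{k}^{j_{k-1}}\cdots\zzA{2}^{j_1}s$ equals a known scalar times $\Psi$ restricted to the first $k$ variables, with the spinor modified by the Clifford element $e_k^{j_{k-1}}\cdots e_3^{j_2}(e_2e_1)^{j_1}$. That restricted-$\Psi$ is a monogenic in the first $k$ variables of a known degree, so it is a legitimate argument for Proposition~\ref{prop:zkpropCK} at level $k+1$, and applying $\zzA{k+1}^{j_k}$ produces $b_{k+1,n_k}^{j_k}$ times $\CK_{x_{k+1}}^{\kappa_{k+1}}\xunA{k}^{j_k}e_{k+1}^{j_k}$ of it, which is the next layer of $\Psi$. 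The degree parameter $n_k$ feeding into $b_{k+1,n_k}^{j_k}$ is $j_1+\cdots+j_{k-1}$ — this is why the factor $((i+1)/2+\gamma_{i+1}+\sum_{k=1}^{i-1}j_k)_{\lfloor j_i/2\rfloor}$ appears in $c_{\fatj}$ — so I would carefully track that the degree of the partial monogenic at level $i$ is $\sum_{k<i}j_k$.

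The two remaining pieces of the computation are the sign prefactor $\prod_{k=3}^{\dN-1}\prod_{l=2}^{k-1}(-1)^{j_kj_l}$ and the reconciliation of the spinor actions. The signs arise when one commutes the accumulated Clifford element $e_k^{j_{k-1}}\cdots(e_2e_1)^{j_1}$ past the newly introduced factor $e_{k+1}^{j_k}$: the vector variable $\xunA{k}$ in the CK formula involves $e_1,\dots,e_k$, and the element $e_{k+1}^{j_k}$ must be moved to the spinor slot through products of earlier $e_l$'s, each transposition of distinct anticommuting generators contributing a sign. Counting these yields exactly the double product over $l<k$. For the spinor action, I would note that $\fatj\cdot s$ is defined in the statement precisely as $e_\dN^{j_{\dN-1}}\cdots e_3^{j_2}(e_2e_1)^{j_1}s$, which is what the induction accumulates, so the two sides match by construction. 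A subtlety worth a sentence is that the CK extensions in $\Psi$ also act on the spinor part through their $e_k x_k \DDopA{k-1}$ term, but since $\DDopA{k-1}$ kills the relevant monogenic argument on the nose only at the end, whereas inside the tower it is iterated — this is already handled by Proposition~\ref{prop:zkpropCK}, so I would simply cite it rather than redo it.

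The main obstacle I anticipate is the sign bookkeeping: getting $\prod_{k=3}^{\dN-1}\prod_{l=2}^{k-1}(-1)^{j_kj_l}$ exactly right requires being scrupulous about the order in which Clifford generators are accumulated and about which generators $\xunA{k}$ contains at each level, and it is easy to be off by a global sign or to double-count a pair. I would manage this by fixing, once and for all, the convention that the accumulated Clifford element sits to the \emph{left} of the spinor and is built by left-multiplication as the induction proceeds, then explicitly commuting $e_{k+1}^{j_k}$ (introduced on the right of the new CK layer) leftward past $\xunA{k}^{j_k}$ and the previously accumulated element; the only nontrivial crossings are with the $e_l$, $2\le l\le k$, appearing in the already-accumulated word, giving $\sum_{l=2}^{k-1}j_kj_l$ sign contributions at step $k$ — note the $l$ range excludes the pair $(e_2e_1)$ acting as a unit since $e_{k+1}$ commutes past the even element $e_2e_1$ trivially, consistent with the product starting at $l=2$ only from $k\ge3$. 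Summing over $k$ from $3$ to $\dN-1$ gives the stated double product, and the scalar products $a_2^{j_1}$ and $\prod b_{k,n}^{j_k}$ assemble into the rest of $c_{\fatj}$, completing the proof.
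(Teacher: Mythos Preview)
Your approach is essentially the paper's: apply Proposition~\ref{prop:z2propCK} once for the innermost layer and Proposition~\ref{prop:zkpropCK} for each subsequent level, then assemble the scalar factors $a_2^{j_1}$ and $b_{k+1,\sum_{l<k}j_l}^{j_k}$ into $c_{\fatj}$ and push the stray Clifford elements onto the spinor.

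Your account of the sign bookkeeping is garbled, though. After applying the two propositions you have the nested expression
\[
\CK_{x_\dN}^{\kappa_\dN}\bigl(\xunA{\dN-1}^{j_{\dN-1}}\,e_\dN^{j_{\dN-1}}\,\CK_{x_{\dN-1}}^{\kappa_{\dN-1}}(\cdots e_3^{j_2}\,\CK_{x_2}^{\kappa_2}(x_1^{j_1}(e_2e_1)^{j_1}s)\cdots)\bigr),
\]
and each $e_m^{j_{m-1}}$ must be moved \emph{rightward} toward the spinor, not leftward. On the way it passes through the inner maps $\CK_{x_l}^{\kappa_l}$ (which commute with $e_m$ for $m>l$, as is visible from~\eqref{eq:CK}) and through the vector variables $\xunA{l}^{j_l}$ for $l=2,\dots,m-2$; it is these vector-variable crossings, each contributing $(-1)^{j_{m-1}j_l}$, that produce $\prod_{k=3}^{\dN-1}\prod_{l=2}^{k-1}(-1)^{j_kj_l}$. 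The $e_m$'s never need to cross one another, since their relative order in $\fatj\cdot s=e_\dN^{j_{\dN-1}}\cdots e_3^{j_2}(e_2e_1)^{j_1}s$ already matches the order in which they appear in the tower; and there is no crossing with $\xunA{k}^{j_k}$ at the step introducing $e_{k+1}^{j_k}$, because $e_{k+1}^{j_k}$ is introduced to its right. Your claim that the signs come from crossing the accumulated Clifford word happens to give the same total (the parities line up), but it is not the mechanism at work and would mislead you if you tried to generalise.
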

\begin{proof}
	The first steps are to apply once Proposition~\ref{prop:z2propCK} for the $\zzA2$ contribution and multiple times Proposition~\ref{prop:zkpropCK} for the remaining contributions of the $\zzA{k}$. This will give $c_{\fatj}$ up to the first sign. This sign is obtained when Clifford elements go to the right from their interaction with the vector variables. Note that $e_k$ commutes with $\CK_{x_l}^{\kappa_l}$ when $k>l$, as can be clearly seen from the expression~\eqref{eq:CK}, so the only sign to consider is from the vector variable crossing. Step by step, this gives
	\begin{align*}
		\Phi_{\fatj}^s &= \zzA{\dN}^{j_{\dN-1}} \zzA{\dN-1}^{j_{\dN-2}} \dots \zzA{2}^{j_1}  s\\
		\text{\scriptsize(Prop.~\ref{prop:z2propCK})}\quad &= a_2^{j_1}\zzA{\dN}^{j_{\dN-1}} \zzA{\dN-1}^{j_{\dN-2}} \dots \zzA{3}^{j_2} \CK_{x_2}^{\kappa_2}(x_1^{j_1}(e_2e_1)^{j_1} s)\\
		\text{\scriptsize(Prop.~\ref{prop:zkpropCK})}\quad &= a_2^{j_1} \prod_{k=2}^{\dN-1} (b_{k+1,\sum_{j=1}^{k-1}j_k}^{j_k}) \CK_{x_\dN}^{\kappa_\dN}(\xunA{\dN-1}^{j_\dN}e_\dN^{j_{\dN-1}}\CK_{x_{\dN-1}}^{\kappa_{\dN-1}}(\cdots e_3^{j_2}\CK_{x_2}^{\kappa_2}(x_1^{j_1}(e_2e_1)^{j_1}  s)) )\\
		&= \left(\prod_{k=3}^{\dN-1}\prod_{l=2}^{k-1} (-1)^{j_kj_l}\right)a_2^{j_1} \prod_{k=2}^{\dN-1} (b_{k+1,\sum_{j=1}^{k-1}j_k}^{j_k}) \CK_{x_\dN}^{\kappa_\dN}(\xunA{\dN-1}^{j_\dN}\dots \CK_{x_2}^{\kappa_2}(x_1^{j_1}e_\dN^{j_{\dN-1}}\dots (e_2e_1)^{j_1}  s) )\\
		&=c_{\fatj} \Psi_{\fatj}^{\fatj\cdot s}.\qedhere
	\end{align*}
\end{proof}

%
%

\section*{Acknowledgements}
We thank the anonymous referees for their comments. This project was supported in part by the EOS Research Project [grant number 30889451]. Moreover, ALR holds a scholarship from the Fonds de recherche du Qu\'ebec -- Nature et technologies [grant number 270527], and RO was supported by a postdoctoral fellowship, fundamental research, of the Research Foundation -- Flanders (FWO) [grant number 12Z9920N].

\begingroup 
\bibliographystyle{abbrv}
\bibliography{2021_DBLROVdJ_maxwell}
\endgroup

\end{document}